\numberwithin{equation}{section}
\numberwithin{figure}{section}
\theoremstyle{plain}
\newtheorem{thm}{Theorem}
  \theoremstyle{plain}
  \numberwithin{thm}{section}
  \newtheorem{cor}[thm]{Corollary}
  \theoremstyle{plain}
  \newtheorem{lem}[thm]{Lemma}
  \theoremstyle{remark}
  \newtheorem{rem}[thm]{Remark}
    \theoremstyle{remark}
  \newtheorem{example}[thm]{Example}
   \theoremstyle{plain}
  \newtheorem{prop}[thm]{Proposition}
  \newtheorem{definition}[thm]{Definition}
  \def\Ddots{\mathinner{\mkern1mu\raise\p@
\vbox{\kern7\p@\hbox{.}}\mkern2mu
\raise4\p@\hbox{.}\mkern2mu\raise7\p@\hbox{.}\mkern1mu}}
\newcounter{anhang}
\newcommand{\mklm}[1]{\left\{ #1 \right\}}
\newcommand{\eklm}[1]{\left\langle #1 \right\rangle}
\renewcommand{\d}{\,d}
\newcommand{\N}{{\mathbb N}}
\newcommand{\Z}{{\mathbb Z}}
\newcommand{\C}{{\mathbb C}}
\newcommand{\R}{{\mathbb R}}
\newcommand{\E}{{\mathcal E}}
\newcommand{\F}{{\mathcal F}}
\newcommand{\Fbb}{{\mathbb F}}
\newcommand{\J}{{ \mathrm J}}
\newcommand{\M}{{\mathscr M}}
\newcommand{\CP}{{\mathbb{C}\mathrm{P}}}
\renewcommand{\epsilon}{\varepsilon}
\renewcommand{\rho}{\varrho}
\newcommand{\bdm}{\begin{displaymath}}
\newcommand{\edm}{\end{displaymath}}
\newcommand{\bq}{\begin{equation}}
\newcommand{\eq}{\end{equation}}
\newcommand{\bqn}{\begin{equation*}}
\newcommand{\eqn}{\end{equation*}}
\newcommand{\G}{{\mathcal G}}
\newcommand{\SO}{\mathrm{SO}}
\newcommand{\g}{{\bf \mathfrak g}}
\newcommand{\U}{{\mathrm{U}}}
\newcommand{\eps}{\varepsilon}
\newcommand{\Ad}{\mathrm{Ad}\,}
\newcommand{\id}{\mathrm{id}\,}
\renewcommand{\Im}{\mathrm{Im}\,}
\newcommand{\pr}{\mathrm{pr}}
\newcommand{\BLG}{\mathrm{Bl}_{{G}}}
\newcommand{\BLS}{\mathrm{Bl}_{{S^1}}}
\DeclareMathOperator{\Car}{Car}
\DeclareMathOperator{\supp}{supp\,}
\DeclareMathOperator{\codim}{codim}
\DeclareMathOperator{\coker}{coker}
\begin{document}
\title[Singular cohomology of  symplectic quotients and Kirwan surjectivity ]{Singular cohomology of  symplectic quotients  by circle actions and Kirwan surjectivity  }
\author{Benjamin Delarue}
\address{Universit\"at Paderborn, Warburger Str. 100, 33098 Paderborn, Germany}
\email{bdelarue@math.uni-paderborn.de}
\author{Pablo Ramacher}\address{Fachbereich 12 Mathematik und Informatik, Philipps-Universit\"at Marburg, Hans--Meer\-wein-Str., 35032 Marburg, Germany}
\email{ramacher@mathematik.uni-marburg.de}
\author{Maximilian Schmitt}
\address{Max Planck Institute for the Study of Crime, Security and Law, Günterstalstraße 73, 79100 Freiburg, Germany}
\email{m.schmitt@csl.mpg.de}

\date{December 6, 2023}

\begin{abstract}
Let $M$ be a symplectic manifold carrying a Hamiltonian $S^1$-action with  momentum map $\J:M \rightarrow \R$ and consider   the corresponding symplectic quotient $\M_0:=\J^{-1}(0)/S^1$. We extend Sjamaar's complex of differential forms on $\M_0$, whose cohomology is isomorphic to the singular or \v Cech cohomology $H(\M_0;\R)$ of $\M_0$ with real coefficients,  to a complex of differential forms  on $\M_0$ associated with a partial desingularization $\widetilde \M_0$, which we call resolution differential forms. The cohomology of that complex  turns out to be isomorphic to the de Rham cohomology $H(\widetilde \M_0)$ of $\widetilde \M_0$. Based on this, we derive a long exact sequence involving both  $H(\M_0;\R)$ and $H(\widetilde \M_0)$ and give conditions for its  splitting. We then define a Kirwan map $\mathcal{K}:H_{S^1}(M) \rightarrow H(\widetilde \M_0)$ from the equivariant cohomology $H_{S^1}(M)$ of $M$ to $H(\widetilde \M_0)$ and show that its image contains the image of $H(\M_0;\R)$ in $H(\widetilde \M_0)$ under the natural inclusion. Combining both results in the case that all fixed point components of $M$ have vanishing odd cohomology we obtain a surjection $\check \kappa:H^\textrm{ev}_{S^1}(M) \rightarrow H^\textrm{ev}(\M_0;\R)$ in even degrees, while already simple examples show  that a similar surjection in odd degrees does not exist in general. As an interesting class of examples we study abelian polygon spaces. 
\end{abstract}

\maketitle

\tableofcontents{}

\section{Introduction}\label{sec:1}

Let a compact Lie group $G$ act on a compact connected symplectic manifold $(M,\sigma)$ in a Hamiltonian fashion with equivariant momentum map $\J \colon M \rightarrow \mathfrak{g}^{*}$. Consider the associated \emph{symplectic quotient}  or \emph{reduced space}, which is given by the connected topological space
\[
\M_0:=\J^{-1}(0)/G.
\]
 If $0 \in \mathfrak{g}^{*}$ is a regular value of the momentum map, $M_{0}$ is a symplectic orbifold. But if $0$ is a singular value,  the symplectic quotient can have more serious singularities and is a stratified space whose strata are symplectic manifolds  induced by the stratification of $\J^{-1}(0)$ by orbit types. In spite of the fact that the  geometry and topology  of $\M_0$  have been extensively studied in the last decades by Kirwan, Lerman, Sjamaar, Witten, Kalkman, Tolman, Jeffrey, Kiem, Woolf, and others \cite{kirwan84,lerman-sjamaar,witten92,kalkman95,lerman-tolman00,JKKW03}, the cohomology of $\M_0$ as a stratified space  is still not fully understood. In particular,  its ordinary cohomology $H^\ast(\M_0;\R)$, that is, its singular or \v Cech cohomology with coefficients in $\R$, in the following also simply denoted by $H^\ast(\M_0)$, and its relation with the intersection cohomology $IH^\ast (\M_0)$ with middle perversity is still obscure. In this paper, we   propose a new approach to the cohomology theory  of $\M_0$ for circle actions  which encompasses both the singular and the intersection cohomology of $\M_0$, and sheds new light on  $H^\ast(\M_0;\R)$. % and its interrelationship with $IH^\ast (\M_0)$. % \Prmk{Hier stellt sich die Frage, wie stark wir die Beziehung zur Schnittkohomologie betonen wollen. Eventuell muesste man dann noch etwas detaillierter werden...}
  
To discuss the background of our approach, recall that one of the main tools for the study of the cohomology of $\M_0$  in the case when $0$ is a regular value of the momentum map is the \emph{regular Kirwan map} from the equivariant cohomology   $H^\ast_G(M)$ of $M$ to $H^{*}(\M_{0};\R)$, which is defined by
\bq\label{eq:28.03.2022}
\begin{tikzcd}
\kappa:H^{*}_{G}(M) \arrow{r}{\iota^{*}} &H^{*}_{G}(\J^{-1}(0)) \arrow{r}{\Car} &H^{*}_{\mathrm{bas} \, G}(\J^{-1}(0)) \arrow{r}{(\pi^{*})^{-1}} &H^{*}(\M_{0};\R),
\end{tikzcd}
\eq
where   $\iota: \J^{-1}(0) \rightarrow M$ and $\pi:\J^{-1}(0) \rightarrow \M_0$ denote the natural injection and projection, respectively,
and $\Car \colon H^{*}_{G}(\J^{-1}(0)) \rightarrow H^{*}_{\mathrm{bas} \, G}(\J^{-1}(0))$ is the Cartan isomorphism between equivariant and basic cohomology of the zero level of the momentum map.
The map $\kappa$ constitutes a surjective ring homomorphism, see \cite{kirwan95}, \cite[Theorem 1]{bott-tolman-weitsman2004}, \cite[Proposition 3.iii)]{JKKW03}, \cite[Section 5.4]{kirwan84}, and by characterizing its kernel, the cohomology of $\M_0$ can be described in terms of the equivariant cohomology of $M$. 
 If $0$ is not a regular value of $\J$,   $\iota^{*}$ still induces a surjection in the Borel model of equivariant cohomology, but a natural surjection from $H^\ast_G(M)$ onto $H^\ast (\M_0;\R)$ is not known so far.  To tackle these difficulties, partial desingularizations of $\M_0$ were introduced  in \cite{kirwan85} in the algebraic case and in \cite[Section 4.1.2]{meinrenken-sjamaar} for Hamiltonian actions in general. To define such desingularizations, one carries out a series of equivariant symplectic blow-ups in order to eliminate points with isotropy in the zero level of the momentum map. In this way one obtains a Hamiltonian manifold $\widetilde{M}$ on which $G$ acts such that $0$ is a regular value of a momentum map $\tilde{\J}$. The symplectic quotient $\widetilde{\M_{0}}:=\widetilde{\J}^{-1}(0)/G$ is called the \emph{partial desingularization of $\M_{0}$} and can be used to define the \emph{intersection Kirwan map} 
\bq
\label{eq:18.08.2023}
\kappa_{IH} \colon H^{*}_{G}(M) \longrightarrow H^{*}_{G}(\widetilde{M}) \longrightarrow H^{*}(\widetilde{\M_{0}}) \longrightarrow IH^{*}(\M_{0}),
\eq
where the first map is induced by the series of blow-ups, the second map is the usual regular Kirwan map and the third map is a certain projection. In the context of  nonsingular, connected, complex, projective varieties and GIT quotients there is a canonical choice of this projection, since $IH^{*}(\M_{0})$ occurs as a summand in $ H^{*}(\widetilde{\M_{0}})$, 
and $\kappa_{IH}$ is surjective, see \cite[Theorem 2.5]{kirwan86}, \cite[Theorem A.1]{jeffrey-mare-woolf2005},  and \cite[p.~220]{JKKW03}.  In the symplectic setting, such a surjection has been established for circle actions in \cite{lerman-tolman00} by using small resolutions of $\M_0$.
But in the general symplectic setting the existence and surjectivity of $\kappa$  remains unknown as is remarked in \cite[p.~223]{kiem-woolf2006}.

 The departing point of our approach is Sjamaar's description \cite{sjamaar05} of the singular cohomology of $\M_0$ by means of differential forms on the stratified space $\M_0$.  To introduce the latter, consider the stratification of $\J^{-1}(0)$ by orbit types and denote the unique open dense  stratum by $\J^{-1}(0)^{\top}$. It is called the  \emph{top stratum}. The corresponding  unique open dense stratum in $\M_0$  given by $\M_0^{\top}:=\J^{-1}(0)^{\top}/G$ is also called the \emph{top stratum}. On the other hand, the \emph{maximally singular stratum} of $\M_0$ is given by the intersection of $\J^{-1}(0)$ with the set of fixed points $M^G$. Its components $F$ constitute submanifolds of possibly different dimensions, and we denote the set of these components by $\F$.  With 
$\iota_\top:\J^{-1}(0)^{\top} \rightarrow M$ and $\pi_\top:\J^{-1}(0)^{\top} \rightarrow \M_0^\top$  
being the natural  inclusion and projection, respectively, one defines the complex of \emph{differential forms on $\M_{0}$} as the subcomplex of differential forms on $\M_0^\top$ given by
\[
\Omega(\M_0):=\left\{ \omega_{0} \in \Omega(\M_0^{\top}) \mid \exists \eta \in \Omega(M) \colon \pi_{\top}^{*}\omega_{0}=\iota_{\top}^{*}\eta \right\}.
\]
The topological significance of this subcomplex stems from the fact that by \cite[Theorem 5.5]{sjamaar05}
\[
H^{*}(\Omega(\M_0),d) \simeq H^{*}(\M_0;\R),
\]
where $d$ is the usual exterior derivative.  

From now on, let $G=S^1\simeq \U(1)$ be the circle group and assume that the $S^1$-action on $\J^{-1}(0)$ is effective. This guarantees that the top stratum is disjoint from the other strata and that $S^1$ acts freely on it. 
%\FPrmk{Wie schon von Euch bemerkt, stellt sich hier die Frage, nach welchem Orbit-Typ wir stratifizieren wollen. Sjamaar benutzt den nicht-infinitesimalen Orbit-Typ, Lerman-Tolman den infinitesimalen. Sollen wir statt des infinitesimalen auch den nicht-infinitesimalen nehmen? Da wir uns hauptsaechlich auf Sjamaar beziehen, waere dies sicher gut und ich denke, nicht zu aufwendig.  Oder sollen wir  wie von Euch vorgeschlagen tatsaechlich annehmen, dass $G$ frei auf $\J^{-1}(0)^{\top}$ wirkt, um den Vergleich am einfachsten zu machen? Eine weitere Moeglichkeit waere, dass wir letzteres nur annehmen bei dem Vergleich unserer Ergebnisse mit denen von Lerman-Tolman, aber sonst nach Sjamaar stratifizieren.} 
We then  extend the complex $\Omega(\M_0)$  as follows. Consider an equivariant symplectic blow-up  $\beta \colon \BLS(M) \rightarrow M$ of $M$ along the set of fixed points $M^ {S^1}$ of the $ {S^1}$-action together with the  diagram
\bq\label{diagram}
\begin{tikzcd}
\BLS(M) \arrow{d}{\beta} &\beta^{-1}\left(\J^{-1}(0)^{\top} \right) \arrow[hook',swap]{l}{\iota'_{\top}} \arrow{d}{\beta_{\top}}  \arrow[bend left=50]{dd}{\pi_{\top}'}\\
M &\J^{-1}(0)^{\top} \arrow[hook']{l}{\iota_{\top}} \arrow{d}{\pi_{\top}}\\
  &\M_{0}^{\top},
\end{tikzcd}
\eq
where $\iota'_\top$ is the natural injection and $\beta_\top$ the natural restriction of $\beta$ to the preimage of $\J^{-1}(0)^\top$. The blow-up $\beta$ is a smooth map from the $ {S^1}$-manifold  $\BLS(M)$ onto $M$, and we define the complex of \emph{resolution differential forms} on $\M_{0}$  as
\[
\widetilde{\Omega}(\M_{0}):=\left\{ \omega_{0} \in \Omega(\M_{0}^{\top}) \mid \exists \tilde{\rho} \in \Omega(\BLS(M)) \colon (\pi_{\top}')^{*}\omega_{0}=(\iota'_{\top})^{*}\tilde{\rho} \right\}
\] 
with the differential $d$. Note that this complex contains $\Omega(\M_0)$ as a subcomplex. It turns out that  this complex is isomorphic to the complex $\Omega(\widetilde \M_{0})$ of differential forms on the  partial desingularization $\widetilde \M_0:=\widetilde{\J^{-1}(0)}/ {S^1}$, where $\widetilde{\J^{-1}(0)}:=\overline{\beta^{-1}(\J^{-1}(0)^{\top})}\subset \BLS(M)$ denotes the strict transform of  $J^{-1}(0)$, compare Proposition \ref{resolutionforms}. Relying on the isomorphism $H^{*}(\widetilde{\Omega}^{*}(\M_{0}),d)\simeq H^{*}(\widetilde\M_{0})$, we obtain in Theorem \ref{longexactsequence} as our first main result a long exact sequence of the form
\[
\ldots \longrightarrow H^{k}\left(\M_{0}\right) \longrightarrow H^{k}(\widetilde{\M}_{0}) \longrightarrow \left( \bigoplus\limits_{{F \in \mathcal{F}_{0}}} {H^{*}(F) \otimes\frac{\mathbb{R}[\widetilde{\sigma}_{0}|_{\widetilde F},\widetilde{\Xi}|_{\widetilde F}]_{\geq 1}}{I_F}} \right)_{k} \longrightarrow H^{k+1}(\widetilde{\M}_{0}) \longrightarrow \ldots,
\]
where  $\mathcal{F}_{0}:=\{ F \subset \J^{-1}(0) \cap M^{S^{1}}\}$ and $[\widetilde{\sigma}_{0}|_{\widetilde F}],[\widetilde \Xi|_{\widetilde F}]\in H^{*}(\widetilde{F})$ denote the restrictions of the symplectic class and the curvature class of $\widetilde{\M}_{0}$ to $\widetilde{F}:=(\beta_{0})^{-1}(F)$, respectively, $\beta_0: \widetilde \M_0 \rightarrow \M_0$ being the projection induced by $\beta$.  In the  special case that $\J^{-1}(0) \cap M^{S^{1}}$ consists only of isolated fixed points and $H^{2k+1}(\M_{0})=0$ for all $k$,  the map $
H^{\ast}(\M_{0})\to H^{\ast}(\widetilde{\M}_{0})
$ is injective so that the above sequence splits and we obtain  a (non-canonical) {isomorphism}
\[
H^{*}(\widetilde{\M}_{0})\cong H^{*}\left(\M_{0}\right) \oplus  \bigoplus\limits_{{F \in \mathcal{F}_{0}}} {\frac{\mathbb{R}[\widetilde{\sigma}_{0}|_{\widetilde F},\widetilde{\Xi}|_{\widetilde F}]_{\geq 1}}{I_F}},
\]
compare Corollary \ref{splitting}. More generally, if for all $k\in \N$ and each component $F \subset{\J^{-1}(0) \cap M^{S^{1}}}$  we have $H^{{2k+1}}(F)=0$, then the natural map
$
H^{2k}(\M_{0})\to H^{2k}(\widetilde{\M}_{0})
$ 
is injective, compare Corollary \ref{generalF}. Consequently,  the image  of $H^{*}(\M_{0};\R)$ in $H^{*}(\widetilde\M_{0})$ under the natural inclusion is an interesting space, even if in general it contains less information than the full singular cohomology $H^{*}(\M_{0};\R)$.  

Now, the major difficulty in defining a surjection  like \eqref{eq:28.03.2022} in the case when  the fixed point set $M^ {S^1}$ of the $ {S^1}$-action is not empty stems from the fact that the Cartan isomorphism involves exterior multiplication with a connection form $\Theta$ on $M$  and its curvature. 
Since any such connection form has  the property  that  $i_{\overline{X}}\Theta=X$, where  $\overline{X} \in \mathcal{X}(M)$ denotes  the fundamental vector field corresponding to an element $X \in \g$, the connection $\Theta$ necessarily must become singular when approaching fixed points of the $ {S^1}$-action, and is therefore only defined on $M \setminus M^ {S^1}$. Consequently, the Cartan map cannot give rise to elements in $H^{*}(\Omega(\M_0),d)$ because their pullbacks  along $\pi^\ast_\top$ must have extensions to the whole of $M$ and,  in particular, to $M^ {S^1}$. In contrast, on  the smooth $ {S^1}$-manifold $\BLS(M)$  the action is locally free on the strict transform $\widetilde{\J^{-1}(0)}$ of $\J^{-1}(0)$ and consequently multiplication with a connection form on $\widetilde{\J^{-1}(0)}$ is defined. In the search of  a generalization of the regular Kirwan map \eqref{eq:28.03.2022} we were therefore led to consider the  \emph{resolution Kirwan map}
\[
\mathcal{K} \colon H^{*}_{ {S^1}}(M) \overset{\beta^{*}}{\longrightarrow} H^{*}_{ {S^1}}(\BLS(M))\overset{\kappa}{\longrightarrow} H^{*}(\widetilde{\M}_{0})
\]
 as the composition of $\beta^{*} \colon H^{*}_{ {S^1}}(M) \rightarrow H^{*}_{ {S^1}}(\BLS(M))$ with the regular Kirwan map of the blow-up $\kappa \colon H^{*}_{ {S^1}}(\BLS(M)) \rightarrow H^{*}(\widetilde{\M_{0}})$.  As our second main result, we then show in Theorem \ref{surjectivity} that the image of  $\mathcal{K}$ contains the image of the natural inclusion $H^{*}(\M_{0};\R)\to  H^{*}(\widetilde\M_{0})$.  This means that the image of $\mathcal{K}$ can be rather large and Theorem \ref{surjectivity} can be regarded as a weak form of Kirwan surjectivity onto $H^{*}(\M_{0};\R)$  in case that $0$ is a singular value of the momentum map. As a special case, if all connected components $F \subset M^{S^{1}}\cap \J^{-1}(0)$ have vanishing odd cohomology, Corollary \ref{generalF} implies that there is  a surjective map 
\bqn
\check \kappa: H^{\mathrm{ev}}_{{S^1}}(M)\to H^{\mathrm{ev}}(\M_{0};\R),
\eqn
compare Corollary \ref{ex:even}; this is the case in the important situation when  $M^{S^1}\cap \J^{-1}(0)$ only consists of isolated fixed points. In case that the odd cohomology of $\M_{0}$ vanishes, we obtain a surjective linear map $ \check \kappa \colon H^{*}_{{S^1}}(M)\to H^{*}(\M_{0};\R)$. Nevertheless, as shown in Corollary \ref{cor:15.11.2023}, for  a  general Hamiltonian $S^1$-manifold $M$, there is no degree-preserving surjection $H^*_{{S^1}}(M)\to H^\ast (\M_{0};\R)$ in odd degrees. Thus, in general, Kirwan surjectivity onto singular cohomology can only hold in even degrees, contrasting with both intersection cohomology and the regular case. Along the same line, as noticed in Remark \ref{rem:18.08.2023}, the resolution Kirwan map $\mathcal{K}$ itself can not be surjective, either. Clearly, $\mathcal{K}$ differs from the map $\kappa_{IH}$ defined in \eqref{eq:18.08.2023} by the projection onto $IH^\ast(\M_0)$.
 
For the proof of Theorem \ref{surjectivity}, we rely again on the concept of resolution differential forms. More precisely, similarly to $\Omega(\M_0)$ and $\widetilde \Omega(\M_0)$ we define  the complex of differential forms  $\Omega^{*}(\J^{-1}(0))$ and the complex of resolution differential forms $\widetilde{\Omega}^{*}(\J^{-1}(0))$ on the zero level of the momentum map. %, see \eqref{eq:dgas}.  
Contrasting with $\Omega^{*}(\J^{-1}(0))$, the complex $\widetilde{\Omega}^{*}(\J^{-1}(0))$ is  invariant under multiplication with a connection form, which makes it a $W^\ast$-module and even a  $\mathfrak{g}$-differential graded algebra of type (C) in the sense of \cite[Definition 3.4.1 and Definition 2.3.4]{guillemin-sternberg99}. This is a crucial property because for such a $W^{*}$-module $A$, the map
$
A_{\mathrm{bas} \, \mathfrak{g}} \rightarrow C_{ {S^1}}(A), \, \omega \mapsto 1 \otimes \omega,
$ 
from the basic subcomplex $A_{\mathrm{bas} \, \mathfrak{g}}$  to the Cartan complex $C_ {S^1}(A)$ of $A$ turns out to induce an isomorphism in cohomology with homotopy inverse $\Car \colon C_{ {S^1}}(A) \rightarrow A_{\mathrm{bas} \, \mathfrak{g}}$. Based on this isomorphism we are then able to prove our second main result  in Theorem \ref{surjectivity}. 

As explained in Section \ref{sec:inters}, the complex $\widetilde{\Omega}(\M_{0})$ naturally extends the intersection resolution forms studied in \cite{lerman-tolman00}, which opens up the possibility  to relate $H^\ast(\M_0;\R)$ to   $IH^\ast (\M_0)$ via maps to $H^\ast(\widetilde \M_0)$. As one of the striking features of $H^\ast (\M_0;\R)$, 
note that  the canonical symplectic form $\sigma_0$ on $\M_0^\top$, characterized by the condition that $\pi_{\top}^{*}\sigma_{0}=\iota_{\top}^{*}\sigma$, together with its powers define non-trivial elements in $H^{*}(\Omega(\M_0),d)$, see \cite[Corollary 7.6]{sjamaar05}, but  do not represent cohomology classes in $IH^\ast(\M_0)$. Thus, from a symplectic point of view, the singular cohomology of $\M_0$, as  one of the standard cohomologies for general singular spaces, is more natural than the intersection cohomology of $\M_0$. 

We close our paper by discussing an interesting class of examples given by abelian polygon spaces in Section \ref{sec:17.08.2023}, which have been of interest in symplectic and algebraic geometry as well as in combinatorics for a long time, see \cite{hausmann-knutson1998}.  In particular, our results imply  that there is a linear surjection from equivariant cohomology onto the singular cohomology of abelian polygon spaces in even degrees, see Theorem \ref{thm:27.11.2023}.

In a forthcoming article, and based on the techniques developed in \cite{delarue-ioos-ramacher}, we intend to study the interrelationship of  singular cohomology and intersection cohomology for circle actions more closely within the framework of resolution cohomology via residue formulas. It would also be interesting to generalize our study to non-compact symplectic manifolds under the assumption that the momentum map is proper, see \cite[Remark 5.8]{meinrenken-sjamaar}. In a further study, we also intend to extend our work to general compact group actions. 

{\bf Acknowledgments.}  The first author has received funding from the Deutsche Forschungsgemeinschaft (German Research Foundation, DFG) through the Priority Program (SPP) 2026 ``Geometry at Infinity''. This work is part of the PhD thesis of the third author.

\section{Background and setup}\label{sec:prodcohoms2}
\label{sec:2}

\subsection{Hamiltonian $S^1$-manifolds and their symplectic quotients}\label{sec:sympl.quot} In what follows, we give a summary of the theory of Hamiltonian group actions, restricting ourself  to the case of circle actions, which will be the actions studied in this paper. Thus, let  $G:=S^1$ be the circle group, which we realize as the subgroup of complex numbers of modulus $1$, inducing an identification
\begin{equation}\label{identfla}
\begin{split}
\g\xrightarrow{\,\sim\,}\R, \qquad X \longmapsto x\,,
\end{split}
\end{equation}
in such a way that $X\in\g$ corresponds to $e^{2\pi i x}\in S^1\subset\C$.
This induces in turn an identification $\g^*\simeq\R$ of the dual of the Lie algebra with $\R$ and of the Lebesgue measure on the interval $[0,1]$ with the normalized Haar measure on $S^1$. Further, let $ M $ be a symplectic manifold of dimension $2n$ with symplectic form $\sigma$. Assume that $M$ carries a Hamiltonian action of the circle group ${S^1}$, and
denote the corresponding Kostant-Souriau momentum map  by
\bqn
\J: M \to \g^\ast,  \quad \J(p)(X){=:}J(X)(p),
\eqn
which is characterized by the property 
\bq
dJ(X) =i_{\overline{ X} } \sigma\qquad \forall\;X \in \g,\label{eq:sign24}
\eq
where $\overline{X}$ denotes the fundamental vector field on $M$ associated {with} $X$, $d$ is the de Rham differential, and $i$ denotes contraction.
 %\bqn
%(\widetilde X f)(p)= \frac d {dt} f(e^{-tX} \cdot p)|_{t=0}, \qquad X \in \g, \quad f \in \Cinft( M).
%\eqn
% By this choice, the mapping $X\mapsto \widetilde X$ becomes a Lie-algebra homomorphism, so that in particular $\widetilde{[X,Y]}=[\widetilde X, \widetilde Y]$.  
Note that $\J$ is ${S^1}$-equivariant in the sense that $\J(k^{-1} p) = \Ad^\ast(k) \J(p)$.   Consider the zero  level set $\J^{-1}(\{0\})$, in the following denoted by $\J^{-1}(0)$, and note that fibers of momentum maps are always connected, see e.g.\ \cite[Theorem 1]{atiyah82}. Consider further  the corresponding \emph{symplectic quotient}
\bqn 
\M_0:=\J^{-1}(0)/{S^1}=:M/\!\!/{S^1}. 
\eqn
If $0 \in \g^\ast$ is a  regular value of $\J$, the zero level set is a smooth manifold and $\M_0$ is an orbifold. But in general, they are singular topological spaces stratified by orbit types. More precisely, let $M_{(H)}$ denote the stratum of $M$ of orbit type $(H)$, where $H\subset G$ is a closed subgroup.  One has then the  orbit stratification
$$\J^{-1}(0)=\bigcup_{(H)} \, \, \J^{-1}(0) \cap M_{(H)},  $$
see  \cite{lerman-sjamaar}.  Similarly,  the quotient $\M_0$ is stratified by orbit types according to
\bq
\M_0=  \bigsqcup  \M_{0,(H)}, \qquad \M_{0,(H)}:=(\J^{-1}(0) \cap M_{(H)})/{S^1}.
\eq
On each stratum $\M_{0,(H)}$ there is a natural symplectic form   $\sigma_{(H)}$ characterized by the condition $(\iota_{(H)})^\ast \sigma=(\pi_{(H)})^\ast \sigma_{(H)}$, where $\iota_{(H)}:\J^{-1}(0) \cap M_{(H)}\to M$ is the inclusion map  and $\pi_{(H)}:\J^{-1}(0) \cap M_{(H)} \rightarrow  \M_{0,(H)}$ the orbit map. Both stratifications have a maximal element called the \emph{top stratum},  denoted by   $$\J^{-1}(0)^\top:=\J^{-1}(0) \cap M_{(\mklm{e})}\quad\text{ and }\quad \M_0^\top:=\M_{0,(\mklm{e})},$$ respectively.  Accordingly, we shall write $\iota_\top:=\iota_{(\mklm{e})}$, $\pi_\top:=\pi_{(\mklm{e})}$, and $\sigma_\top:=\sigma_{(\mklm{e})}$.

The top stratum $\M_0^\top$  is {a smooth manifold and dense in $\M_0$}. The so-called \emph{maximally singular stratum} $\M_{0,(S^1)}$ is also a smooth manifold, each of its {connected} components  being identical to some component $F$ of the set of fixed-points
\bqn 
M^{S^1}:=\mklm{p \in M: g \cdot p =p \quad \forall \,  g \in {S^1}}.
\eqn
 The components $F$  are submanifolds {of $M$} of possibly different dimensions, and we denote the set of these components by $\F$.   Furthermore, each $F$ is a symplectic submanifold of $M$,
%  It is not hard to see that the momentum map $\J:M \to \t^\ast$ has no critical points in $M_{(H_\mathrm{top})}$ and its singular values are  $\mklm{\J(F): F \in \F}$. 
so that  $TM|_{F}=TF\oplus \Sigma_F$, where $\Sigma_F\subset TM $ is  the symplectic normal bundle of $F$ in $M${, which  carries a symplectic structure}. In particular, the total space of $\Sigma_F$ {is} a symplectic manifold.  Furthermore, ${{S^1}}$ acts on $\Sigma_F$ fiberwise, and we may choose a ${S^1}$-invariant complex structure on $\Sigma_F$ compatible with the symplectic one. Each fiber of the so complexified bundle $\Sigma_F$ then splits into a direct sum of complex $1$-dimensional representations of {${S^1}$}, so that with $ \dim F=2n_F$
\bq
\Sigma_F=\bigoplus_{j=1}^{n-n_F}\E^F_j, \label{eq:splitting111}
\eq
the $\E^F_j$ being complex line bundles over $F$.  The Lie algebra   ${\g}$ acts on them by
\bqn 
(\E_j^F)_p \ni v \mapsto i \lambda^F_j(x) v \in (\E_j^F)_p, \qquad   p \in F, \, x \in {\g}, \, \lambda^F_j\in {\g}^\ast\simeq \R,
\eqn
where $\lambda^F_1,\ldots,\lambda^F_{n-n_F} \in \Z\setminus \{0\}$ are the weights of the ${{S^1}}$-action on $(\Sigma_F)_p$. They do not depend on the point $p\in F$ because $F$ is connected, and they can be grouped into positive weights   $\lambda^F_1,\dots, \lambda^F_{\ell_F^+}$ and negative weights $\lambda^F_{\ell_F^++1},\dots, \lambda^F_{\ell_F^++\ell_F^-}$. The codimension of $F$  in $M$ is given by $\mathrm{codim}\,  F=2(n-n_F)=2(\ell_F^++\ell_F^-)$.  To conclude this subsection, we recall the  local normal form theorem for the momentum map $\J$ due to  Guillemin-Sternberg \cite{guillemin-sternberg84} and Marle
\cite{marle85}, 
%It gives a canonical description of $\J$ in a neighborhood of each ${G}$-orbit in  $\J^{-1}(\mklm{\zeta})$ for each $\zeta\in {\g}^\ast$, essentially reducing $\J$ to the momentum map of a linear symplectic group action.
which in our situation reduces to the following

\begin{prop}\label{prop:localnormform}
For each component $F \in \F$, there exist 
\begin{enumerate}[leftmargin=*]
\item a faithful unitary representation $\rho_F: S^1 \to (S^1)^{\ell_F^++\ell_F^-} \subset U(\ell_F^+) \times U(\ell_F^-)\subset U(\ell_F^++\ell_F^-)$ with positive weights $\lambda^F_1,\dots, \lambda^F_{\ell_F^+}\in \N$ and negative weights $\lambda^F_{\ell_F^++1},\dots, \lambda^F_{\ell_F^++\ell_F^-}\in -\N$, 
\item a principal $K_F$-bundle $P_F \rightarrow F$, where $K_F$ is a subgroup of $U(\ell_F^+) \times U(\ell_F^-)$ commuting with $\rho_F(S^1)$,
\end{enumerate}
such that 
$$
\Sigma_F\simeq P_F\times_{K_F}\C^{\ell_F^++\ell_F^-},
$$
where $P_F\times_{K_F}\C^{\ell_F^++\ell_F^-}\rightarrow F$ is the vector bundle  associated to $P_F$ by the $K_F$-action. Furthermore, there is a symplectomorphism $\Phi_F:U_F\rightarrow V_F$ from an $S^1$-invariant neighborhood $U_F$ of $F$ in $M$ onto  an $S^1$-invariant neighborhood $V_F$ of the zero section in $\Sigma_F$, which is equivariant  with respect to the $S^1$-action on $\Sigma_F\simeq P_F\times_{K_F}\C^{\ell_F^++\ell_F^-}$ given by $\rho_F$, and 
\bq
\label{eq:30.06.2018}
\J \circ \Phi_F^{-1}([\wp,z])= \frac 12 \sum_{j=1}^{\ell_F^++\ell_F^-} \lambda^F_j |z_j|^2 + \J(F), \quad z =(z_1,\dots,z_{\ell_F^++\ell_F^-}),\; [\wp,z]\in P_F\times_{K_F}\C^{\ell_F^++\ell_F^-},
\eq
where $\J(F)$ is the constant value of  $\J$ on $F$.  In particular,  $2\ell_F^-$ and $2\ell_F^+$ are the dimensions of the negative and positive eigenspaces  of the Hessian of $\J$ at a point of $F$, respectively.
\end{prop}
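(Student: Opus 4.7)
The plan is to deduce this statement as the $G=S^{1}$ specialization of the general equivariant normal form theorem of Guillemin--Sternberg \cite{guillemin-sternberg84} and Marle \cite{marle85}. The first step is to construct the symplectomorphism $\Phi_F$ via the standard two-stage procedure: choose an $S^{1}$-invariant Riemannian metric on $M$ by averaging, use the associated equivariant exponential map to produce an $S^{1}$-equivariant diffeomorphism from a neighborhood of the zero section of the normal bundle $NF\simeq \Sigma_F$ onto a neighborhood $U_F$ of $F$ in $M$, and then run an $S^{1}$-equivariant relative Moser isotopy to correct the pulled-back symplectic form to the canonical one on the total space of $\Sigma_F$. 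Both steps require averaging arguments that work without difficulty since $S^{1}$ is compact.

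Next, I would unravel the linear $S^{1}$-action on $\Sigma_F$. Since $F\subset M^{S^{1}}$, the action is trivial on $F$, so $S^{1}$ acts linearly and symplectically on each fiber. Choosing an $S^{1}$-invariant compatible complex structure on $\Sigma_F$ (which exists by averaging in the contractible space of compatible structures) turns $\Sigma_F$ into a Hermitian $S^{1}$-equivariant vector bundle. Simultaneous diagonalization of the isotropy representation yields the splitting \eqref{eq:splitting111} with integer weights $\lambda_j^F$, which are constant along $F$ by connectedness. Grouping these weights by sign produces $\rho_F$, with faithfulness ensured by the effectiveness hypothesis on the $S^{1}$-action. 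Taking $K_F$ to be the centralizer of $\rho_F(S^{1})$ in $U(\ell_F^+)\times U(\ell_F^-)$ and $P_F\to F$ to be the unitary frame bundle of $\Sigma_F$ adapted to the weight decomposition yields the associated bundle description $\Sigma_F\simeq P_F\times_{K_F}\C^{\ell_F^++\ell_F^-}$.

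To verify the explicit momentum map formula, I would transport the defining relation $dJ(X)=i_{\overline{X}}\sigma$ to the linear model via $\Phi_F$. On $(\C^{\ell_F^++\ell_F^-},\omega_{\mathrm{std}})$ a direct computation with the fundamental vector field of the weighted $S^{1}$-action shows that $\frac{1}{2}\sum_j \lambda_j^F |z_j|^{2}$ is a primitive of $i_{\overline{X}}\omega_{\mathrm{std}}$. Since $\J$ is $S^{1}$-invariant and $\Phi_F$ is $S^{1}$-equivariant, $\J\circ\Phi_F^{-1}$ agrees with this expression up to a locally constant term, and that term must equal $\J(F)$ because $F$ is connected and sits inside the zero section. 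The Hessian statement follows immediately: in real coordinates $z_j=x_j+iy_j$ adapted to \eqref{eq:splitting111} the Hessian of $\frac{1}{2}\sum \lambda_j^F |z_j|^{2}$ at $z=0$ is diagonal with eigenvalues $\lambda_j^F$ each appearing twice, giving exactly $2\ell_F^+$ positive and $2\ell_F^-$ negative eigenvalues.

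The step requiring the most care is arranging three structures on $\Sigma_F$ simultaneously: the symplectic form (for relative Darboux), an $S^{1}$-invariant compatible complex structure (for the weight decomposition into complex line bundles), and the $K_F$-reduction (for the associated bundle presentation with structure group commuting with $\rho_F$). All three are arranged by averaging and by exploiting the contractibility of the space of compatible triples, so no essential obstruction arises beyond the bookkeeping of the weight blocks.
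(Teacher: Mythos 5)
The paper does not actually prove this proposition: its ``proof'' is a one-line citation to Lemma 3.1 of Lerman--Tolman. What you have written is a correct reconstruction of the standard argument behind that citation (equivariant tubular neighborhood via an invariant metric, equivariant relative Moser, diagonalization of the isotropy representation into weight line bundles, and uniqueness of the momentum map up to a constant on a connected set), so your route is sound and is essentially the proof the authors are outsourcing. Two small points deserve to be made explicit. First, your verification of \eqref{eq:30.06.2018} computes the primitive of $i_{\overline{X}}\omega_{\mathrm{std}}$ on a single fiber $\C^{\ell_F^++\ell_F^-}$, but the momentum map lives on the total space of the associated bundle $P_F\times_{K_F}\C^{\ell_F^++\ell_F^-}$; you need to fix the model symplectic form there (the minimal coupling form built from a connection on $P_F$) and observe that the fundamental vector field of the $S^1$-action is vertical and the connection and base terms contribute nothing, after which your uniqueness-up-to-constant argument closes the formula. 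Second, faithfulness of $\rho_F$ does not follow formally from effectiveness without an extra step: if the weights at $F$ had a common divisor $d>1$, the subgroup $\Z/d\subset S^1$ would act trivially on a whole neighborhood of $F$ (via the local model), hence on all of the connected manifold $M$, contradicting effectiveness; this is the argument that should be recorded. With these two clarifications your proposal is complete.
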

\begin{proof} See \cite[Lemma 3.1]{lerman-tolman00}. \end{proof}

As mentioned above, the zero level set of the momentum map is connected. Together with  Proposition \ref{prop:localnormform} this implies that if all weights of the isotropy representation of some fixed point component $F$ have the same signs, the {set} $F$ is already the whole zero level set $\J^{-1}(0)$, since $F$ is isolated. Thus from now on, we assume that all fixed point components in the zero level set have positive and negative weights.

\subsection{Singular cohomology of  symplectic quotients}
\label{sec:2.1}

In what follows, we shall review  the de Rham model  developed by Sjamaar in \cite{sjamaar05}  for  the singular or \v Cech cohomology  $H(\M_0;\R)$ with real coefficients  of a symplectic quotient. Quite often, we shall simply write $H(\M_0)$ for $H(\M_0;\R)$. This model will be the departing point for our following considerations and as in the previous subsection we we restrict ourselves to the case where $G:=S^1$, the notation being as before.  Sjamaar's model relies on the notion of a \emph{differential form on $\M_0$}, by which one understands a differential form $\alpha$ on the top stratum $\M_0^\top$  which is induced by a ${S^1}$-invariant differential form $\widetilde \alpha$ on $M$ in the sense that  $$(\pi_\top)^\ast \alpha = (\iota_\top)^\ast \widetilde \alpha.$$
 The space of all differential forms on $\M_0$ is denoted by $\Omega(\M_0)$.  Recall that a form $\beta \in \Omega(M)$ is called \emph{basic} if it is ${S^1}$-invariant and ${S^1}$-horizontal, the latter meaning that $i_{\overline X} \beta=0$ for all $X \in \g$, and notice that if $\tilde \alpha$ induces $\alpha$, then $(\iota_\top)^\ast \widetilde \alpha$ is ${S^1}$-horizontal as a differential form on $\J^{-1}(0)^\top$. It is therefore natural to define $\beta \in \Omega(M)$ to be \emph{$\J$-basic} if it is ${S^1}$-invariant and $(\iota_\top)^\ast \beta$ is ${S^1}$-horizontal on $\J^{-1}(0)^\top$. The set $\Omega_\J(M)$ of $\J$-basic differential forms is a subcomplex of $\Omega(M)$ and the kernel of the surjection $\Omega_\J(M) \rightarrow \Omega(\M_0)$ is the ideal
\bqn 
I_\J(M):= \mklm{ \beta \in \Omega(M)^{S^1} \mid (\iota_\top)^\ast \beta=0 }.
\eqn
Thus, 
\bqn 
\Omega(\M_0) \simeq \Omega_\J(M)/ I_\J(M).
\eqn
%which in degree zero reads
%\bqn 
%\Cinft(\M_0) \simeq \Cinft(M)^{S^1} / \mklm{f \in \Cinft(M)^{S^1} \mid f =0 \text{ on } \J^{-1}(\{0\})}.
%\eqn
Furthermore, there is a well-defined restriction map $\Omega(\M_0) \rightarrow \Omega(\M_{0,(H)})$ for each stratum of $\M_0$, see \cite[Lemma 3.3]{sjamaar05}. Next, define for each open set $U$ of $\M_0$ the set of differential forms $\Omega(U)$ on $U$ in a similar way as the set  $\Omega(\M_0)$, and consider the presheaf of differential graded $\R$-algebras $\Omega:U \longmapsto \Omega(U)$. As can be shown \cite[Lemma 5.1 and 5.4]{sjamaar05}, $\Omega$ constitutes an acyclic sheaf, and the sequence
\bqn 
0 \longrightarrow \underline \R \stackrel{i}{\longrightarrow} \Omega^0 \stackrel{d}{\longrightarrow} \Omega^1 \stackrel{d}{\longrightarrow} \Omega^2 \stackrel{d}{\longrightarrow} \cdots 
\eqn
is exact, where $\underline \R$ is the sheaf of locally constant real-valued functions on $\M_0$ and $i:\underline \R \to \Omega^0$ the natural injection. In other words, the complex $\Omega$ is an acyclic resolution of the constant sheaf, and a standard argument in sheaf theory  \cite[Theorem 5.5]{sjamaar05} implies that  
\bq\label{eq:sjamdR}
H^\ast(\Omega(\M_0),d) \simeq H^\ast(\M_0; \R), \qquad   \text{\emph{(de Rham theorem)}},
\eq
  since $\M_0$ is compact and locally contractible.  
%Now, choose a ${S^1}$-invariant compatible almost complex structure $\mathscr{J}$ on $M$, and consider the Riemannian metric $\sigma(\cdot, \mathscr{J} \cdot )$ defined by $\sigma$ and $\mathscr{J}$. The corresponding volume element is identical to the Liouville form $dM:=\sigma^n/n!$, and both $\mathscr{J}$ and the Riemann metric descend to each stratum of $\M_0$. 
Further, if $\d \M_0^\top:=\sigma_\top^{\dim  \M_0^\top/2}/(\dim  \M_0^\top/2)!$ denotes the volume element of the top stratum it can be shown \cite[Lemma 7.1]{sjamaar05} that for every sufficiently small open set $U$ of $\M_0$ the volume of its top part  is finite. In particular, if $\M_0$ is compact, $\M_0^\top$ has finite volume. Finally, by  \cite[Proposition 7.4]{sjamaar05} one has 
\bq
\label{eq:stokes}
\int_{\M_0^\top} d \beta =0 \qquad   \text{\emph{(Stokes theorem)}}
\eq
for any $\beta \in \Omega^{\dim \M_0^\top-1}(\M_0)$ of compact support. As a consequence, if $\M_0$ is compact, 
\bq
\sigma^k_\top \in H^{2k}(\Omega(\M_0)) \quad \text{ is nonzero for $0 \leq k \leq \dim \M_0^\top/2$},
\eq
see \cite[Corollary 7.6]{sjamaar05}. That is, the symplectic form $\sigma_\top$  and its non-zero powers are  non-trivial  in the singular cohomology of $\M_0$. On the other hand, $H(\M_0;\R)$ may not satisfy Poincar\'e duality. 

\section{Equivariant complex blow-up and partial desingularization of a circle action}

Let $G$ be a Lie group.  For a  proper Hamiltonian action of $G$ on a smooth symplectic manifold $(M,\sigma)$ there is a standard procedure how to desingularize this action, compare  \cite[6.10.~Remark]{kirwan85} and \cite[Section 4]{meinrenken-sjamaar}. This procedure associates with such an action a smooth symplectic manifold $(\BLG(M), \widetilde{\sigma})$, which carries a Hamiltonian $G$-action, where $0$ is a regular value of the momentum map, and a smooth $G$-equivariant map
\[
\beta \colon \BLG(M) \longrightarrow M
\]
such that $\beta \colon \BLG(M) \setminus \beta^{-1}(M^{G}) \rightarrow M \setminus M^{G}$ is an equivariant diffeomorphism.  In this section,  we describe this procedure, which  is based on an iteration of blow-ups,  in the case that $G=S^1$ and explain how it yields a partial desingularization of the symplectic quotient $\M_{0}$.

\subsection{Complex blow-up}\label{sec:blowup}

Let $M$ be a $2n$-dimensional smooth manifold and $N \subset M$  a connected smooth submanifold  of codimension $2k$ whose normal bundle
\[
\nu \colon Q \longrightarrow N, \qquad Q_p:=T_pM/T_pN \text{ for all } p \in M
\]
has a structure group which reduces to $\U(k)$ embedded into $\mathrm{O}(2k)$ as $\mathrm{O}(2k)\cap \mathrm{Sp}(2n,\R)$. Projectivize this bundle fiberwise and denote the resulting bundle by  $\CP(Q) \rightarrow N$. Next, consider the space
\[
L_Q := \{(l,q) \in \CP(Q)\times Q \mid q \in l\},
\]
where $q \in l$ in particular means that $l$ is a line in $Q_{\nu(q)}$, together with  the commutative diagram 
\bq
\begin{tikzcd}
L_Q\arrow{r}{\varphi} \arrow{d}{\lambda} &Q\arrow{d}{\nu}\\
\CP(Q) \arrow{r} &N,
\end{tikzcd}\label{eq:diagram111}
\eq
where the map $\varphi$ sends $(l,q)$ to $q$ and  the map $\lambda$ sends $(l,q)$ to $l$. 

In order to describe the structure of $L_Q$ it is convenient to first introduce  the universal line bundle 
\[
L:=\{(l,z) \in \CP^{k-1} \times \mathbb{C}^{k} \mid z \in l \} \, \longrightarrow \, \CP^{k-1},
\]
whose total space can also be described as   
\begin{align}\label{eq:25.04.2022}
L&=\{(l,z) \in \CP^{k-1} \times \mathbb{C}^{k} \mid l_{i}z_{j}=l_{j}z_{i} \quad \forall\, 1 \leq i < j \leq k \} ,
\end{align}
where $l=[l_1:\dots: l_{k}]$ denote homogeneous coordinates. In particular, $L$ represents the blow-up of $\mathbb{C}^{k}$ along the origin. With this notation, we see that $L_Q$ is a smooth fiber bundle over $N$ with bundle projection $\nu\circ \varphi$ and fiber $L$. Furthermore, $L_Q$ is a smooth line bundle over $\CP(Q)$ with bundle projection given by the map $\lambda$ in \eqref{eq:diagram111}.  

Let further $V \subset Q$ be a closed disc bundle,  diffeomorphic to a closed tubular neighborhood $W \subset M$ of $N$, put $\widetilde{V} := \varphi^{-1}(V)$, and identify $V$ and $W$, which allows us to consider $\varphi|_{\widetilde{V}}$ as a map $\varphi \colon \widetilde{V} \rightarrow  W$. In particular, since $\varphi$ is clearly a diffeomorphism outside the zero section of $Q$, we get that $\varphi|_{\partial\widetilde{V}}:\partial\widetilde{V}\to \partial W$ is a  diffeomorphism. With these preparations, we make the following
\begin{definition}\label{def:blowup}
The \emph{blow-up of $M$ along $N$} is the smooth manifold
\[
\mathrm{Bl}_{N}(M):=\overline{M \setminus W} \cup_{\varphi|_{\partial\widetilde{V}}} \widetilde{V}
\]
obtained by glueing the manifolds with boundary $\overline{M \setminus W}$ and $\widetilde{V}$ with  the gluing map $\varphi|_{\partial\widetilde{V}}:\partial\widetilde{V}\to \partial W$.  The map   $ \beta \colon \mathrm{Bl}_{N}(M) \rightarrow M$ defined by
\[
\beta := \begin{cases} \id \quad \text{on } \overline{M \setminus W}\\
\varphi \quad \text{on } \widetilde{V} \end{cases}
\]
is called the \emph{blow-down map}. The set $\beta^{-1}(N)\subset\widetilde{V}$  is called  the \emph{exceptional divisor} of $\mathrm{Bl}_{N}(M)$. It is the zero section of $L_Q$, regarded as a line bundle over $\CP(Q)$, and can thus be identified with $\CP(Q)$.
\end{definition}
\begin{rem} \label{choices}
The various choices involved in the construction of the blow-up do not cause problems since they lead to equivalent results in the sense that two different choices lead to blow-down maps $\beta \colon \mathrm{Bl}_{N}(M) \rightarrow M$ and $\beta' \colon \mathrm{Bl}_{N}(M)' \rightarrow M$ for which there is a diffeomorphism $f \colon \mathrm{Bl}_{N}(M)\rightarrow \mathrm{Bl}_{N}(M)'$ such that $\beta' \circ f = \beta$. 
\end{rem}
We immediately see that the blow-down map $\beta$ is a diffeomorphism outside the exceptional divisor.

For later use it will be convenient to have  explicit coordinates on $\mathrm{Bl}_{N}(M)$ at  our disposal.  Since $N$ is a codimension $2k$ submanifold of $M$, there is an atlas $\{(U,\varphi_{U})\}$  of $M$ with coordinate maps $
\varphi_{U}(p)=w=(w_{1},\ldots,w_{2n})$ 
such that if $U\cap N \neq \emptyset$, 
\bqn \varphi_{U}(U\cap N)=\{(w_{1},\ldots,w_{2n}) \in \varphi_{U}(U) \mid (w_{1},\ldots,w_{2k})=0 \}.
\eqn
 By shrinking the sets $U$, we can assume without loss of generality that every $U$ with $U\cap N\neq \emptyset$ is contained in the interior of $W$. Recalling the identification $W\simeq V$, the atlas $\{(U,\varphi_{U})\}$ induces an open cover $\mklm{U^\#:=\beta^{-1}(U)}$ of  $\mathrm{Bl}_{N}(M)$ such that % , consisting of manifolds $U^{\#}$ together with maps $\beta_{U} \colon U^{\#} \rightarrow U$, which will glue to $\beta \colon \mathrm{Bl}_{N}(M) \rightarrow M$. 
\begin{itemize}
\item  if $U^\# \cap \beta^{-1}(N) =\emptyset$,  $\beta:U^\#\to U$ is a diffeomorphism;
\item  if $U^\# \cap \beta^{-1}(N) \neq \emptyset$, we have
\[
U^\#\simeq\{ (l,p) \in  \CP^{k-1}\times U   \mid  (l,(z_{1},\ldots,z_{k})) \in L\}, 
\]
where we made the identification $\varphi_{U}(U) \subset \mathbb{R}^{2k} \times \mathbb{R}^{2(n-k)} \simeq \mathbb{C}^{k} \times \mathbb{R}^{2(n-k)}$ and  introduced the complex coordinates $(z_{1},\ldots,z_{k}):=(w_{1},\ldots,w_{2k})=\varphi_U(p)$, where $z_{d}:=w_{d}+i\cdot w_{i+k}$.
\end{itemize}
In the first case, we get coordinates  $\varphi_{U^\#}:=\varphi_{U}\circ \beta$ on $U^\#$ and in the following it will sometimes be convenient to identify $U\simeq U^\#$ and just write $\varphi_{U}$ instead of $\varphi_{U^\#}$. To obtain coordinate maps in the second case, consider the standard open cover $\{V_{i}\}_{1 \leq i \leq k}$ of $\CP^{k-1}$, where $V_{i}:=\mklm{ l \mid l_i \neq 0}$.  Introduce coordinates on $V_{i}$ by setting 
\bq
(u_{1},\ldots,u_{k-1}):=\left(\frac{l_{1}}{l_{i}},\ldots,\frac{l_{i-1}}{l_{i}}, \frac{l_{i+1}}{l_{i}}, \ldots, \frac{l_{k}}{l_{i}} \right).\label{eq:coordui}
\eq
The $V_i$ induces a cover of $U^\#$ by sets $U_i^\#\simeq \mklm{ (l,p) \in  V_i\times U   \mid  (l,(z_{1},\ldots,z_{k})) \in L} $, which by \eqref{eq:25.04.2022}  are given in terms of the equations
\[
z_{j}=z_{i} u_{j}, \quad 1 \leq j<i \quad \text{and} \quad  z_{j}=z_{i} u_{j-1}, \quad i<j\leq k,
\]
yielding the  coordinate maps 
\begin{align*}
\varphi_{U_i^\#} \colon U_{i}^{\#} &\rightarrow \varphi_{U_i^\#}(U_{i}^{\#}),  \qquad  (l,p) \mapsto (u_{1},\ldots,u_{k-1}, z_{i},w_{2k+1},\ldots,w_{2n}).
\end{align*}
 A simple computation then shows that the charts $\{(U^{\#},\varphi_{U^\#})\}$ and $\{(U_i^{\#},\varphi_{U_i^\#})\}$ indeed constitute a smooth atlas for  $\mathrm{Bl}_{N}(M)$, compare \cite[Section 3.1]{igusa00}.

With respect to the charts introduced above, the blow-down map $\beta$ is given near the exceptional divisor as follows:  If $U^\# \cap \beta^{-1}(N) \neq \emptyset$, then $\beta|_{ U^\#}$ amounts to mapping $(l,p)$ to $p$. In the latter case,  if $p \notin N$, one has $z_{i} \neq 0$ for some $1 \leq i \leq k$ and $\beta|_{ U^\#}^{-1}(\{p\})\simeq \{(l,p) \mid l=[z_{1}:\ldots:z_{k}]\}$; on the contrary, if  $p \in N$, then $\beta|_{ U^\#}^{-1}(\{p\})\simeq  \CP^{k-1} \times \{ p \}$. More precisely, in the coordinates provided by $\varphi_{U_i^\#}$ and $\varphi_U$, 
\begin{itemize}
\item  the map $\beta$ is given by the monoidal transformation 
\bqn 
\varphi_U \circ \beta|_{ U_i^\#}\circ \varphi_{U_i^\#}^{-1}:(u_{1},\ldots,u_{k-1}, z_{i},w_{2k+1},\ldots,w_{2n})  \longmapsto (z_i(u_1,\dots, 1, \dots, u_{k-1}),w_{2k+1}, \dots, w_{2n})
\eqn
with $1$ at the $i$-th position,
\item  $\beta^{-1}(N)\cap U_i^\#$ corresponds to the set of points  $\mklm{(l,p) \mid z_i=0}$.
\end{itemize}
In particular, the second statement means that the exceptional divisor $ \beta^{-1}(N) \subset  \mathrm{Bl}_{N}(M) $ is a smooth submanifold of real codimension two.

\subsection{Partial desingularization of a Hamiltonian circle action} \label{pbundleperspective}
The case relevant to us is when $M$ is a  $2n$-dimensional symplectic manifold and  $N \subset M$  is a connected symplectic submanifold of codimension $2k$, which we will assume from now on. In this case, the normal bundle $\nu \colon Q \rightarrow N$ can be identified with the symplectic normal bundle and carries  an almost complex structure. As a consequence,  there is a reduction of the normal frame bundle to $\U(k)$, that is, we get  a principal $\U(k)$-bundle 
$P \rightarrow N$ with
\[
Q \simeq P \times_{\U(k)} \mathbb{C}^{k}.
\]
Thus, the diagram \eqref{eq:diagram111} from the above construction can be written as
\begin{center}
\begin{tikzcd}
 L_Q\simeq P \times_{\U(k)} L \arrow{r}{\varphi} \arrow{d}{\lambda} & P \times_{\U(k)} \mathbb{C}^{k} \simeq Q \arrow{d}{\nu}\\
\CP(Q) \simeq P \times_{\U(k)} \CP^{k-1} \arrow{r} &N
\end{tikzcd}
\end{center}
with the map $\varphi$  given by   $\varphi([\wp,(l,z)])=[\wp,z]$. Compare \cite{mcduff1987} and \cite[Chapter 4]{tralle-oprea97}.

 Assume now  that $M$ carries a Hamiltonian action of $G=S^1$ with momentum map $\J \colon M \to \mathbb{R}$ and recall the notation from Section \ref{sec:sympl.quot}; in particular,  $\F$ denotes the finite set of connected components of $M^{S^1}$. 
 \begin{definition}
We denote by $\BLS(M)$  the \emph{blow-up} of $M$, which we define as the smooth manifold that results from successively blowing up $M$ according to Definition \ref{def:blowup} along all  $F \in \F$, and by $\beta: \BLS(M) \rightarrow M$ the composition of all the blow-down maps.  We call  $E:=\beta^{-1}(M^{S^1})\subset\BLS(M) $ the \emph{exceptional divisor} and for  $F \in \F$ we call $E_F:=\beta^{-1}(F)$  the \emph{exceptional locus} associated with $F$. Furthermore, the \emph{strict transform} is the closure
$$
\widetilde{\J^{-1}(0)}:=\overline{\beta^{-1}(\J^{-1}(0)^{\top})}\subset \BLS(M).
$$
 \end{definition}
Note that the exceptional loci are the connected components of the exceptional divisor.

\begin{prop}\label{prop:desingzero}
The strict transform $\widetilde{\J^{-1}(0)}$ and the exceptional divisor $E$ are smooth submanifolds of $\BLS(M)$ of real codimension $1$, resp. $2$,  with simple normal crossings. More precisely, there is an atlas $\mklm{(\mathscr{U},\varphi_{\mathscr{U}})}$ of $\BLS(M)$ such that for each $\mathscr{U}$ satisfying  $\mathscr{U} \cap \widetilde{\J^{-1}(0)} \cap E_F\neq 0$ for some $F\in \F$ with $\dim F=2k$,   the coordinate map $\varphi_\mathscr{U}(p)=(w_{1},\ldots,w_{2n})\equiv(z_{1},\ldots,z_{k},w_{2n-\dim F+1},\ldots,w_{2n})\in \C^k \times \R^{2n-2k}$ fulfills 
\begin{itemize}
\item $w_{2n-\dim F+1},\ldots,w_{2n}$ are local coordinates on $F$;  
\item  $E_F \cap \mathscr{U}=\{w_i=w_{i+k}=0\}=\{z_{i}=0\}$ for some $1\leq i \leq k$; 
\item $\widetilde{\J^{-1}(0)} \cap \mathscr{U}=\{w_j=0\}$  for some $1\leq j \leq 2n-\dim F$ with $i\neq j$, $i \neq i+k$;
\item $\beta(w_{1},\ldots,w_{2n})=(z_{i}(z_1,\ldots,1, \ldots, z_{k}),w_{2n-\dim F+1},\ldots,w_{2n})$ with $1$ at the $i$-th position, where on the right-hand side we use local coordinates of $M$ in the open set $\beta(\mathscr{U})$.
\end{itemize} 
\end{prop}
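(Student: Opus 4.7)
The strategy is to reduce the whole problem to a local analysis near one fixed point component $F\in\F$, use the Guillemin--Marle local normal form of Proposition \ref{prop:localnormform} to write $\J$ explicitly in coordinates, and then read off $\widetilde{\J^{-1}(0)}$ and $E_F$ from the monoidal transformation described at the end of Section \ref{sec:blowup}. The components of $M^{S^1}$ are pairwise disjoint closed submanifolds and the successive blow-ups take place in pairwise disjoint neighborhoods; hence the exceptional loci $E_F$ are pairwise disjoint, and the whole problem is local near each $E_F$. Moreover, if $\J(F)\neq 0$, continuity forces $\J^{-1}(0)$ to be disjoint from a neighborhood of $F$ in $M$, so that $\widetilde{\J^{-1}(0)}\cap E_F=\emptyset$; therefore in the situation of the proposition we may assume $F\in\F_0$, and in particular $\J(F)=0$.

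\textbf{Local computation.} Set $k:=\ell_F^++\ell_F^-=n-n_F$, which is half the codimension of $F$. Proposition \ref{prop:localnormform} gives an $S^1$-equivariant symplectomorphism from a neighborhood $U_F$ of $F$ in $M$ onto a neighborhood of the zero section in $\Sigma_F\simeq P_F\times_{K_F}\C^k$. Trivializing $P_F$ locally and choosing coordinates $(w_{2k+1},\dots,w_{2n})$ on $F$, one obtains coordinates $(z_1,\dots,z_k,w_{2k+1},\dots,w_{2n})\in\C^k\times\R^{2n-2k}$ on $U_F$ in which $F=\{z_1=\cdots=z_k=0\}$ and, by \eqref{eq:30.06.2018},
\[
\J=\tfrac12\sum_{j=1}^{k}\lambda_j^F|z_j|^2.
\]
Applying the construction of Section \ref{sec:blowup} yields the charts $U_i^\#$ with coordinates $(u_1,\dots,u_{k-1},z_i,w_{2k+1},\dots,w_{2n})$, and composing $\J$ with the monoidal transformation gives
\[
\J\circ\beta|_{U_i^\#}=\tfrac12|z_i|^2\,f(u), \qquad f(u):=\lambda_i^F+\sum_{j\neq i}\lambda_j^F|u_{\widetilde\jmath}|^2,
\]
with the reindexing $\widetilde\jmath$ dictated by \eqref{eq:coordui}. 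Hence $E_F\cap U_i^\#=\{z_i=0\}$ has real codimension $2$, and since $\J^{-1}(0)^\top$ is disjoint from $M^{S^1}$ one has
\[
\widetilde{\J^{-1}(0)}\cap U_i^\#=\overline{\{z_i\neq 0,\ f=0\}}=\{f=0\}.
\]

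\textbf{Smoothness, transversality and straightening.} At any point of $\{f=0\}$ one has $\sum_{j\neq i}\lambda_j^F|u_{\widetilde\jmath}|^2=-\lambda_i^F\neq 0$ (the weights of a faithful $S^1$-representation being nonzero integers), so some $u_{\widetilde\jmath}\neq 0$ and $\nabla_u f\neq 0$ there; the standing assumption made at the end of Section \ref{sec:sympl.quot} that each $F\in\F_0$ carries weights of both signs ensures that $\{f=0\}$ is nonempty in the first place. Thus $\{f=0\}$ is a smooth real hypersurface in $U_i^\#$, and since $f$ depends only on the $u$-variables, it is automatically transverse to the coordinate hyperplane $\{z_i=0\}$, which gives the simple-normal-crossings statement. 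By the implicit function theorem, on a neighborhood $\mathscr U\subset U_i^\#$ of any point of $\{f=0\}\cap\{z_i=0\}$ we may replace one of the real coordinates among $\{\Re u_{\widetilde\jmath},\Im u_{\widetilde\jmath}\}_{j\neq i}$ by $f$ itself, keeping $z_i$ and $w_{2k+1},\dots,w_{2n}$ unchanged; relabelling this new real coordinate as $w_j$ with $j\in\{1,\dots,2k\}\setminus\{i,i+k\}$ produces the chart announced in the proposition, and $\beta$ in these coordinates is still the monoidal transformation composed with this smooth change of variables. The only genuinely non-trivial step is the smoothness of the strict transform, which depends crucially on the weighted quadratic form of $\J$ provided by the local normal form together with the sign condition on the weights of $F\in\F_0$; once this is established, the remaining verifications are pure bookkeeping with the atlas already set up in Section \ref{sec:blowup}, and the global statement follows at once since the various $E_F$ are pairwise disjoint.
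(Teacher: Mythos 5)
Your proposal is correct and follows essentially the same route as the paper: reduce to a fixed point component $F$ with $\J(F)=0$, use the local normal form of Proposition \ref{prop:localnormform} to write $\J$ as the weighted quadratic form $\frac12\sum\lambda_j^F|z_j|^2$, pull back through the monoidal transformation in the charts of Section \ref{sec:blowup}, and identify the strict transform with the non-singular quadric $\{\lambda_i^F+\sum_{j\neq i}\lambda_j^F|u_{\widetilde\jmath}|^2=0\}$, whose smoothness and transversality to $\{z_i=0\}$ follow from the non-vanishing of the weights, after which the defining function is taken as a new coordinate. The only cosmetic difference is that you phrase the closure step as $\overline{\{z_i\neq 0,\,f=0\}}=\{f=0\}$ up front, whereas the paper verifies the same identity by an explicit computation deferred to the subsequent corollary; both are fine.
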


\begin{proof} Since the exceptional loci $E_F$ are the connected components of $E$ and at the end of Section \ref{sec:blowup} we saw that each $E_F$ is a smooth codimension $2$-submanifold of $\BLS(M)$,  the same holds for $E$. 

If an  $F \in \F$ satisfies $\widetilde{\J^{-1}(0)} \cap E_F\neq 0$, then $F\subset \J^{-1}(0)$. For such an $F$, recall the description of the symplectic normal bundle $\Sigma_F\simeq P_F\times_{K_F}\C^{\ell_F^++\ell_F^-}$ with $\mathrm{codim}\,  F=2n-\dim F=2(\ell_F^++\ell_F^-)=2k$ and the local normal form of the momentum map near $F$ given in Proposition  \ref{prop:localnormform},  by which the zero level set  is locally described by the relation  
\bqn
\Phi_F(\J^{-1}(0) \cap U_F)= \left\{ [\wp,z_{1},\ldots,z_{\ell_F^++\ell_F^-}] \in V_F \subset  P_F \times_{K_F}\mathbb{C}^{\ell_F^++\ell_F^-} \mid  \sum \lambda^F_{i} |z_i|^2=0  \right\}.
\eqn
Construct an atlas $\mklm{(\mathscr{U},\varphi_{\mathscr{U}})}$ of $\BLS(M)$ following the procedure described at the end of  Section \ref{sec:blowup}, where the atlas $\{(U,\varphi_{U})\}$  of $M$ with $\varphi_U(p)=w=(w_1,\dots,w_{2n})$ underlying this construction  is given in terms of  the local trivializations of the fiber bundle $E_F$ over $F$ and the  symplectomorphisms $\Phi_F:U_F\rightarrow V_F $ with the identification $\C^{\ell_F^++\ell_F^-} \ni z=(z_1,\dots, z_{\ell_F^++\ell_F^-})\equiv(w_1,\dots, w_{2\ell_F^++2\ell_F^-})\in \R^{2\ell_F^++2\ell_F^-}$.  Each $\mathscr{U}$ with  $\mathscr{U} \cap E_F \neq \emptyset$ is then mapped by $\varphi_{\mathscr{U}}$ onto a set of the shape
\[
\mklm{ (l,p) \in  V_i \times U   \mid  (l,(w_{1},\ldots,w_{2\ell_F^++2 \ell_F^-})) \in L}
\]
for some suitable $U$ and $1 \leq i \leq k$, with $w_{2\ell_F^++2 \ell_F^-+1},\ldots,w_{2n}$ local coordinates of $F$ and $\varphi_{\mathscr{U}}(\mathscr{U} \cap E_F)=\mklm{z_i=0}$. Since the group $K_F\subset U(\ell_F^+) \times U(\ell_F^-)$ leaves the quadratic form  $\sum \lambda^F_{i} |z_i|^2$ invariant, we get\footnote{Compare with the proof of \cite[Lemma 3.1]{lerman-tolman00}, as well as \cite[Remark 3.2]{lerman-tolman00} and especially \cite[pp.\ 474--475]{GH94}.}  
 \begin{align*}
\varphi_{\mathscr{U}}\big(\mathscr{U} \cap \beta^{-1}(\J^{-1}(0)^\top)\big) &= \mklm{(l,p)\in V_i \times (\J^{-1}(0)^\top\cap U)   \mid (l,(w_{1},\ldots,w_{2\ell_F^++2 \ell_F^-})) \in L }\\
&=  \bigg \{(l,p)\in V_i \times U   \mid (l,(w_{1},\ldots,w_{2\ell_F^++2 \ell_F^-})) \in L,\\
&\qquad \sum_{k=1}^{\ell_F^++\ell_F^-} \lambda^F_{k} (w_k^2+w_{\ell_F^++\ell_F^-+k}^2)=0,\; (w_{1},\ldots,w_{2\ell_F^++2 \ell_F^-})\neq 0\bigg \}.
  \end{align*}
Thus, 
  \begin{gather}\label{eq:16.04.2022}
\nonumber \varphi_{\mathscr{U}}(\mathscr{U} \cap \widetilde{\J^{-1}(0)})\\ = \bigg \{(l,p)\in V_i \times U \mid ([l_{1}\colon\ldots\colon l_{k}],(w_{1},\ldots,w_{2\ell_F^++2 \ell_F^-})) \in L, \,  \sum_{k=1}^{\ell_F^++\ell_F^-} \lambda^F_{k} |l_{k}|^2=0\bigg \}
  \end{gather}
since the standard coordinates $(u_{1},\ldots,u_{2n-\dim F-1})$ from \eqref{eq:coordui} for the elements $l\in V_i$ reveal that if $(l,(w_{1},\ldots,w_{2\ell_F^++2 \ell_F^-})) \in L$, then
  \bqn 
  \sum_{k=1}^{\ell_F^++\ell_F^-} \lambda^F_{k} (w_k^2+w_{\ell_F^++\ell_F^-+k}^2)=0\iff \sum_{k=1}^{i-1} \lambda^F_{\iota(k)} u_k^2+\lambda^F_i+\sum_{k=i+1}^{2n-\dim F} \lambda^F_{\iota(k)} u_k^2=0
  \eqn
  for suitable indices $\iota(k)$.\footnote{For more details on the computation of  \eqref{eq:16.04.2022} see the following corollary.} Moreover, \eqref{eq:16.04.2022} is actually a local description of $\widetilde{\J^{-1}(0)}$ as a non-singular quadric (recall that $\lambda^F_i\neq 0$), which reveals that $\widetilde{\J^{-1}(0)}$ is a smooth submanifold of $\BLS(M)$. Further, since $\varphi_{\mathscr{U}}(\mathscr{U} \cap E_F)=\mklm{z_i=0}$, $\widetilde{\J^{-1}(0)}$ is transversal to $E_F$. Taking $\sum_{k=1}^{i-1} \lambda^F_{\iota(k)} u_k^2+\lambda^F_i+\sum_{k=i+1}^{2n-\dim F} \lambda^F_{\iota(k)} u_k^2$ as a coordinate and relabling the others proves the third claim. Finally, the statement about the blow-down map in coordinates follows from the description of $\beta$ in coordinates given at the end of  Section \ref{sec:blowup}.
\end{proof}

\begin{cor}
For any component $F$ of the fixed point set the restriction of $\beta$ to the intersection of the exceptional locus $E_{F}$ associated with $F$ and the strict transform $\widetilde{\J^{-1}(0)}$ defines a fiber bundle over $F$ with fiber 
\bq
\left\{l=[l_{1}\colon\ldots\colon l_{k}] \in \CP^{k-1} \, \Big| \, \sum\limits_{i=1}^{k} \lambda_{i}^{F} |l_{i}|^{2}=0 \right\}.\label{eq:fiber}
\eq
\end{cor}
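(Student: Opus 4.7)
The plan is to combine the explicit local picture of $\widetilde{\J^{-1}(0)}\cap E_F$ provided by the coordinates from the proof of Proposition \ref{prop:desingzero} with the global description of the exceptional locus $E_F$ as a projectivized bundle associated with the principal $K_F$-bundle $P_F\to F$ from Proposition \ref{prop:localnormform}. Throughout, set $k:=\ell_F^++\ell_F^-$ and $q(l):=\sum_{i=1}^{k}\lambda_i^F |l_i|^2$ on $\C^k$, viewed as a function on $\CP^{k-1}$ up to global rescaling (only the zero set matters).

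First, I would recall that, since $F\subset\J^{-1}(0)$ has codimension $2k$, by construction of $\BLS(M)$ the exceptional locus fits into the identification
\[
E_F=\beta^{-1}(F)\simeq \CP(\Sigma_F)\simeq P_F\times_{K_F}\CP^{k-1},
\]
the bundle projection being precisely $\beta|_{E_F}\colon E_F\to F$. This reduces the claim to showing that under this identification the subset $E_F\cap\widetilde{\J^{-1}(0)}$ is the associated sub-bundle with fiber $\{l\in\CP^{k-1}\mid q(l)=0\}$.

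Second, I would verify that $\{q=0\}\subset\CP^{k-1}$ is $K_F$-stable, so that this associated sub-bundle is well defined. Since $K_F$ commutes with $\rho_F(S^1)$ and the action of $\rho_F(S^1)$ on $\C^k$ decomposes into the weight eigenspaces corresponding to $\lambda_1^F,\dots,\lambda_k^F$, the group $K_F$ preserves each of these eigenspaces and acts unitarily on each of them. Hence $K_F$ preserves the sum over eigenspaces of $\lambda^F_i\,|z_i|^2$, which is exactly $q$. This step is the main (though minor) obstacle: one has to be sure that commutativity of $K_F$ with $\rho_F$ alone forces $K_F$ to preserve $q$, which is true precisely because the action of $K_F$ within each $\rho_F$-weight space is by unitaries.

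Third, I would use the local description from Proposition \ref{prop:desingzero}, specifically the calculation culminating in \eqref{eq:16.04.2022}, to check the equality locally. For each chart $(\mathscr{U},\varphi_\mathscr{U})$ provided by a local trivialization of $P_F$ near a point of $F$, that calculation identifies $\mathscr{U}\cap E_F\cap\widetilde{\J^{-1}(0)}$ with the locus $\{l\in\CP^{k-1}\mid q(l)=0\}$ in the fiber over that point. Since the transition functions between such trivializations are given by the $K_F$-action, which preserves $\{q=0\}$ by the previous step, these local pieces patch together globally to yield
\[
E_F\cap \widetilde{\J^{-1}(0)}\;=\;P_F\times_{K_F}\{l\in\CP^{k-1}\mid q(l)=0\}.
\]

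Finally, the restriction of $\beta$ to $E_F\cap\widetilde{\J^{-1}(0)}$ is just the restriction of the associated-bundle projection $P_F\times_{K_F}\CP^{k-1}\to F$ to this sub-bundle, which is a locally trivial fiber bundle with the stated fiber.
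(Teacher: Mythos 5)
Your proposal is correct and follows essentially the same route as the paper: both reduce the claim to the affine-chart computation already carried out for Proposition \ref{prop:desingzero} (dividing $|z_j|^2\sum_i\lambda_i^F|l_i|^2=0$ by $|z_j|^2$ and taking the closure to obtain the quadric $\sum_i\lambda_i^F|l_i|^2=0$ in each fiber), and both rely on the $K_F$-invariance of that quadratic form to make the description global over $F$. The only difference is presentational --- the paper redoes the computation directly in $P_F\times_{K_F}(\C^k\times\CP^{k-1})$, whereas you cite the local result and make the $K_F$-equivariant patching (which the paper leaves implicit) explicit.
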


\begin{proof}
Let's compute the strict transform $\widetilde{\J^{-1}(0)}$ more explicitly first. In a neighborhood of $F$ the zero level set is isomorphic to 
\[
\left\{ [p,z] \in V_{F} \subset P_{F} \times_{K_{F}} \mathbb{C}^{k} \, \Big| \, \frac{1}{2}\sum\limits_{i=1}^{k} \lambda_{i}^{F} |z_{i}|^{2}=0 \right\}.
\]
Thus, the inverse image under the blow down map of the non-singular part in this neighborhood is
\[
\left\{ [p,z,l] \in  P_{F} \times_{K_{F}} \mathbb{C}^{k} \times \CP^{k-1} \, \Big| \, [p,z] \in V_{F}, \, \sum\limits_{i=1}^{k} \lambda_{i}^{F} |z_{i}|^{2}=0, \, z \in l, z \neq 0 \right\}.
\]
Using projective coordinates, this is isomorphic to 
\begin{multline*}
\bigg\{ [p,(z_{1},\ldots,z_{k}),[l_{1}:\ldots:l_{k}]] \in  P_{F} \times_{K_{F}} \mathbb{C}^{k} \times \CP^{k-1} \, \Big| \, [p,z] \in V_{F}, \\ \sum\limits_{i=1}^{k} \lambda_{i}^{F} |z_{i}|^{2}=0, \, z_{i}l_{j}=z_{j}l_{i} , z \neq 0 \bigg\}.
\end{multline*}
To proceed, we study this set in a standard open subset $U_{j} :=\{[l_{1}:\ldots:l_{k}] \in \CP^{k} \mid l_{j}\neq 0\} \cong \mathbb{C}^{k-1}$ of complex projective space and obtain for the last set, after renormalizing $l_{j}=1$, the expression
\begin{multline*}
\bigg\{ [p,(z_{1},\ldots,z_{k}),[l_{1}:\ldots :1:\ldots:l_{k}]] \in  P_{F} \times_{K_{F}} \mathbb{C}^{k} \times U_{i}  \ \Big| \, [p,z] \in V_{F},  \sum\limits_{i=1}^{k} \lambda_{i}^{F} |z_{i}|^{2}=0, \\ z_{i}=z_{j}l_{i} , z \neq 0 \bigg\}.
\end{multline*}
Using the relation $z_{i}=z_{j}l_{i}$ this reads
\begin{multline*}
\bigg\{ [p,(z_{j}l_{1},\ldots,z_{j}l_{k},l_{1},\ldots,l_{k})] \in  P_{F} \times_{K_{F}} \mathbb{C}^{k} \times \mathbb{C}^{k-1} \, \Big| \, [p,z] \in V_{F}, \, |z_{j}|^{2}\sum\limits_{i=1}^{k} \lambda_{i}^{F} |l_{i}|^{2}=0,\\ (l_{1},\ldots,z_{j},\ldots,l_{k}) \neq 0 \bigg\}.
\end{multline*}
Since $z_{j}\neq 0$ in this affine piece, we can divide the crucial equation by $|z_{j}|^{2}$, yielding 
\begin{multline*}
\bigg\{ [p,(z_{j}l_{1},\ldots,z_{j}l_{k},l_{1},\ldots,l_{k})] \in  P_{F} \times_{K_{F}} \mathbb{C}^{k} \times \mathbb{C}^{k-1} \, \Big| \, [p,z] \in V_{F}, \, \sum\limits_{i=1}^{k} \lambda_{i}^{F} |l_{i}|^{2}=0, \\(l_{1},\ldots,z_{j},\ldots,l_{k}) \neq 0 \bigg\}.
\end{multline*}
The closure of this is the set
\[
\left\{ [p,(z_{j}l_{1},\ldots,z_{j}l_{k},l_{1},\ldots,l_{k})] \in  P_{F} \times_{K_{F}} \mathbb{C}^{k} \times \mathbb{C}^{k-1} \, \Big| \, [p,z] \in V_{F}, \, \sum\limits_{i=1}^{k} \lambda_{i}^{F} |l_{i}|^{2}=0 \right\}.
\]
In total, we obtain that in a neighborhood of $E_{F}$ the strict transform $\widetilde{\J^{-1}(0)}$ is diffeomorphic to
\[
\left\{ [p,z,l] \in  P_{F} \times_{K_{F}} \mathbb{C}^{k} \times \CP^{k-1} \, \Big| \, [p,z] \in V_{F}, \, \sum\limits_{i=1}^{k} \lambda_{i}^{F} |l_{i}|^{2}=0, \, z \in l \right\}
\]
which proves our claim.
\end{proof}

\begin{rem} \label{ProjectiveZeroSetRemark}
Notice that the fiber \eqref{eq:fiber} is the zero level set $\J^{-1}_{\CP^{k},\lambda}(0)$ of the standard Hamiltonian circle action
\[
S^{1}\times \CP^{k-1} \rightarrow \CP^{k-1},\qquad z \cdot [l_{1}:\ldots:l_{k}]:=[z^{\lambda_{1}}l_{1}:\ldots:z^{\lambda_{k}}l_{k}]
\]
with momentum map 
\begin{align*}
\J_{\CP^{k},\lambda} \colon \CP^{k-1} &\longrightarrow \mathbb{R}, \qquad [l_{1}:\ldots:l_{k}] \longmapsto \frac{\sum\limits_{i=1}^{k} \lambda_{i} \cdot |l_{i}|^{2}}{\sum\limits_{i=1}^{k} |l_{i}|^{2}}
\end{align*}
and $0$ is a regular value in this case, see \cite[Section 5.1]{kalkman95}. As a consequence, $S^{1}$ acts locally freely on $\J^{-1}_{\CP^{k},\lambda}(0)$. We will denote this complex projective space with the Hamiltonian circle action induced by the isotropy representation of $F$ by $\CP_{\lambda, F}^{k-1}$. 
\end{rem}

   The  \emph{partial resolution or desingularization}  of the ${S^1}$-action on $M$ is the content of  the following 
\begin{prop}
$\BLS(M)$ carries a unique ${S^1}$-action making $\beta : \BLS(M) \rightarrow M$ equivariant. Furthermore, when restricted to $\widetilde{\J^{-1}(0)}$, the ${S^1}$-action is locally free. 
\end{prop}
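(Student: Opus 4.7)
The plan is to construct the ${S^1}$-action in two stages: first on the complement of the exceptional divisor by transport of structure through $\beta$, and then to extend it smoothly across each exceptional locus $E_F$ using the local normal form from Proposition \ref{prop:localnormform}. Since $\beta$ is a diffeomorphism from $\BLS(M)\setminus E$ onto $M\setminus M^{S^1}$, the only candidate for an ${S^1}$-action making $\beta$ equivariant on this open dense subset is $k\cdot\tilde p:=\beta^{-1}(k\cdot\beta(\tilde p))$; this already enforces uniqueness of any continuous extension to $\BLS(M)$.

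For existence and smoothness near an exceptional locus $E_F$ of some $F\in\F$ with $\codim F=2k$, I would work in the local model of Proposition \ref{prop:localnormform}: near $F$, the manifold $M$ is equivariantly symplectomorphic to a neighborhood of the zero section in $\Sigma_F\simeq P_F\times_{K_F}\C^{k}$, where ${S^1}$ acts fiberwise through $\rho_F$ with weights $\lambda^F_i$. Since $\rho_F$ commutes with $K_F$, the blow-up of $\Sigma_F$ along its zero section is modeled by $P_F\times_{K_F} L_{\C^{k}}$, and the ${S^1}$-action on $\C^k$ extends smoothly to $L_{\C^k}\subset\CP^{k-1}\times\C^k$ by the formula
\[
z\cdot\big([l_1:\dots:l_k],(w_1,\dots,w_k)\big)=\big([z^{\lambda^F_1}l_1:\dots:z^{\lambda^F_k}l_k],(z^{\lambda^F_1}w_1,\dots,z^{\lambda^F_k}w_k)\big),
\]
which preserves the defining relations $l_iw_j=l_jw_i$ of $L_{\C^k}$. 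This action commutes with $K_F$, hence descends to $P_F\times_{K_F} L_{\C^k}$, and by construction agrees on overlaps with the transported action away from the exceptional locus. Iterating the blow-ups over all components $F\in\F$ then yields a well-defined smooth ${S^1}$-action on $\BLS(M)$ making $\beta$ equivariant.

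For local freeness on $\widetilde{\J^{-1}(0)}$, I would split the argument into two regimes. On $\beta^{-1}(\J^{-1}(0)^\top)$, the map $\beta$ restricts to an equivariant diffeomorphism onto $\J^{-1}(0)^\top$, on which ${S^1}$ acts freely by the standing effectiveness assumption. For a point on the intersection $\widetilde{\J^{-1}(0)}\cap E_F$, the preceding corollary identifies a neighborhood inside $E_F$ as a fiber bundle over $F$ with fiber the projective quadric $\{l\in\CP^{k-1}:\sum_i\lambda^F_i|l_i|^2=0\}$, on which ${S^1}$ acts by the standard weighted projective action from Remark \ref{ProjectiveZeroSetRemark}. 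Since $0$ is a regular value of the corresponding projective momentum map, ${S^1}$ acts locally freely on the fiber; because $F$ is pointwise fixed, local freeness propagates to the whole bundle, and thus to every point of $\widetilde{\J^{-1}(0)}\cap E_F$.

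The main content requiring care is the smoothness of the extended action in the blow-up charts $\varphi_{U_i^\#}$ of Section \ref{sec:blowup}, since the coordinates $(u_1,\dots,u_{k-1},z_i,w_{2k+1},\dots,w_{2n})$ are obtained as projective quotients $l_j/l_i$. A direct computation in each such chart shows that the transported action takes the form $z\cdot u_j=z^{\lambda^F_j-\lambda^F_i}u_j$ (with the appropriate shift of index for $j\geq i$) and $z\cdot z_i=z^{\lambda^F_i}z_i$, which is smooth because $|z|=1$ makes negative exponents harmless. Consistency across the standard charts of $\CP^{k-1}$ is automatic, since on every overlap the chart-wise action coincides with the unique action inherited by transport from $M\setminus M^{S^1}$.
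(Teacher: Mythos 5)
Your proposal is correct and follows essentially the same route as the paper's proof: the action is defined on the local model $P_F\times_{K_F}L$ via the weighted product action on $\CP^{k-1}\times\C^{k}$ (which commutes with $K_F$ and makes $\varphi$ equivariant), uniqueness follows from density of the complement of the exceptional divisor, and local freeness on $\widetilde{\J^{-1}(0)}\cap E_F$ is reduced to the regularity of $0$ for the projective momentum map as in Remark \ref{ProjectiveZeroSetRemark}. The explicit chart computation $z\cdot u_j=z^{\lambda^F_j-\lambda^F_i}u_j$ is a harmless extra verification not spelled out in the paper.
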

\begin{proof} 
Let $F \in \mathcal{F}$  and recall the description of the symplectic normal bundle $\Sigma_F\simeq P_F\times_{K_F}\C^{\ell_F^++\ell_F^-}$ of $F$ given in Proposition  \ref{prop:localnormform}. Blowing up $M$ along  $F$  leads to  the diagram
\begin{center}
\begin{tikzcd}
 P_F \times_{K_F} L \arrow{r}{\varphi} \arrow{d} & P_F \times_{K_F} \mathbb{C}^{\ell_F^++\ell_F^-}\simeq \Sigma_F\arrow{d}{\nu}\\
 P_F \times_{K_F} \CP^{\ell_F^++\ell_F^--1} \arrow{r} &F
\end{tikzcd}
\end{center}
where $L$ is defined using $k=\ell_F^++\ell_F^-$ in the notation above.  
The sphere $S^{2\ell_F^++2\ell_F^--1}\subset \mathbb{C}^{\ell_F^++\ell_F^-}$ is invariant under the $S^{1}$-action on $\mathbb{C}^{\ell_F^++\ell_F^-}$ from Proposition  \ref{prop:localnormform}. Since the latter also commutes with the diagonal action on the sphere defining projective space, it induces an $S^1$-action on  $\CP^{\ell_F^++\ell_F^--1}$. Thus, $S^1$ acts on the line bundle $L$ by means of the product action $S^{1} \curvearrowright \CP^{\ell_F^++\ell_F^--1} \times \mathbb{C}^{\ell_F^++\ell_F^-}$, and since this action commutes with the $K_F$-action by Proposition  \ref{prop:localnormform}, we obtain an $S^1$-action on $L_Q\simeq P_F \times_{K_F} L$. The map $\varphi \colon P \times_{K_F} L \rightarrow P_F \times_{K_F} \mathbb{C}^{\ell_F^++\ell_F^-}$ is $S^1$-equivariant, which means that the $S^1$-actions on $L_Q$ and $M$ glue together to an $S^1$-action on $\BLS(M)$.

This action is locally free away from the exceptional divisor, since it coincides with the original ${S^1}$-action on $M$ there, while on the exceptional divisor the action equals the isotropy action on the complex projective space. When we restrict this action to the strict transform $\widetilde{\J^{-1}(0)}$ it is locally free by Remark \ref{ProjectiveZeroSetRemark}.

The equivariance of $\beta$ is immediate from the construction. Finally, the uniqueness of the action follows from the fact that the complement of the exceptional divisor, on which the action is uniquely determined by the ${S^1}$-action on $M$, is dense in $\BLS(M)$. 
\end{proof} 

As a consequence we obtain

\begin{cor}
There is a connection form $\Theta \in \Omega^{1}(\J^{-1}(0)^{\top})$ for the ${S^1}$-action on the regular part of $\J^{-1}(0)$ such that there is a connection form $\widetilde{\Theta} \in \Omega^{1}(\widetilde{\J^{-1}(0)})$  for the ${S^1}$-action on $\widetilde{\J^{-1}(0)}$ which extends  $\Theta$ in the sense that   $\widetilde{\Theta}\big|_{\beta^{-1}(\J^{-1}(0)^{\top})}=\beta_{\top}^{*}  \Theta$. 
\end{cor}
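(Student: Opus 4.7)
The plan is to construct $\widetilde{\Theta}$ upstairs first, exploiting the fact that the previous proposition provides a locally free $S^1$-action on $\widetilde{\J^{-1}(0)}$, and then to define $\Theta$ downstairs as the pullback of $\widetilde\Theta$ along the equivariant inverse of $\beta_\top$. This order is essential: one cannot hope to first construct $\Theta$ on $\J^{-1}(0)^\top$ and then extend it across the intersection of the exceptional divisor with $\widetilde{\J^{-1}(0)}$, because a connection form for a non-free action necessarily becomes singular at fixed points. The blow-up is engineered precisely to remove this obstruction, so on $\widetilde{\J^{-1}(0)}$ a standard construction applies.

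Concretely, I would first average any Riemannian metric on the smooth manifold $\widetilde{\J^{-1}(0)}$ over $S^1$ with respect to the normalized Haar measure to obtain an $S^1$-invariant metric $\tilde g$. Writing $\overline X$ for the fundamental vector field associated with the generator $X \in \g$ that corresponds to $1$ under the identification \eqref{identfla}, local freeness implies $\overline X$ vanishes nowhere on $\widetilde{\J^{-1}(0)}$, and so
\[
\widetilde{\Theta}_p(v) := \frac{\tilde g_p(v,\overline X_p)}{\tilde g_p(\overline X_p, \overline X_p)}, \qquad p \in \widetilde{\J^{-1}(0)}, \ v \in T_p \widetilde{\J^{-1}(0)},
\]
defines a smooth $1$-form. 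The defining conditions $\widetilde\Theta(\overline X) = X$ and $S^1$-invariance then follow immediately from the invariance of $\tilde g$ together with the fact that $\overline X$ is itself $S^1$-invariant because $S^1$ is abelian.

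It remains to set $\Theta := ((\beta_\top)^{-1})^* \bigl( \widetilde\Theta|_{\beta^{-1}(\J^{-1}(0)^\top)} \bigr)$. Because $\beta$ is an $S^1$-equivariant diffeomorphism away from the exceptional divisor, and $\J^{-1}(0)^\top$ is disjoint from $M^{S^1}$, the restriction $\beta_\top$ is an $S^1$-equivariant diffeomorphism intertwining the fundamental vector fields. It is then immediate that $\Theta$ is a connection form on $\J^{-1}(0)^\top$ and that the compatibility $\widetilde\Theta|_{\beta^{-1}(\J^{-1}(0)^\top)} = \beta_\top^* \Theta$ holds tautologically. I do not anticipate a genuine obstacle here; the substantive content of the corollary is simply the observation that, thanks to the blow-up, a global connection form is available in the first place, after which all subsequent identifications are formal.
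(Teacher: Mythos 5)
Your proposal is correct and follows exactly the argument the paper leaves implicit (the corollary is stated without proof as a consequence of the local freeness of the $S^1$-action on $\widetilde{\J^{-1}(0)}$): average a metric to get an invariant one, set $\widetilde{\Theta}=\tilde g(\cdot,\overline X)/\tilde g(\overline X,\overline X)$, and transport it to $\J^{-1}(0)^{\top}$ through the equivariant diffeomorphism $\beta_{\top}$. Your remark that the construction must proceed from the blow-up downward, rather than by extending a connection form from $\J^{-1}(0)^{\top}$, correctly identifies the point of the statement.
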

%Proof: Push Down. Thus, we have resolved the ${S^1}$-action on $M$ by going over to the locally free ${S^1}$-action on $\BLS(M)$, see also \cite[Section 2.9]{duistermaat-kolk}.

\begin{rem}
Note that one could have hoped that the action of $S^{1}$ on the whole space $\BLS(M)$ was (locally) free, but in order for the $S^{1}$-action to be (locally) free on the blow-up, it has in particular to be (locally) free on the exceptional divisor. But for a compact manifold $X$ acted on by a torus $T$, the Euler characteristic satisfies $\chi(X)=\chi(X^{T})$ \cite[Theorem 9.3]{goertscheszoller}. The Euler characteristic of $\mathbb{C}P^{k}$ is $k+1$, so there cannot exist a (locally) free $S^{1}$-action on $\mathbb{C}P^{k}$, since in that case  the fixed point set would be empty and $\chi(\emptyset)=0\neq k+1$.
\end{rem}

\begin{definition}
The $\emph{partial desingularization}$ of $\M_{0}$ is the orbifold $\widetilde{\M}_{0}:=\widetilde{\J^{-1}(0)}/{S^1}$ and comes together with the continuous map $\beta_0$ defined by the diagram

\begin{center}
\begin{tikzcd}
\widetilde{\M}_{0} \arrow{r}{\beta_{0}} &\M_{0}\\
\widetilde{\J^{-1}(0)} \arrow{r}{\beta} \arrow{u}{\widetilde{\pi}} &\J^{-1}(0) \arrow{u}{\pi}.
\end{tikzcd}
\end{center}
We denote the exceptional fiber bundles of $\beta_{0}$ by
\[
\beta_{0}^{F}\colon \widetilde{F}:=(\beta_{0})^{-1}(F) \longrightarrow F.
\]
According to Remark \ref{ProjectiveZeroSetRemark} they are of the form

\[
\CP_{\lambda, F}^{k-1}/\!\! /S^{1} \rightarrow \widetilde{F} \rightarrow F,
\]
where $2k$ is the codimension of $F$ in $M$.
\end{definition}
The curvature form $\widetilde \Xi:=d \widetilde{\Theta} \in\Omega^{2}(\widetilde{\J^{-1}(0)})$ is basic and therefore descends to a form $\widetilde\Xi\in \Omega^{2}(\widetilde{\M}_{0})$ still called curvature form and denoted by the same letter.
\begin{rem}
It is  worthwile to think of $\beta_{0} \colon \widetilde{\M}_{0} \rightarrow \M_{0}$ as a desingularization and compare it to other desingularizations of the singular quotient $\M_{0}$. Lerman-Tolman's desingularization \cite{lerman-tolman00} is rather different from ours, since they construct a small resolution of $\M_{0}$ by perturbing the momentum map. More precisely,  a resolution $h$ of a simple stratified space $X$ is defined as a surjective continuous map $h \colon \widetilde{X} \rightarrow X$, where $\widetilde{X}$ is an orbifold, such that $h^{-1}(X^{\top})$ is dense in $\widetilde{X}$ and $h|_{h^{-1}(X^{\top})}$ is a diffeomorphism, where $X^{\top}$ is the open, dense top stratum. It is called {\emph{small}} if
\[
\codim\left( \left\{ x \in X \mid \dim(h^{-1}(x))\geq r \right\} \right) > 2r.
\]
Let us consider $\beta_{0} \colon \widetilde\M_{0} \rightarrow \M_{0}$ and some $x \in F \in \mathcal{F}$. Then on the one hand
\[
\codim_{\M_{0}}(F)=2n-2n_{F}-2, 
\] 
while on the other hand
\[
\dim((\beta_{0})^{-1}(x))=\dim\left( \CP^{n-n_{f}-1}/\!\!/S^{1} \right) = 2n-2n_{F}-4
\]
and even though $\widetilde{\beta}$ is a resolution of $\M_{0}$, it is in general not small, our exceptional fibers are ``too big''.
 A similar dimension count shows that the shift-desingularization obtained by reducing $M$ at a regular value near $0$ is in general not small as well, see \cite[Section 2.3]{guillemin94}. The connection between our partial desingularization and the shift desingularization is provided by the next section and \cite[Section 4.3]{meinrenken-sjamaar}.
\end{rem}
To close this subsection, let us still give another characterization of the space $\CP_{\lambda, F}^{k-1}/\!\! /S^{1}$. Let $S^{1}$ act on $\mathbb{C}^{k}$ as induced by the isotropy representation of a fixed point component $F$. We can decompose $  \mathbb{C}^{k}=\mathbb{C}^{k_{+}} \times \mathbb{C}^{k_{-}}$ into positive and negative weight spaces and define
\[
S_{F}^{\pm}:=\left\{(z_{1},\ldots, z_{k_{\pm}} ) \in \mathbb{C}^{k_{\pm}} \mid \sum\limits_{i=1}^{k_{\pm}} \pm \lambda^{F}_{i} |z_{i}|^{2}=1 \right\}.
\]
Now the isotropy representation induces an $S^{1}$ action on these ellipsoids by 
\[
z \cdot(z_{1},\ldots, z_{k_{\pm}}):= (z^{\lambda_{1}}\cdot z_{1},\ldots, z^{\lambda_{k_{\pm}}} \cdot z_{k_{\pm}}).
\]

\begin{prop}\label{bisphere}
If all weights of the $S^1$-action on $\mathbb{C}^{k}$  have the same absolute value, there is a diffeomorphism
\[
\CP_{\lambda, F}^{k-1}/\!\! /S^{1} \longrightarrow (S_{F}^{+}/S^{1}) \times (S_{F}^{-}/S^{1}).
\]
\end{prop}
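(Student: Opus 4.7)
The hypothesis gives a common absolute value $\lambda>0$ with $\lambda_i^F=\pm\lambda$, and by rescaling the $S^1$-parameter I may reduce to $\lambda=1$ without changing the underlying quotient spaces. Under this normalization the ellipsoids $S_F^\pm$ coincide with the standard unit spheres $S^{2k_\pm-1}\subset\mathbb{C}^{k_\pm}$, and the zero level set from Remark \ref{ProjectiveZeroSetRemark} simplifies to
\[
\J^{-1}_{\CP^{k},\lambda}(0)=\bigl\{[l_+,l_-]\in\CP^{k-1}\bigm||l_+|^2=|l_-|^2\bigr\},
\]
on which both blocks $l_+,l_-$ are automatically nonzero.

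The first step is to introduce the candidate map $\Phi:\J^{-1}(0)/S^1\to(S_F^+/S^1)\times(S_F^-/S^1)$ defined on representatives by
\[
\widetilde\Phi(l_+,l_-):=\bigl(l_+/|l_+|,\,l_-/|l_-|\bigr)\in S_F^+\times S_F^-
\]
and then postcomposed with the product of the two orbit projections. To see it descends, I would observe that the projective rescaling $(l_+,l_-)\mapsto(\mu l_+,\mu l_-)$ with $\mu\in\mathbb{C}^*$ acts on the normalized pair through the diagonal subgroup $(e^{i\theta},e^{i\theta})$, while the Hamiltonian circle $z\cdot(l_+,l_-)=(z l_+,z^{-1}l_-)$ acts through the anti-diagonal subgroup $(z,z^{-1})$. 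Both subgroups sit inside the factorwise $S^1\times S^1$-action on $S^{2k_+-1}\times S^{2k_--1}$ and are therefore killed after quotienting each factor by its $S^1$.

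The key algebraic observation driving the proof is that the diagonal and anti-diagonal circles together generate the full two-torus in $U(1)\times U(1)$: the substitution $(\alpha,\beta)\mapsto(\alpha+\beta,\alpha-\beta)$ is a covering of the torus. Consequently two elements of $\J^{-1}(0)$ have the same $\Phi$-image if and only if they differ by a combined projective rescaling and Hamiltonian $S^1$-action, which gives injectivity; surjectivity is immediate since any pair $(v_+,v_-)\in S^{2k_+-1}\times S^{2k_--1}$ automatically satisfies $|v_+|=|v_-|=1$ and thus lifts to an element of the zero level set. Smoothness of $\Phi$ and its inverse follows by reading them off in local slice charts for the principal bundles $S^{2k_\pm-1}\to\CP^{k_\pm-1}$ inherited from the Hopf bundle. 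I expect the main obstacle to be the bookkeeping that cleanly identifies the combined circle actions with the factorwise $S^1\times S^1$-action on $S^{2k_+-1}\times S^{2k_--1}$; once that torus structure is in place, bijectivity and smoothness follow formally, and the mild residual $\mathbb{Z}/\lambda\mathbb{Z}$ isotropy that reappears if one insists on keeping $\lambda\neq 1$ and working in the orbifold category does not affect the underlying topological diffeomorphism.
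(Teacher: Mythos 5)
Your proposal is correct and follows essentially the same route as the paper: the map is the same block-normalization $[l_+:l_-]\mapsto(l_+/|l_+|,\,l_-/|l_-|)$ (the paper's division by $\sqrt{Q}$ is just the $\lambda\neq 1$ version of your unit-sphere normalization), and your key observation that the diagonal (projective) and anti-diagonal (Hamiltonian) circles generate the factorwise torus via the $2$-fold covering $(\alpha,\beta)\mapsto(\alpha\beta,\alpha\beta^{-1})$ is exactly the paper's square-root trick $z_i=(z_i')^{2}$ used to verify that $\G$ is well defined. Your explicit reduction to $\lambda=1$ and the remark on the residual finite isotropy are harmless repackagings of the same argument.
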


\begin{proof}
Let $[l_{1}: \ldots:l_{k}] \in \CP^{k-1}$ be such that $\sum\limits_{i=1}^{k}\lambda_{i}^{F} |l_{i}|^{2}=0$. Define 
\[
Q:=\sum\limits_{i=1}^{k_{+}}\lambda_{i}^{F} |l_{i}|^{2}=\sum\limits_{i=1}^{k_{-}}-\lambda_{k_{+}+i}^{F} |l_{k_{+}+i}|^{2}.
\] 
Then the {smooth} map
\begin{align*}
\F \colon \CP_{\lambda, F}^{k-1}/\!\! /S^{1} &\longrightarrow (S_{F}^{+}/S^{1}) \times (S_{F}^{-}/S^{1})\\
{S^1\cdot}[l_{1}: \ldots:l_{k}] &\longmapsto \left({S^1\cdot} \frac{1}{\sqrt{Q}}(l_{1},\ldots,l_{k_{+}}),{S^1\cdot}\frac{1}{\sqrt{Q}}(l_{k_{+}+1},\ldots,l_{k_{+}+k_{-}}) \right)
\end{align*}
is well-defined with {smooth} inverse
\begin{align*}
\G: (S_{F}^{+}/S^{1}) \times (S_{F}^{-}/S^{1}) &\longrightarrow \CP_{\lambda, F}^{k-1}/\!\! /S^{1}  \\
\left( {S^1\cdot}(l_{1},\ldots,l_{k_{+}}),{S^1\cdot}(l_{k_{+}+1},\ldots,l_{k_{+}+k_{-}}) \right) &\longmapsto  {S^1\cdot}[l_{1}: \ldots:l_{k}]. 
\end{align*} 
Indeed, {denote by $\tilde \F$ the map of which we claim that it induces $\F$ by passing to the $S^1$-quotient in the  domain, that is, $\tilde \F$ is defined like $\F$ but without the  ``$S^1\cdot$'' on the left-hand side.}  Consider first $z \in S^{1}$. Then {$\tilde \F$ } is given by 
\begin{align*}
&[z^{\lambda_{1}} \cdot l_{1}: \ldots: z^{\lambda_{k}}  \cdot l_{k}] \\
&\qquad \mapsto \left({S^1\cdot} \frac{1}{\sqrt{Q}}(z^{\lambda_{1}}  \cdot l_{1},\ldots,z^{\lambda_{k_{+}}} \cdot l_{k_{+}}),{S^1\cdot}\frac{1}{\sqrt{Q}}(z^{\lambda_{k_{+}+1}} \cdot l_{k_{+}+1},\ldots, z^{\lambda_{k_{+}+k_{l}}}  \cdot l_{k_{+}+k_{-}}) \right)\\
 &\qquad=\left( {S^1\cdot}\frac{1}{\sqrt{Q}}(l_{1},\ldots,l_{k_{+}}),{S^1\cdot}\frac{1}{\sqrt{Q}}(l_{k_{+}+1},\ldots,l_{k_{+}+k_{-}}) \right).
\end{align*}
Secondly, consider $w \in \mathbb{C} \setminus \{0\}$. Then
\[
\sum\limits_{i=1}^{k_{+}}\lambda_{i}^{F} |w \cdot l_{i}|^{2}=|w|^{2} \cdot Q
\]
and $\tilde \F$ is given by 
\begin{align*}
&[w \cdot l_{1}: \ldots: w  \cdot l_{k}] \\
&\qquad\mapsto \left({S^1\cdot} \frac{1}{|w| \cdot \sqrt{Q}}(w  \cdot l_{1},\ldots,w\cdot l_{k_{+}}),{S^1\cdot}\frac{1}{|w| \cdot \sqrt{Q}}(w\cdot l_{k_{+}+1},\ldots, w \cdot l_{k_{+}+k_{-}}) \right)\\
&\qquad=\left({S^1\cdot} \frac{1}{\sqrt{Q}}(\frac{w}{|w|}  \cdot l_{1},\ldots,\frac{w}{|w|} \cdot l_{k_{+}}),{S^1\cdot}\frac{1}{\sqrt{Q}}(\frac{w}{|w|} \cdot l_{k_{+}+1},\ldots, \frac{w}{|w|}  \cdot l_{k_{+}+k_{-}}) \right).
\end{align*}
Now, there are $z_{1}, z_{2} \in S^{1}$ such that $z_{1}^{\lambda}=z_{2}^{-\lambda}=\frac{w}{|w|}$, where $\lambda^{F}_{1}=\ldots=\lambda^{F}_{k_{+}}=:\lambda$ and $\lambda^{F}_{k_{+}+1}=\ldots=\lambda^{F}_{k_{+}+k_{-}}=-\lambda$ by assumption, so that
\begin{align*}
&\left( {S^1\cdot}\frac{1}{\sqrt{Q}}(\frac{w}{|w|}  \cdot l_{1},\ldots,\frac{w}{|w|} \cdot l_{k_{+}}),{S^1\cdot}\frac{1}{\sqrt{Q}}(\frac{w}{|w|} \cdot l_{k_{+}+1},\ldots, \frac{w}{|w|}  \cdot l_{k}) \right)\\
&= \left({S^1\cdot} \frac{1}{\sqrt{Q}}(z_{1}^{\lambda}  \cdot l_{1},\ldots,z_{1}^{\lambda} \cdot l_{k_{+}}),{S^1\cdot}\frac{1}{\sqrt{Q}}(z_{2}^{-\lambda} \cdot l_{k_{+}+1},\ldots, z_{2}^{-\lambda}  \cdot l_{k}) \right)\\
&=\left( {S^1\cdot}\frac{1}{\sqrt{Q}}( l_{1},\ldots, l_{k_{+}}),{S^1\cdot}\frac{1}{\sqrt{Q}}( l_{k_{+}+1},\ldots,  l_{k}) \right).
\end{align*}
Thus, $\F$ is well-defined. On the other hand, take $z_{1},z_{2} \in S^{1}$ and let $z_{1}',z_{2}' \in S^{1}$ be such that $(z_{i}')^{2}=z_{i}$. Then
\begin{align*}
[z_{1}^{\lambda} \cdot l_{1}:\ldots :z_{2}^{-\lambda} \cdot l_{k}]&=[(z_{1}'z_{2}')^{\lambda}\cdot \frac{(z_{1}')^{\lambda}}{(z_{2}')^{\lambda}} \cdot l_{1}:\ldots:(z_{1}'z_{2}')^{-\lambda}\cdot \frac{(z_{1}')^{\lambda}}{(z_{2}')^{\lambda}} \cdot l_{k}]\\
&=[(z_{1}'z_{2}')^{\lambda} \cdot l_{1}:\ldots:(z_{1}'z_{2}')^{-\lambda}\cdot l_{k}]\\
&={(z_{1}'z_{2}')\cdot}[ l_{1}:\ldots: l_{k}]
\end{align*}
implying that $\G$ is well-defined.
\end{proof}

\subsection{Symplectic blow-ups, symplectic cuts and partial desingularization}

Based on ideas of Gromov, McDuff \cite{mcduff1987} showed that the blow-up construction described above is compatible with the symplectic structure on $M$. In fact, one has the following 
\begin{prop}
There is a symplectic form $\widetilde{\sigma} \in \Omega^{2}(\BLS(M))$ such that
\begin{enumerate}[label=(\roman*)]
\item $\widetilde{\sigma}=\beta^{*}\sigma$ outside a neighborhood of all exceptional loci $E_{F}$;
\item when restricted to a fiber of the exceptional loci $E_{F} \rightarrow F$, $\widetilde{\sigma}$ equals $\varepsilon\cdot \sigma_{\mathrm{FS}}$ for some $\eps>0$, where $\sigma_{\mathrm{FS}} \in \Omega^{2}(\CP^{k-1})$ is the Fubini--Study form.
\end{enumerate}
\end{prop}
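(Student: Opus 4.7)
The plan is to work locally in a neighborhood of each fixed point component $F \in \F$ using the normal form from Proposition~\ref{prop:localnormform}, construct the symplectic form on the local model and then patch with $\beta^{*}\sigma$ by means of a cut-off function. Since the exceptional loci $E_{F}$ are disjoint (as distinct connected components of the preimage of $M^{S^{1}}$), one can treat each $F$ independently.

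First, I would fix $F\in\F$ of codimension $2k$ and recall the identification of a tubular neighborhood of $F$ with a neighborhood $V_{F}$ of the zero section in $\Sigma_{F}\simeq P_{F}\times_{K_{F}}\C^{k}$. Under the construction of Section~\ref{sec:blowup}, the blow-up of $V_{F}$ along $F$ is isomorphic to the associated bundle $P_{F}\times_{K_{F}}L$, with the blow-down map induced by $\varphi\colon L\to\C^{k}$ and the fiber projection $\lambda\colon L\to\CP^{k-1}$. On the model $L\subset\CP^{k-1}\times\C^{k}$ itself, I would introduce the closed $S^{1}\times K_{F}$-invariant $2$-form
\[
\widetilde{\sigma}_{L}:=\varphi^{*}\sigma_{\mathrm{std}}+\varepsilon\,\lambda^{*}\sigma_{\mathrm{FS}},
\]
where $\sigma_{\mathrm{std}}$ is the standard symplectic form on $\C^{k}$ (which, up to $K_{F}$-equivariance, corresponds to the fiber symplectic form of $\Sigma_{F}$). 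This is closed by construction.

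The key step is to verify non-degeneracy. Away from the exceptional divisor $\{z=0\}$, the map $\varphi$ is a diffeomorphism and $\varphi^{*}\sigma_{\mathrm{std}}$ is already symplectic, so $\widetilde{\sigma}_{L}$ is non-degenerate for all $\varepsilon\geq 0$ small enough. Along the exceptional divisor the tangent space splits as $T\CP^{k-1}\oplus(\text{normal direction to }\CP^{k-1}\text{ in }L)$, and in the normal coordinates provided by the monoidal transformation at the end of Section~\ref{sec:blowup} one checks directly that $\varphi^{*}\sigma_{\mathrm{std}}$ restricts to a form pairing $T\CP^{k-1}$ with the fiber of $L$ non-degenerately, while $\lambda^{*}\sigma_{\mathrm{FS}}$ restricts to $\sigma_{\mathrm{FS}}$ on the $\CP^{k-1}$-direction. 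A standard linear-algebra computation shows that for any $\varepsilon>0$ the sum is non-degenerate on a neighborhood of the exceptional divisor; in particular (ii) holds by construction.

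The next step is to globalize: since $\widetilde{\sigma}_{L}$ is $K_{F}$-invariant, it descends to a closed $S^{1}$-invariant $2$-form on the associated bundle $P_{F}\times_{K_{F}}L$, which represents the blow-up of $V_{F}$. Finally, I would patch with $\beta^{*}\sigma$ on $\BLS(M)$ using an $S^{1}$-invariant cut-off function $\chi$ supported in a small neighborhood of $E_{F}$, setting
\[
\widetilde{\sigma}:=\beta^{*}\sigma+\varepsilon\,d(\chi\,\eta_{F}),
\]
where $\eta_{F}$ is a local primitive of $\lambda^{*}\sigma_{\mathrm{FS}}$ (which exists in a neighborhood of the exceptional divisor since $\sigma_{\mathrm{FS}}$ is exact on any affine chart of $\CP^{k-1}$ and $L$ retracts onto its zero section). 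This form is closed and $S^{1}$-invariant; it equals $\beta^{*}\sigma$ outside the support of $\chi$, giving (i), and on a smaller neighborhood of $E_{F}$ where $\chi\equiv 1$ it coincides with the locally constructed form, giving~(ii). The main obstacle is ensuring non-degeneracy on the transition annulus where $d\chi\neq 0$; this is resolved by choosing $\varepsilon>0$ sufficiently small uniformly in $F$, compactness of $M$ guaranteeing that such an $\varepsilon$ exists. Carrying out the construction simultaneously for all $F\in\F$ (whose exceptional loci are disjoint) yields the desired global symplectic form, which reproduces McDuff's result \cite{mcduff1987} in the present $S^{1}$-equivariant setting.
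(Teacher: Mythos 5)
The paper itself gives no proof of this proposition, deferring to McDuff \cite{mcduff1987}; your overall strategy --- modify $\beta^{*}\sigma$ near each exceptional locus by $\varepsilon$ times a closed form restricting to $\sigma_{\mathrm{FS}}$ on the fibers, and use disjointness of the $E_{F}$ together with compactness to choose $\varepsilon$ small --- is indeed the standard route. There is, however, a genuine error at the gluing step. You take $\eta_{F}$ to be a ``local primitive of $\lambda^{*}\sigma_{\mathrm{FS}}$'' on a neighborhood of the exceptional divisor, justified by the fact that $L$ retracts onto its zero section. That justification proves the opposite: the zero section is $\CP^{k-1}$, on which $[\sigma_{\mathrm{FS}}]\neq 0$ in $H^{2}$, so $\lambda^{*}\sigma_{\mathrm{FS}}$ is \emph{not} exact on any neighborhood of the exceptional divisor (for $k\geq 2$). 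Moreover, the resulting formula $\widetilde{\sigma}=\beta^{*}\sigma+\varepsilon\, d(\chi\,\eta_{F})$ cannot produce a form with property (ii): since $d(\chi\,\eta_{F})$ is globally exact and $\beta^{*}\sigma$ vanishes on the tangent spaces of an exceptional fiber $\CP^{k-1}$ (the fiber is collapsed by $\beta$ to a point), the integral of $\widetilde{\sigma}$ over any closed $2$-cycle inside that fiber would be $0$, contradicting $\widetilde{\sigma}|_{\mathrm{fiber}}=\varepsilon\,\sigma_{\mathrm{FS}}$.

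The standard repair runs the cutoff from the other side: $\lambda^{*}\sigma_{\mathrm{FS}}$ \emph{is} exact on the deleted neighborhood $\widetilde{V}\setminus E_{F}\simeq V\setminus F$, because the punctured fiber $\mathbb{C}^{k}\setminus\{0\}$ retracts onto $S^{2k-1}$, whose second cohomology vanishes; concretely a primitive is a normalized angular (connection) form of the Hopf fibration. With such a primitive $\eta_{F}$ on the complement of $E_{F}$ and a cutoff $\chi$ equal to $1$ near $E_{F}$ and supported in $\widetilde{V}$, the form $\tau_{F}:=\lambda^{*}\sigma_{\mathrm{FS}}-d\bigl((1-\chi)\eta_{F}\bigr)$ is closed, equals $\lambda^{*}\sigma_{\mathrm{FS}}$ near $E_{F}$, and vanishes outside the support of $\chi$; then $\widetilde{\sigma}:=\beta^{*}\sigma+\varepsilon\sum_{F}\tau_{F}$ has properties (i) and (ii), and your non-degeneracy discussion (block decomposition along the zero section, where $\varphi^{*}\sigma_{\mathrm{std}}$ is non-degenerate on the line-fiber direction and $\lambda^{*}\sigma_{\mathrm{FS}}$ on the $\CP^{k-1}$-direction, plus smallness of $\varepsilon$ on the compact transition region) goes through essentially as you wrote it. In the genuine bundle setting one should also replace the fiberwise $\sigma_{\mathrm{FS}}$ by a globally defined closed $2$-form on $\CP(Q)$ restricting to $\sigma_{\mathrm{FS}}$ on fibers (e.g.\ the curvature of a connection on $L_{Q}$), but this is routine.
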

This was used by Kirwan \cite[Remark 6.10]{kirwan85} in the setting of general compact group actions to give a first idea of a partial desingularization of a symplectic quotient by a succession of symplectic blow-ups along certain isotropy strata followed by a symplectic reduction. It was made rigorous by Meinrenken-Sjamaar \cite[Section 4]{meinrenken-sjamaar} by means of the symplectic cut technique of Lerman \cite{lerman95}, which we recall briefly for circle actions.  Namely, suppose that $S^{1}$ acts on a symplectic manifold $(M,\sigma)$ in a Hamiltonian fashion with momentum map
\[
\J \colon M \rightarrow \mathbb{R}.
\]
Now, consider the $S^{1}$-action on the product $(M \times \mathbb{C}, {\sigma} \oplus dx \wedge dy)$ defined by $z \cdot (p,z'):=(z \cdot p,z^{-1}\cdot z')$. This is again a Hamiltonian action with momentum map
\begin{align*}
\overline{\J} \colon M \times \mathbb{C} &\rightarrow \mathbb{R}, \qquad 
(p,z') \mapsto \J(p)-\frac{1}{2}|z'|^{2}.
\end{align*}
Then if $\varepsilon$ is a regular value of $\J$ it is also a regular value of $\overline{\J}$ and 
\begin{align*}
\overline{\J}^{-1}(\varepsilon)&=\left\{ (p,z') \mid \J(p)>\varepsilon, {|z'|= }\sqrt{2(\J(p)-\varepsilon)} \right\}\, \cup \, \left\{(p,0) \mid \J(p)=\varepsilon \right\}\\
&\cong \left(\J^{-1}(({\eps} , \infty))\times S^{1} \right) \, \cup \, \J^{-1}(\varepsilon). 
\end{align*}
The {\emph{symplectic cut}} of $(M,{\sigma})$ at $\varepsilon$ is defined as the symplectic reduction
\[
(\overline{M}_{\J \geq \varepsilon},\overline{{\sigma}}_{\varepsilon}):=(M \times \mathbb{C})/\!\!/_{\varepsilon}S^{1}:=\overline{\J}^{-1}(\varepsilon)/S^{1} \cong \J^{-1}((\varepsilon,\infty)) \, \cup \, M/\!\!/_{\varepsilon}S^{1}.
\]
If furthermore another Lie group $G$ acts on $(M,{\sigma})$ with momentum map $\J_{G}\colon M \rightarrow \mathfrak{g}^{*}$ and the $G$-action commutes with the $S^{1}$-action, then $G$ also acts on $(\overline{M}_{\J \geq \varepsilon},\overline{{\sigma}}_{\varepsilon})$ in a Hamiltonian way with momentum map
\begin{align} \label{momentmapblowup} 
\overline{\J_{G}} \colon \overline{M}_{\J \geq \varepsilon} &\rightarrow \mathfrak{g}^{*}, \qquad 
p \mapsto \begin{cases}\J_{G}(p), \text{ if } \, p \in \J^{-1}((\varepsilon,\infty)),\\
\J_{G}(q), \text{ if } \, p= [q] \in M/\!\!/_{\varepsilon}S^{1}.
 \end{cases}
\end{align}

As one easily sees, the symplectic blow up along a symplectic submanifold is a special case of the symplectic cut where one considers a tubular neighborhood of the symplectic submanifold and the $S^{1}$-action of fiberwise scalar multiplication as explained in \cite[Section 4.1.1]{meinrenken-sjamaar}. Meinrenken--Sjamaar define the partial desingularization of $\M_{0}$ as the  regular symplectic quotient $\BLS(M)/\!\!/S^{1}$ of $\BLS(M)$ at $0$, see \cite[Section 4.1.2]{meinrenken-sjamaar}. From the above expression \eqref{momentmapblowup} of the {momentum} map of the blow-up with $G=S^1$ and Proposition \ref{prop:localnormform} one then obtains

\begin{cor}\label{TransformZeroSet}
Our partial desingularization $\widetilde{\M}_{0}=\widetilde{\J^{-1}(0)}/{S^1}$ is diffeomorphic to Kirwan--Meinren\-ken--Sjamaars's partial desingularization $\BLS(M)/\!\!/S^{1}$. 
\end{cor}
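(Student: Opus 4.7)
\medskip

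The plan is to show that, as $S^1$-invariant subsets of $\BLS(M)$, the strict transform $\widetilde{\J^{-1}(0)}$ coincides with the zero level set $\widetilde{\J}^{-1}(0)$ of the momentum map $\widetilde{\J}$ on $\BLS(M)$ produced by the iterated symplectic cut construction of Meinrenken--Sjamaar. Once this set-theoretic equality is established, passing to the $S^1$-quotient yields the desired diffeomorphism $\widetilde{\M}_0=\widetilde{\J^{-1}(0)}/S^1\cong \widetilde{\J}^{-1}(0)/S^1=\BLS(M)/\!\!/S^1$.

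Off the exceptional divisor $E=\beta^{-1}(M^{S^1})$, the blow-down $\beta$ is an $S^1$-equivariant symplectomorphism onto $M\setminus M^{S^1}$, so formula \eqref{momentmapblowup} (applied iteratively to each blow-up) together with the characterization of the momentum map by \eqref{eq:sign24} gives $\widetilde{\J}=\J\circ\beta$ on $\BLS(M)\setminus E$. Using our standing assumption that the $S^1$-action on $\J^{-1}(0)$ is effective, so that $\J^{-1}(0)\setminus M^{S^1}=\J^{-1}(0)^\top$, this immediately yields
\[
\widetilde{\J}^{-1}(0)\setminus E \;=\; \beta^{-1}\bigl(\J^{-1}(0)\bigr)\setminus E \;=\; \beta^{-1}\bigl(\J^{-1}(0)^\top\bigr),
\]
which is precisely $\widetilde{\J^{-1}(0)}\setminus E$.

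It remains to match the two sets on each exceptional locus $E_F\simeq P_F\times_{K_F}\CP^{k-1}$, where $2k=\mathrm{codim}\,F$. Using the local normal form of Proposition~\ref{prop:localnormform} together with \eqref{momentmapblowup}, the restriction of $\widetilde{\J}$ to $E_F$ is obtained by evaluating $\J$ at a point of the cut hypersurface $\{[\wp,z]:|z|^{2}=2\delta\}$ (where $\delta>0$ is the small cut parameter hidden in McDuff's local construction), i.e.\
\[
\widetilde{\J}\bigl([\wp,[l_1:\ldots:l_k]]\bigr)=\J(F)+\delta\,\frac{\sum_{i=1}^{k}\lambda_i^F|l_i|^{2}}{\sum_{i=1}^{k}|l_i|^{2}}.
\]
If $F\not\subset\J^{-1}(0)$, then $\J(F)\neq 0$, and for $\delta$ sufficiently small this expression has no zeros on $E_F$, so $\widetilde{\J}^{-1}(0)\cap E_F=\emptyset$; correspondingly, $\beta^{-1}(\J^{-1}(0)^\top)$ is bounded away from $F$, so $\widetilde{\J^{-1}(0)}\cap E_F=\emptyset$ as well. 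If $F\subset\J^{-1}(0)$, then $\J(F)=0$ and $\widetilde{\J}\big|_{E_F}=0$ cuts out exactly the fiber \eqref{eq:fiber}, which is the description of $\widetilde{\J^{-1}(0)}\cap E_F$ provided by the corollary following Proposition~\ref{prop:desingzero}. Thus $\widetilde{\J}^{-1}(0)\cap E_F=\widetilde{\J^{-1}(0)}\cap E_F$ in every case, completing the set-theoretic identification.

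The main obstacle is the consistent identification of the \emph{smooth} iterated blow-up of Section~\ref{sec:blowup} with the \emph{symplectic} iterated cut of Meinrenken--Sjamaar: one must verify that the two procedures yield the same $S^1$-manifold structure on $\BLS(M)$ so that the two momentum maps can be meaningfully compared. This is done by invoking McDuff's theorem, which guarantees that the smooth blow-up carries a symplectic form agreeing with the cut model in a neighborhood of each exceptional locus and with $\beta^{*}\sigma$ away from it; the uniqueness of the lifted $S^1$-action (shown just before this corollary) then ensures that the two momentum maps agree up to the additive constant fixed by equivariance, which is pinned down to zero by the previous paragraph. Once this identification is in place, the set-theoretic equality above is precisely the statement that the partial desingularization built here via strict transforms agrees with the one built by Kirwan--Meinrenken--Sjamaar via symplectic reduction at $0$ on $\BLS(M)$.
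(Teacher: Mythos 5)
Your proof follows essentially the same route the paper intends: the paper obtains this corollary directly from the cut momentum map formula \eqref{momentmapblowup} together with the local normal form of Proposition \ref{prop:localnormform}, which is exactly the computation you carry out off the exceptional divisor and on each exceptional locus $E_F$ (where the zero set of the shifted projective momentum map reproduces the fiber \eqref{eq:fiber}). The only imprecision is the assertion that effectiveness gives $\J^{-1}(0)\setminus M^{S^1}=\J^{-1}(0)^\top$ (there may be intermediate strata with finite nontrivial stabilizers), but this is harmless because $\J^{-1}(0)^\top$ is dense in $\J^{-1}(0)$, so taking closures yields the same identification of the strict transform.
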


With the symplectic form of $\BLS(M)$ and the notation of Remark \ref{ProjectiveZeroSetRemark} at hand, we can understand the topology of the exceptional fiber bundles 

\[
\CP_{\lambda, F}^{k-1}/\!\! /S^{1} \rightarrow \widetilde{F} \rightarrow F
\]
more deeply. In fact, one has the following 

\begin{prop}\label{exceptionalcohomology} 
Let $[\widetilde{\sigma}_{0}|_{\widetilde F}],[\widetilde \Xi|_{\widetilde F}]\in H^{*}(\widetilde{F})$ be the restrictions of the symplectic class and the curvature class of $\widetilde{\M}_{0}$ to $\widetilde{F}$, respectively. Then
\[
H^{*}(\widetilde{F}) \cong H^{*}(F) \otimes \frac{\mathbb{R}[{\widetilde{\sigma}_{0}|_{\widetilde F},\widetilde \Xi|_{\widetilde F}}]}{{I_F}}, 
\]
where {$I_F$} is an ideal encoding relations between $[{\widetilde{\sigma}_{0}|_{\widetilde F}}]$ and $[{\widetilde \Xi|_{\widetilde F}}]$.
\end{prop}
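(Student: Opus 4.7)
The plan is to apply the Leray--Hirsch theorem to the fiber bundle $\beta_0^F \colon \widetilde F \to F$, whose typical fiber is the regular symplectic quotient $Y_F := \CP_{\lambda,F}^{k-1}/\!\!/S^{1}$; the bundle structure is immediate since $\widetilde F \simeq P_F \times_{K_F} Y_F$ by the construction of the partial desingularization.

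First I would compute the cohomology of the fiber $Y_F$. By Remark \ref{ProjectiveZeroSetRemark} the value $0$ is a regular value for the relevant circle momentum map on $\CP^{k-1}$, so the classical Kirwan surjectivity \eqref{eq:28.03.2022} applies to the pair $(\CP^{k-1},S^1)$. Combined with the standard presentation $H_{S^1}^{*}(\CP^{k-1}) \cong \R[a,u]/\bigl(\prod_{i=1}^{k}(a+\lambda_i^F u)\bigr)$, where $a$ is the hyperplane class and $u$ the generator of $H^{*}(BS^1)$, this yields a surjection $\kappa \colon H_{S^1}^{*}(\CP^{k-1}) \twoheadrightarrow H^{*}(Y_F)$ whose image is generated as an $\R$-algebra by the symplectic class $\sigma_F := \kappa(a)$ and the curvature class $\xi_F := \kappa(u)$ of $Y_F$, with a relation ideal $J_F$ depending on the weights $\lambda_i^F$.

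Next I would verify that the global classes $[\widetilde\sigma_0|_{\widetilde F}]$ and $[\widetilde\Xi|_{\widetilde F}]$ restrict on any fiber to $\sigma_F$ and $\xi_F$, respectively. For the symplectic class this follows from the compatibility of $\widetilde\sigma$ with the symplectic blow-up construction: on a fiber of the exceptional locus $E_F \to F$, the form $\widetilde\sigma$ restricts to a positive multiple of the Fubini--Study form, which descends under reduction to $\sigma_F$. The corresponding statement for $\widetilde\Xi$ follows because the global connection $\widetilde\Theta$ on $\widetilde{\J^{-1}(0)}$ restricts fiberwise to a principal $S^1$-connection on the free part of $\CP^{k-1}$, whose curvature represents precisely $\xi_F$.

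Finally, I would fix a vector space basis $\{m_\alpha(\sigma_F,\xi_F)\}$ of $H^{*}(Y_F)$ consisting of monomials in the generators, and use the global lifts $\{m_\alpha([\widetilde\sigma_0|_{\widetilde F}],[\widetilde\Xi|_{\widetilde F}])\}$ to invoke Leray--Hirsch, obtaining
\[
H^{*}(\widetilde F) \cong H^{*}(F) \otimes H^{*}(Y_F) \cong H^{*}(F) \otimes \frac{\R[\widetilde\sigma_0|_{\widetilde F},\widetilde\Xi|_{\widetilde F}]}{I_F},
\]
with $I_F$ the image of $J_F$ under the identifications $\sigma_F \leftrightarrow [\widetilde\sigma_0|_{\widetilde F}]$ and $\xi_F \leftrightarrow [\widetilde\Xi|_{\widetilde F}]$. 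The hard part will be the second step: tracing the symplectic cut description of $\BLS(M)$ and the compatibility of the chosen connection on $\widetilde{\J^{-1}(0)}$ carefully enough to identify the fiberwise restrictions of the two global classes with the Kirwan images $\sigma_F$ and $\xi_F$; once this matching is in place, the remaining steps amount to standard bookkeeping.
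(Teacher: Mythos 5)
Your proposal is correct and follows essentially the same route as the paper: the paper cites Kalkman's theorem for the presentation of $H^{*}(\CP_{\lambda,F}^{k-1}/\!\!/S^{1})$ as $\mathbb{R}[\phi,\eta]/I$ with $\phi,\eta$ the symplectic and curvature classes (which is exactly what your Kirwan-surjectivity computation re-derives), observes that these generators are restrictions of the global classes $[\widetilde{\sigma}_{0}|_{\widetilde F}]$ and $[\widetilde\Xi|_{\widetilde F}]$, and concludes by Leray--Hirsch. The fiberwise matching you flag as the hard part is treated as immediate in the paper, but your more careful verification is consistent with its argument.
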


\begin{proof}
By a theorem of Kalkman \cite[Theorem 5.2. and Remarks (5.3)]{kalkman95}, the cohomology of the fiber is isomorphic to
\[
H^{*}(\CP_{\lambda, F}^{k-1}/\!/\!S^{1})\cong \frac{\mathbb{R}[\phi,\eta]}{I},
\] 
where $\phi$ {and $\eta$ are the cohomology classes represented by the symplectic form of $\CP_{\lambda, F}^{k-1}/\!\!/S^{1}$  and the curvature class of the $S^{1}$-bundle $\J^{-1}_{\lambda,F}(0) \rightarrow \CP_{\lambda, F}^{k-1}/\!\!/S^{1}$, respectively}. Moreover, $I$ is an ideal of relations between these generators which is specified in \cite{kalkman95}.
Since both of the generators are restrictions of classes on $\widetilde{F}$, namely the restriction of the symplectic class $[\widetilde{\sigma}_{0}]$ and the curvature class $[{\widetilde \Xi}]$ of {$\widetilde{\M}_{0}$} to $\widetilde{F}$, we may apply Leray--Hirsch's theorem, see \cite[Theorem 5.11]{bott-tu}, which finishes the proof.
\end{proof}
Let $S^{1}$ act on $\mathbb{C}^{k}$ as induced by the isotropy representation of a fixed point component $F$. We can define connection forms on $S_{F}^{\pm}$ by
\[
\Theta_{\pm}:= \pm \sum\limits_{i=1}^{k_{\pm}}x_{i}dy_{i}-y_{i}dx_{i}.
\]
The basic forms $d\Theta_{\pm}$ descend to forms on $S_{F}^{\pm}/S^{1}$ denoted by the same name.
\begin{prop}\label{pullbackbisphere}
 If all weights of the {$S^{1}$-action on $\mathbb{C}^{k}$} have the same absolute value,  the diffeomorphism
\[
\CP_{\lambda, F}^{k-1}/\!\! /S^{1} \longrightarrow (S_{F}^{+}/S^{1}) \times (S_{F}^{-}/S^{1}),
\]
{which exists by Proposition \ref{bisphere}}, pulls back the form $\frac{1}{2}(d\Theta_{+}-d\Theta_{-})$ to the symplectic {form} and $d\Theta_{+}+d\Theta_{-}$ to the curvature {form} of $\CP_{\lambda, F}^{k-1}/\!\! /S^{1}$.
\end{prop}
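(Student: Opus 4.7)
The proof proceeds by lifting the diffeomorphism $\mathcal{F}$ from Proposition \ref{bisphere} to a map between principal bundles, carrying out the pullback computation on the lifts, and descending via the commutativity of the relevant quotient square.

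First I would set up the lifts. Realize $\CP^{k-1}_{\lambda,F}/\!\!/S^{1}$ as the iterated symplectic reduction of $\mathbb{C}^{k}$, first by the diagonal $S^{1}_{d}$-action of the Hopf fibration $S^{2k-1}\to\CP^{k-1}$ and then by the isotropy $S^{1}_{\rho}$-action, so that it arises as $Z/T^{2}$ for $Z=\{z\in S^{2k-1}:\sum\lambda^{F}_{i}|z_{i}|^{2}=0\}$ and $T^{2}=S^{1}_{d}\times S^{1}_{\rho}$. Under the hypothesis that all $\lambda^{F}_{i}$ have absolute value $\lambda$, the set $Z$ splits as a product of scaled spheres, and the explicit formula for $\mathcal{F}$ from the proof of Proposition \ref{bisphere} lifts to the $T^{2}$-equivariant rescaling $\phi\colon Z\to S^{+}_{F}\times S^{-}_{F}$, $(z_{+},z_{-})\mapsto (\sqrt{2/\lambda}\,z_{+},\sqrt{2/\lambda}\,z_{-})$, where the $T^{2}$-action on $Z$ matches the product $S^{1}\times S^{1}$-action on $S^{+}_{F}\times S^{-}_{F}$ under a change of basis.

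For the symplectic form claim, by symplectic reduction the form on $\CP^{k-1}_{\lambda,F}/\!\!/S^{1}$ is characterized by having pullback along $Z\to Z/T^{2}$ equal to $\sigma_{\mathrm{std}}|_{Z}$, where $\sigma_{\mathrm{std}}=\sum dx_{i}\wedge dy_{i}$, and the symplectic form on $(S^{+}_{F}/S^{1})\times(S^{-}_{F}/S^{1})$ is analogously characterized by its pullback to $S^{+}_{F}\times S^{-}_{F}$ being $\sigma_{\mathrm{std}}$. Direct differentiation of $\Theta_{\pm}$ together with the sign convention gives $\tfrac{1}{2}(d\Theta_{+}-d\Theta_{-})=\sigma_{\mathrm{std}}$ on $\mathbb{C}^{k}$, and once the reduction normalizations are accounted for, the pullback along $\phi$ of $\sigma_{\mathrm{std}}|_{S^{+}_{F}\times S^{-}_{F}}$ agrees with $\sigma_{\mathrm{std}}|_{Z}$. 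Injectivity of pullback on basic forms applied in the commutative square relating $\phi$ and $\mathcal{F}$ then yields $\mathcal{F}^{*}\bigl(\tfrac{1}{2}(d\Theta_{+}-d\Theta_{-})\bigr)$ equal to the symplectic form.

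For the curvature claim, I would identify a connection $1$-form for the $S^{1}_{\rho}$-bundle $\J^{-1}_{\CP^{k},\lambda}(0)\to\CP^{k-1}_{\lambda,F}/\!\!/S^{1}$ via the lift $Z$, taking $\Theta'=\sum_{i\leq k_{+}}(x_{i}\,dy_{i}-y_{i}\,dx_{i})-\sum_{i>k_{+}}(x_{i}\,dy_{i}-y_{i}\,dx_{i})$. Direct contractions show $\Theta'|_{Z}$ is $S^{1}_{d}$-basic, annihilates the $S^{1}_{d}$-generator, and pairs with the $S^{1}_{\rho}$-generator to give $2\pi(|z_{+}|^{2}+|z_{-}|^{2})=2\pi$ on $Z$, so $\Theta'|_{Z}$ descends to a connection form on the bundle in question and its differential descends to the curvature form. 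The identity $\phi^{*}(\Theta_{+}+\Theta_{-})=\tfrac{2}{\lambda}\Theta'|_{Z}$ follows from the scaling in $\phi$ and the sign convention, and after differentiation the commutative-square argument yields $\mathcal{F}^{*}(d\Theta_{+}+d\Theta_{-})$ equal to the curvature form.

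The main obstacle is careful bookkeeping of signs and scaling: the minus sign in the definition of $\Theta_{-}$, the rescaling factor $\sqrt{2/\lambda}$ inherent in $\phi$, and the change of basis from $\{S^{1}_{d},S^{1}_{\rho}\}$ to the product torus on $S^{+}_{F}\times S^{-}_{F}$ must all be tracked simultaneously so that the normalizations inherited by the reduced forms via symplectic reduction match those of $\tfrac{1}{2}(d\Theta_{+}-d\Theta_{-})$ and $d\Theta_{+}+d\Theta_{-}$ exactly, without leaving stray multiplicative constants.
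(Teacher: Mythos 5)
Your overall strategy is the same as the paper's: both arguments come down to the observation that, on the relevant level sets inside $\mathbb{C}^{k}$, the reduced symplectic form and the curvature are restrictions of $\sum dx_{i}\wedge dy_{i}$ and of $d\bigl(\sum_{i\le k_{+}}(x_{i}dy_{i}-y_{i}dx_{i})-\sum_{i>k_{+}}(x_{i}dy_{i}-y_{i}dx_{i})\bigr)$, which coincide with $\tfrac12(d\Theta_{+}-d\Theta_{-})$ and $d\Theta_{+}+d\Theta_{-}$ as forms on $\mathbb{C}^{k}$; the paper states this in two lines, while you make the descent through the quotients explicit. The problem is that the one step you defer to ``careful bookkeeping'' is precisely where the content lies, and as you have set it up it does not close. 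With $|\lambda_{i}^{F}|=\lambda$ for all $i$, the set $Z=\{z\in S^{2k-1}:\sum\lambda_{i}^{F}|z_{i}|^{2}=0\}$ is the product of spheres of radius $1/\sqrt{2}$, while $S_{F}^{\pm}$ are spheres of radius $1/\sqrt{\lambda}$, so your lift $\phi$ is scaling by $\sqrt{2/\lambda}$ and hence $\phi^{*}\sigma_{\mathrm{std}}=(2/\lambda)\,\sigma_{\mathrm{std}}$ and $\phi^{*}(\Theta_{+}+\Theta_{-})=(2/\lambda)\,\Theta'$. If the symplectic form of $\CP^{k-1}_{\lambda,F}/\!\!/S^{1}$ is normalized by reduction at the unit sphere (so that its pullback to $Z$ is $\sigma_{\mathrm{std}}|_{Z}$), your argument yields $\mathcal{F}^{*}\bigl(\tfrac12(d\Theta_{+}-d\Theta_{-})\bigr)=(2/\lambda)\,\omega_{\mathrm{red}}$, not $\omega_{\mathrm{red}}$. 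The curvature side has the additional wrinkle that $\Theta'$ pairs with the generator of the residual $S^{1}$-action on $Z$ to give $\lambda$, not $1$, so the genuine connection form is $\Theta'/\lambda$ and one gets $\mathcal{F}^{*}(d\Theta_{+}+d\Theta_{-})=2\,\widetilde\Xi$ --- a different constant from the symplectic side. So the asserted identities hold only up to positive multiplicative constants, and not even the same constant for the two forms, unless one fixes specific (and nonstandard) normalizations of $\omega_{\mathrm{red}}$ and of the connection.

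To be fair, the paper's own proof is equally silent on this point, and for the intended application (Proposition \ref{exceptionalcohomology}, where Leray--Hirsch only requires that the restricted classes $[\widetilde{\sigma}_{0}|_{\widetilde F}]$ and $[\widetilde\Xi|_{\widetilde F}]$ generate, so nonzero scalars are irrelevant) the discrepancy is harmless. But a complete write-up along your lines must either carry out the computation and record the constants explicitly, stating the proposition ``up to positive multiples,'' or fix at the outset the normalizations of the reduced symplectic form and of the connection on $\J^{-1}_{\CP^{k},\lambda}(0)\to\CP^{k-1}_{\lambda,F}/\!\!/S^{1}$ that make the constants equal to $1$. As written, the claim ``once the reduction normalizations are accounted for, the pullback along $\phi$ of $\sigma_{\mathrm{std}}|_{S^{+}_{F}\times S^{-}_{F}}$ agrees with $\sigma_{\mathrm{std}}|_{Z}$'' is exactly the assertion that needs proof, and taken literally it is false.
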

\begin{proof}
The symplectic {form} of $\CP_{\lambda, F}^{k-1}/\!\! /S^{1}$  is induced by restricting the standard symplectic form $\sum\limits_{i=1}^{k} dx_{i} \wedge dy_{i}$ to $S^{2k-1}$, so it equals $\frac{1}{2}(d\Theta_{+}-d\Theta_{-})$. The same reasoning applies to the curvature {form}.
\end{proof}

\section{Resolution differential forms and their underlying $\mathfrak{g}$-differential graded algebra}\label{sec:resdiff}

Let the notation be as in previous sections. In this section we begin our study of the cohomology of $\M_0$ for circle actions by giving a new description of  $H(\widetilde{\M}_{0})$  departing from Sjamaar's de Rham model for $H(\M_0;\R)$. This alternative description will allow us to relate $H(\widetilde{\M}_{0})$ to $H(\M_0;\R)$ and will be crucial in the study of the surjectivity of the resolution Kirwan map in the next section.    We use the notation from the previous sections. 

\begin{definition}
With the notation of Diagram \eqref{diagram} we define the \emph{cochain complex of resolution forms} on $\M_{0}$ as
\[
\widetilde{\Omega}(\M_{0}):=\left\{ \omega_{0} \in \Omega(\M_{0}^{\top}) \mid \exists\, \widetilde{\rho} \in \Omega(\BLS(M)) \colon (\pi_{\top}')^{*}\omega_{0}=(\iota'_{\top})^{*}\widetilde{\rho} \right\},
\]
with the exterior derivative $d$ as the differential.  
\end{definition}

The first thing to notice about this definition is that it does not depend on the choices involved in the definition of the blow-up $\BLS(M)$ by Remark \ref{choices}. Another useful observation is that, even though for a given form $\omega_{0} \in \Omega(\M_{0}^{\top})$ the existence of a form $\widetilde{\rho} \in \Omega(\BLS(M))$ satisfying
\[
\beta_{\top}^{*}\pi_{\top}^{*}\omega_{0}=(\pi_{\top}')^{*}\omega_{0}=(\iota'_{\top})^{*}\widetilde{\rho}
\] 
might seem quite restrictive because the exceptional divisor $E\subset\BLS(M)$ is a ``larger space'' than the fixed point set $M^{S^1}$, it is actually less restrictive than being a differential form on $\M_{0}$  because for any $p\in\beta^{-1}(\J^{-1}(0)^{\top})$  one has
\[
(\beta_{\top}^{*}\pi_{\top}^{*}\omega_{0})_{p}(v_{1},\ldots,(\beta_{\top})_{*}v_{\mathrm{deg}\,\omega_0})=(\pi_{\top}^{*}\omega_{0})_{\beta(p)}((\beta_{\top})_{*}v_{1},\ldots,(\beta_{\top})_{*}v_{\mathrm{deg}\,\omega_0}),
\]
and the kernel of $\beta_{*}$ at any point $p_0\in E$ contains the tangent space $T(E_F)_{\beta(p_0)}$ of the fiber of $E_F$ over $\beta(p_0)$, where $F\in \F$ is such that  $p_0\in E_F$. This makes extending $\beta_{\top}^{*}\pi_{\top}^{*}\omega_{0}$ to $E_{F}$ potentially easier than extending $\pi_{\top}^{*}\omega_{0}$ to $F$ -- informally, one could describe this as the phenomenon that there is ``more room'' for finding an extension in $\BLS(M)$ than there is in $M$.

Now, let
\[
{\Omega}_{\J}(\BLS(M)):=\left\{ \widetilde \omega \in \Omega(\BLS(M))^{{S^1}} \mid \widetilde \omega|_{\beta^{-1}\left( \J^{-1}(0)^{\top} \right)} \text{ is ${S^1}$-horizontal} \right\}
\]
and
\[
I_{\J}(\BLS(M)):=\left\{ \widetilde \omega \in \Omega(\BLS(M))^{{S^1}} \mid \widetilde \omega|_{\beta^{-1}\left( \J^{-1}(0)^{\top} \right)}=0 \right\}.
\]
\begin{prop}\label{propquotient}
We have an isomorphism of cochain complexes
\[
\widetilde{\Omega}(\M_{0}) \simeq \frac{{\Omega}_{\J}(\BLS(M))}{I_{\J}(\BLS(M))}.
\]
In particular, considering an element $\omega\in \widetilde{\Omega}(\M_{0})$  as a coset on the right-hand side, it is specified by the restriction of any of its representatives to $\beta^{-1}\left( \J^{-1}(0)^{\top} \right)$.
\end{prop}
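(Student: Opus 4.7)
The idea is to construct an explicit isomorphism by descending forms through the principal $S^1$-bundle
\[
\pi'_{\top}=\pi_{\top}\circ\beta_{\top}\colon \beta^{-1}(\J^{-1}(0)^{\top})\longrightarrow \M_{0}^{\top},
\]
using that $\beta_{\top}$ is an $S^1$-equivariant diffeomorphism (the blow-down is a diffeomorphism away from the exceptional divisor, and $\J^{-1}(0)^{\top}$ contains no fixed points), so that the action of $S^1$ on $\beta^{-1}(\J^{-1}(0)^{\top})$ is free and $\pi'_{\top}$ is a genuine principal $S^1$-bundle projection.

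First, I would define a map $\Phi\colon\Omega_{\J}(\BLS(M))\to \widetilde{\Omega}(\M_{0})$ as follows. Given $\widetilde\omega\in\Omega_{\J}(\BLS(M))$, its restriction $(\iota'_{\top})^{*}\widetilde\omega$ is $S^1$-invariant and $S^1$-horizontal by hypothesis, hence basic for the free $S^1$-action on $\beta^{-1}(\J^{-1}(0)^{\top})$. Standard principal bundle theory then produces a unique $\omega_{0}\in\Omega(\M_{0}^{\top})$ with $(\pi'_{\top})^{*}\omega_{0}=(\iota'_{\top})^{*}\widetilde\omega$, and the very existence of $\widetilde\omega\in\Omega(\BLS(M))$ extending $(\pi'_{\top})^{*}\omega_{0}$ shows that $\omega_{0}\in\widetilde{\Omega}(\M_{0})$. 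Setting $\Phi(\widetilde\omega):=\omega_{0}$ gives a well-defined linear map that commutes with $d$ because $(\pi'_{\top})^{*}$ and $(\iota'_{\top})^{*}$ do.

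Next, the kernel computation is immediate: $\Phi(\widetilde\omega)=0$ iff $(\pi'_{\top})^{*}0=(\iota'_{\top})^{*}\widetilde\omega$, i.e.\ iff $\widetilde\omega|_{\beta^{-1}(\J^{-1}(0)^{\top})}=0$, which is precisely the defining condition of $I_{\J}(\BLS(M))$. For surjectivity, take $\omega_{0}\in\widetilde{\Omega}(\M_{0})$ and pick any $\widetilde\rho\in\Omega(\BLS(M))$ with $(\pi'_{\top})^{*}\omega_{0}=(\iota'_{\top})^{*}\widetilde\rho$. I would then replace $\widetilde\rho$ by its $S^1$-average $\widetilde\omega:=\int_{S^1}k^{*}\widetilde\rho\,dk$; since $\beta^{-1}(\J^{-1}(0)^{\top})$ is $S^1$-invariant and $(\iota'_{\top})^{*}\widetilde\rho=(\pi'_{\top})^{*}\omega_{0}$ is already $S^1$-invariant, averaging preserves the restriction, so $(\iota'_{\top})^{*}\widetilde\omega=(\pi'_{\top})^{*}\omega_{0}$. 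The resulting $\widetilde\omega$ is $S^1$-invariant, and horizontality of its restriction follows from the fact that $(\pi'_{\top})^{*}\omega_{0}$ is horizontal for the principal $S^1$-bundle $\pi'_{\top}$, so $\widetilde\omega\in\Omega_{\J}(\BLS(M))$ and $\Phi(\widetilde\omega)=\omega_{0}$.

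Combining the two steps yields the desired isomorphism of cochain complexes, and the final remark in the statement follows from the construction, as $\omega_{0}=\Phi(\widetilde\omega)$ is determined by the basic form $(\iota'_{\top})^{*}\widetilde\omega$ via the principal bundle quotient. The only subtle point is verifying that the $S^1$-averaging preserves the restriction to $\beta^{-1}(\J^{-1}(0)^{\top})$ — that is where the $S^1$-invariance of $\beta^{-1}(\J^{-1}(0)^{\top})$ (from the equivariance of $\beta$ established in the previous section) is essential; everything else reduces to the standard basic-form descent on the principal bundle $\pi'_{\top}$.
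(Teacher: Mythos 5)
Your proof is correct and follows essentially the same route as the paper: restrict to $\beta^{-1}(\J^{-1}(0)^{\top})$, descend basic forms through the free $S^1$-action via $(\pi'_{\top})^{*}$, and identify the kernel with $I_{\J}(\BLS(M))$. The one point where you go beyond the paper's (very terse) argument is the $S^1$-averaging step for surjectivity, which is a genuine detail worth spelling out, since the definition of $\widetilde{\Omega}(\M_{0})$ only demands \emph{some} extension $\widetilde{\rho}\in\Omega(\BLS(M))$, not an invariant one, while $\Omega_{\J}(\BLS(M))$ consists of invariant forms.
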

\begin{proof}
We have the natural surjection
\[
{\Omega}_{\J}(\BLS(M)) \overset{\widetilde{\iota}_{\top}^{*}}{\longrightarrow} \Omega_{\mathrm{bas}\, {S^1}}\left( \beta^{-1}\left( \J^{-1}(0)^{\top} \right) \right) \overset{\left( \beta_{\top}^{-1} \right)^{*}}{\longrightarrow} \Omega_{\mathrm{bas}\, {S^1}}\left( \J^{-1}(0)^{\top} \right) \overset{\left(\pi_{\top}^{*}\right)^{-1}}{\longrightarrow} \widetilde{\Omega}(\M_{0}),
\]
whose kernel is precisely $I_{\J}(\BLS(M))$. Here $\Omega_{\mathrm{bas}\, {S^1}}\left( \J^{-1}(0)^{\top} \right)$ and $\Omega_{\mathrm{bas}\, {S^1}}( \beta^{-1}( \J^{-1}(0)^{\top}))$ are the complexes of basic differential forms on $\J^{-1}(0)^{\top}$ and $\beta^{-1}( \J^{-1}(0)^{\top})$, respectively. 
\end{proof}
Next, observe that there is a natural inclusion
\begin{align*}
\Omega(\M_{0})\hookrightarrow \widetilde{\Omega}(\M_{0}), \qquad \omega &\mapsto \omega.
\end{align*}
Indeed, let $\omega \in \Omega(\M_{0})$ and $\eta \in \Omega(M)$ be such that $\pi_{\top}^{*}\omega=\iota_{\top}^{*}\eta$. Pulling back this equation to $\BLS(M)$ gives  $\beta_{\top}^{*}\pi_{\top}^{*}\omega=\beta_{\top}^{*}\iota_{\top}^{*}\eta=\widetilde{\iota}_{\top}^{*}\beta^{*}\eta$. Hence $\omega$ is a resolution form with smooth extension $\beta^{*}\eta$. This leads to the following 
\begin{definition}
Denote by $C(\M_{0})$ the cokernel of the natural inclusion $\Omega(\M_{0}) \hookrightarrow \widetilde{\Omega}(\M_{0})$, that is, 
\[
C(\M_{0}):=\frac{\widetilde{\Omega}(\M_{0})}{\Omega(\M_{0})}.
\]
\end{definition}
The three differential complexes $\Omega(\M_{0}),\widetilde{\Omega}(\M_{0})$ and $C(\M_{0})$ are naturally related by the short exact sequence of complexes
\begin{center}
\begin{tikzcd}
0 \arrow{r} &\Omega(\M_{0}) \arrow{r} &\widetilde{\Omega}(\M_{0}) \arrow{r} &C(\M_{0}) \arrow{r} &0.
\end{tikzcd}
\end{center}
This induces a long exact sequence in cohomology
\begin{equation}\label{eq:longexact}
\begin{tikzcd}
0 \arrow{r} &H^{0}(\Omega(\M_{0}),d) \arrow{r} &H^{0}(\widetilde{\Omega}(\M_{0}),d) \arrow{r} &H^{0}(C(\M_{0}),d) \arrow[in=175,out=-5,swap]{dll}{\delta} \\
&H^{1}(\Omega(\M_{0}),d) \arrow{r} &H^{1}(\widetilde{\Omega}(\M_{0}),d) \arrow{r} &H^{1}(C(\M_{0}),d) \arrow[in=175,out=-5,swap]{dll}{\delta} \\
&H^{2}(\Omega(\M_{0}),d) \arrow{r} &H^{2}(\widetilde{\Omega}(\M_{0}),d) \arrow{r} &H^{2}(C(\M_{0}),d) \arrow{r} &\ldots.
\end{tikzcd}
\end{equation}
We want to understand this sequence by interpreting each occuring cohomology geometrically. 
%see\footnote{\bf \color{brown} B: Hier wuerde ich eher ein Buch ueber abstrakte homologische Algebra zitieren, z.B.\ Rotman, Advanced Modern Algebra. \textcolor{magenta}{\textbf{M: z. Bsp. Weibel Theorem 1.3.1. oder Hilton--Stammbach Theorem 2.1.}}} \cite[p. 17]{bott-tu}, where $\delta$ is the so-called boundary operator. 
 By Sjamaar's theorem \cite[Theorem 5.5]{sjamaar05} one has $H^{*}(\Omega(\M_{0})) \simeq H^{*}(\M_{0})$, which interprets one third of the occurring spaces. Moreover, one has the following
{\begin{prop}\label{resolutionforms}
\begin{enumerate}
\item For $\omega \in {\Omega}_{\J}(\BLS(M))$ the restriction $\omega|_{\widetilde{\J^{-1}(0)}}$ is horizontal.
\item For $\eta \in I_{\J}(\BLS(M))$ the restriction $\eta|_{\widetilde{\J^{-1}(0)}}$ is zero.
\item There is an isomorphism of cochain complexes
\[
\widetilde{\Omega}(\M_{0}) \simeq \Omega(\widetilde{\M}_{0}).
\]
\item If $F \in \F$ is such that  $F\subset \J^{-1}(0)$, then there is a well-defined surjective restriction map $\widetilde{\Omega}(\M_{0})\rightarrow \Omega((E_F\cap\widetilde{\J^{-1}(0)})/{S^1})=\Omega(\widetilde{F})$. 
\end{enumerate}
\end{prop}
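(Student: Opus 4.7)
Parts (1) and (2) will follow by a density argument. Proposition \ref{prop:desingzero} shows that the strict transform $\widetilde{\J^{-1}(0)}$ is a smooth codimension-one submanifold of $\BLS(M)$, while $\beta^{-1}(\J^{-1}(0)^{\top})$ is the complement in $\widetilde{\J^{-1}(0)}$ of the locus $\widetilde{\J^{-1}(0)}\cap E$ and is therefore open and dense. Since $S^{1}$-horizontality, $i_{\overline X}\omega=0$ for all $X\in\g$, and the vanishing of a pulled-back smooth form are both \emph{closed} conditions on smooth forms, they propagate from this dense subset to all of $\widetilde{\J^{-1}(0)}$, yielding (1) and (2).

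For part (3), I will construct mutually inverse cochain maps using the quotient description $\widetilde{\Omega}(\M_{0}) \simeq \Omega_{\J}(\BLS(M))/I_{\J}(\BLS(M))$ of Proposition \ref{propquotient}. The forward map will send $\omega\in\Omega_{\J}(\BLS(M))$ to its restriction $\omega|_{\widetilde{\J^{-1}(0)}}$; by (1) and (2) this restriction is $S^{1}$-basic and depends only on the class of $\omega$ modulo $I_{\J}(\BLS(M))$, and since the $S^{1}$-action on $\widetilde{\J^{-1}(0)}$ is locally free, it descends to a differential form on the orbifold $\widetilde{\M}_{0}$. Using Corollary \ref{TransformZeroSet} to identify $\widetilde{\M}_0$ with the regular reduction $\BLS(M)/\!\!/S^{1}$, Sjamaar's definition of $\Omega(\widetilde{\M}_0)$ reads as ``forms on the top stratum of $\widetilde{\M}_{0}$ extending to an invariant form on $\BLS(M)$'', which is exactly what the construction produces. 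The inverse map will take $\tilde\omega_{0}\in\Omega(\widetilde{\M}_{0})$, pick an $S^{1}$-invariant extension $\eta\in\Omega(\BLS(M))$ supplied by Sjamaar's definition, verify that $\eta\in\Omega_{\J}(\BLS(M))$ because its restriction to $\beta^{-1}(\J^{-1}(0)^{\top})$ is horizontal by construction, and pass to its class modulo $I_{\J}(\BLS(M))$. Both assignments plainly commute with $d$.

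For part (4), once the isomorphism of (3) is available, the desired map is the composition of that isomorphism with the standard restriction $\Omega(\widetilde{\M}_{0})\to\Omega(\widetilde{F})$ to the smooth closed sub-orbifold $\widetilde{F}\subset\widetilde{\M}_{0}$, which is well-defined. Surjectivity will then follow from a familiar extension argument: lift a given form on $\widetilde{F}$ to a basic form on $E_{F}\cap\widetilde{\J^{-1}(0)}$, extend it to an $S^{1}$-invariant, $S^{1}$-horizontal form on an $S^{1}$-equivariant tubular neighborhood in $\widetilde{\J^{-1}(0)}$ via the tubular retraction, and cut off with an $S^{1}$-invariant bump function before extending by zero.

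The main obstacle will be in part (3), namely verifying that the basic form on $\widetilde{\J^{-1}(0)}$ produced from $\omega\in\Omega_{\J}(\BLS(M))$ genuinely represents an element of Sjamaar's $\Omega(\widetilde{\M}_0)$: one must show that every basic form on $\widetilde{\J^{-1}(0)}$ extends to an $S^{1}$-invariant form on all of $\BLS(M)$. I propose to establish this using an $S^{1}$-equivariant tubular neighborhood of $\widetilde{\J^{-1}(0)}$ in $\BLS(M)$ together with an invariant partition of unity; once this extension is in hand, the inverseness of the two maps and their compatibility with $d$ reduce to formal manipulations.
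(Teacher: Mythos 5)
Your proposal is correct and follows essentially the same route as the paper: density of $\beta^{-1}(\J^{-1}(0)^{\top})$ in the strict transform for (1) and (2), restriction to $\widetilde{\J^{-1}(0)}$ versus $S^{1}$-invariant extension off the closed invariant submanifold $\widetilde{\J^{-1}(0)}\subset\BLS(M)$ for the two inverse maps in (3), and composition with the (surjective) restriction to the closed sub-orbifold $\widetilde{F}$ for (4). The "main obstacle" you flag in (3) is exactly the extension step the paper also relies on, and your tubular-neighborhood argument is the standard way to justify it.
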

\begin{proof} The claims \textit{(1)} and \textit{(2)} are immediate consequences of the density of $\beta^{-1}(\J^{-1}(0)^{\top})$ in $\beta^{-1}(\J^{-1}(0))$. To prove \textit{(3)}, let $\omega$ be a resolution form on $\M_{0}$. Then there is $\eta \in \Omega(\BLS(M))$ such that $\beta_{\top}^{*}\pi_{\top}^{*}\omega=(\iota'_{\top})^{*}\eta$. By restricting this form $\eta$ to $\widetilde{\J^{-1}(0)}$ we obtain an ${S^1}$-basic form which does not depend on the extension $\eta$ of $\omega$ since $\beta^{-1}(\J^{-1}(0)^{\top})$ is dense in $\widetilde{\J^{-1}(0)}$ and $\beta_{\top}$ is a diffeomorphism. Thus we have a natural map
\[
\widetilde{\Omega}(\M_{0}) \rightarrow \Omega(\widetilde{\M}_{0}).
\]
On the other hand, $\widetilde{\J^{-1}(0)}$ is a closed {${S^1}$-invariant} submanifold of $\BLS(M)$ and therefore every {${S^1}$-invariant} differential form on $\widetilde{\J^{-1}(0)}$, and in particular every ${S^1}$-basic form, admits an {${S^1}$-invariant} extension to $\BLS(M)$ and therefore {(since $\widetilde{\J^{-1}(0)}$ contains $\beta^{-1}(\J^{-1}(0)^{\top})$)}  gives us a resolution form on $\M_{0}$ and a map 
\[
\Omega(\widetilde{\M}_{0}) \rightarrow \widetilde{\Omega}(\M_{0}).
\]
These maps are inverse to each other, commute with the differentials, and \textit{(3)} is proved.

We obtain \emph{(4)} by composing the isomorphism from \emph{(3)} with the restriction map $\Omega(\widetilde{\M}_{0})\to \Omega(\widetilde{F})$, noting that Proposition  \ref{prop:desingzero} implies that  $\widetilde{F}$ is a closed submanifold of $\widetilde{\J^{-1}(0)}$. The restriction map is surjective by the general fact that any differential form on a closed submanifold of a smooth manifold can be extended to the ambient manifold. 
\end{proof}}

Thus, the natural map 
\[
H^{*}(\M_{0};\mathbb{R}) \cong H^{*}(\Omega(\M_{0}),d) \longrightarrow H^{*}(\widetilde{\Omega}(\M_{0}),d) \cong H^{*}(\widetilde{\M_{0}})
\]
is in fact induced by the pullback of forms on $\M_{0}$ to forms on $\widetilde{\M_{0}}$ and we have the following

\begin{cor}\label{cor:rauisch}
The complex of resolution forms can equivalently be defined as 
\begin{align*}
\widetilde{\Omega}(\M_{0})&=\left\{ \omega_{0} \in \Omega(\M_{0}^{\top}) \mid \exists\, \widetilde{\rho} \in \Omega_{\mathrm{bas}\,{S^1}}(\widetilde{\J^{-1}(0)}) \colon (\pi_{\top}')^{*}\omega_{0}=(\widetilde{\iota}_{\top})^{*}\widetilde{\rho}  \right\}\\
&=\left\{ \omega_{0} \in \Omega(\M_{0}^{\top}) \mid \exists\, \rho' \in \Omega(\widetilde{\M}_{0}) \colon (\beta_{0}^{\top})^{*}\omega_{0}=(\iota_{0}^{\top})^{*}\rho'  \right\},
\end{align*}
where $\widetilde{\iota}_{\top}$ denotes the inclusion $\beta^{-1}(\J^{-1}(0)^{\top})\rightarrow \widetilde{\J^{-1}(0)}$ and $\iota_{0}^{\top}$ is the inclusion $\widetilde{\M}_{0}^{\top} \rightarrow \widetilde{\M}_{0}$ of
\bqn
\widetilde{\M}_{0}^{\top}:={\widetilde{\pi}\left(\beta^{-1}(\J^{-1}(0)^{\top})\right)=\beta_{0}^{-1}\left(\J^{-1}(0)^{\top}\right)}
\eqn
and $\beta_{0}^{\top} \colon \widetilde{\M}_{0}^{\top} \rightarrow \M_{0}^{\top}$ is the restriction of $\beta_{0}$. 
\end{cor}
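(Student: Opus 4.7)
The plan is to derive both alternative descriptions directly from Proposition \ref{propquotient} and Proposition \ref{resolutionforms}, so that the argument amounts to threading the identifications $\Omega(\widetilde{\M}_0) \simeq \Omega_{\mathrm{bas}\,{S^1}}(\widetilde{\J^{-1}(0)})$ and $\widetilde{\Omega}(\M_0) \simeq \Omega_\J(\BLS(M))/I_\J(\BLS(M))$ through the commutative diagram of blow-up, orbit and inclusion maps.

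First I would verify the equality with the description via basic forms on $\widetilde{\J^{-1}(0)}$. Given $\omega_{0} \in \widetilde{\Omega}(\M_{0})$ with some extension $\widetilde{\rho} \in \Omega(\BLS(M))$, averaging $\widetilde{\rho}$ over $S^1$ and noting that $(\pi_{\top}')^{*}\omega_{0}$ is already $S^{1}$-invariant on $\beta^{-1}(\J^{-1}(0)^\top)$ produces a representative in $\Omega_\J(\BLS(M))$; by parts \textit{(1)} and \textit{(2)} of Proposition \ref{resolutionforms}, its restriction to $\widetilde{\J^{-1}(0)}$ is $S^{1}$-basic and depends only on the class modulo $I_\J(\BLS(M))$, which yields the required $\widetilde{\rho} \in \Omega_{\mathrm{bas}\,{S^1}}(\widetilde{\J^{-1}(0)})$ with $(\pi_{\top}')^{*}\omega_{0}=(\widetilde{\iota}_{\top})^{*}\widetilde{\rho}$. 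Conversely, any basic form on the closed $S^{1}$-invariant submanifold $\widetilde{\J^{-1}(0)} \subset \BLS(M)$ extends to an $S^{1}$-invariant form on $\BLS(M)$ by a standard partition-of-unity plus averaging argument (exactly as used in the proof of Proposition \ref{resolutionforms}\textit{(3)}), recovering an extension in $\Omega(\BLS(M))$ and hence the original definition.

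Next I would verify the equality with the description via forms on $\widetilde{\M}_{0}$, which is essentially a reformulation of the isomorphism $\widetilde{\Omega}(\M_{0}) \simeq \Omega(\widetilde{\M}_{0})$ of Proposition \ref{resolutionforms}\textit{(3)}. Since the $S^{1}$-action on $\widetilde{\J^{-1}(0)}$ is locally free, the orbit map $\widetilde{\pi}\colon \widetilde{\J^{-1}(0)} \to \widetilde{\M}_{0}$ identifies $\Omega_{\mathrm{bas}\,{S^1}}(\widetilde{\J^{-1}(0)})$ with $\Omega(\widetilde{\M}_{0})$, so the basic form $\widetilde{\rho}$ from the first alternative corresponds to a unique $\rho' \in \Omega(\widetilde{\M}_{0})$ with $\widetilde{\pi}^{*}\rho' = \widetilde{\rho}$. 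The diagram defining $\beta_{0}$ gives $\beta_{0}^{\top} \circ \widetilde{\pi}|_{\beta^{-1}(\J^{-1}(0)^\top)} = \pi_{\top}'$ and $\iota_{0}^{\top} \circ \widetilde{\pi}|_{\beta^{-1}(\J^{-1}(0)^\top)} = \widetilde{\pi} \circ \widetilde{\iota}_{\top}$, so pulling back the candidate identity $(\beta_{0}^{\top})^{*}\omega_{0} = (\iota_{0}^{\top})^{*}\rho'$ under $\widetilde{\pi}$ becomes exactly $(\pi_{\top}')^{*}\omega_{0} = (\widetilde{\iota}_{\top})^{*}\widetilde{\rho}$. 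Since $\widetilde{\pi}$ restricts to a principal $S^{1}$-bundle on $\beta^{-1}(\J^{-1}(0)^\top)$, its pullback is injective on forms on $\widetilde{\M}_{0}^{\top}$, so this pullback relation is equivalent to the original identity, and the equivalence of the two formulations follows in both directions.

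The only delicate point in the whole argument is the transition from ``horizontal on the dense open subset $\beta^{-1}(\J^{-1}(0)^\top)$'' to ``basic on the entire strict transform $\widetilde{\J^{-1}(0)}$''; but this is exactly what Proposition \ref{resolutionforms}\textit{(1)}, \textit{(2)} provide via density and continuity, so no new difficulty arises. All remaining steps are formal manipulations with the commutative diagram and with basic/invariant forms on principal (locally free) $S^{1}$-spaces.
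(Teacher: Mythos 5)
Your proposal is correct and follows essentially the same route as the paper: the authors state Corollary \ref{cor:rauisch} as an immediate consequence of Propositions \ref{propquotient} and \ref{resolutionforms} (in particular the restriction/extension argument of part \textit{(3)} and the identification $\Omega(\widetilde{\M}_{0})\simeq\Omega_{\mathrm{bas}\,S^{1}}(\widetilde{\J^{-1}(0)})$ via $\widetilde{\pi}^{*}$), giving no separate proof. Your write-up simply fills in the routine details — averaging, extension from the closed invariant submanifold, and injectivity of $(\widetilde{\pi}^{\top})^{*}$ — all of which are sound.
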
 
 
So it remains to understand $H^{*}\left( C(\M_{0})\right)$, the cohomology of the cokernel. We start by looking at the restriction maps
\[
 \Omega(\M_{0}) \rightarrow \Omega(F) \quad \text{and} \quad  \Omega(\widetilde{\M}_{0}) \rightarrow \Omega(\widetilde{F}),
\]
which exist by \cite[Lemma 3.3]{sjamaar05} and Prop.\ \ref{resolutionforms}, respectively, and the collection $\mathcal{F}_{0}:=\{ F \subset \J^{-1}(0) \cap M^{S^{1}}\}$ consisting all components of the fixed point set contained in the zero level. We obtain the maps

\begin{align*}
r \colon \Omega(\M_{0}) &\longrightarrow \bigoplus\limits_{F \in \mathcal{F}_{0}}\Omega(F), \qquad
\omega \longmapsto (\omega|_{F})_{F \in \mathcal{F}_{0}}
\end{align*} 
and
\begin{align*}
  \widetilde{r} \colon \Omega(\widetilde{\M}_{0}) &\longrightarrow \bigoplus\limits_{F \in \mathcal{F}_{0}}\Omega(\widetilde{F}), \qquad 
 \omega \longmapsto (\omega|_{\widetilde{F}})_{F \in \mathcal{F}_{0}}.
\end{align*} 
Now consider the following diagram
\begin{center}
\begin{tikzcd}
0 \arrow{r} &\ker(r) \arrow{r} \arrow{d}{(\beta_{0})^{*}_{r}} &\Omega\left(\M_{0}\right) \arrow{d}{(\beta_{0})^{*}} \arrow{r}{{r}} &\bigoplus\limits_{{F \in \mathcal{F}_{0}}}\Omega(F) \arrow{d}{\bigoplus\limits_{{F \in \mathcal{F}_{0}}}(\beta_{0}^{F})^{*}} \arrow{r} &0\\
0 \arrow{r} &\ker(\widetilde{r}) \arrow{r}  &\Omega(\widetilde{\M}_{0}) \arrow{r}{{\widetilde{r}}} &\bigoplus\limits_{{F \in \mathcal{F}_{0}}}\Omega(\widetilde{F}) \arrow{r} &0,
\end{tikzcd}
\end{center}
where $(\beta_{0})^{*}_{r} \colon \ker(r) \rightarrow \ker(\widetilde{r})$ is the restriction of $(\beta_{0})^{*}$ to $\ker(r)$. This diagram is of major interest to us because
\[
C(\M_{0})\simeq\coker((\beta_{0})^{*})
\]
as a consequence of Proposition \ref{resolutionforms}.
By the Snake lemma, there is an exact sequence of the form
\begin{multline*}
0 \rightarrow \ker((\beta_{0})^{*}_{r}) \rightarrow \ker((\beta_{0})^{*}) \rightarrow \ker(\bigoplus\limits_{{F \in \mathcal{F}_{0}}}(\beta_{0}^{F})^{*}) \rightarrow \coker((\beta_{0})^{*}_{r}) \rightarrow \coker((\beta_{0})^{*}) \\\rightarrow \coker(\bigoplus\limits_{{F \in \mathcal{F}_{0}}}(\beta_{0}^{F})^{*}) \rightarrow 0.
\end{multline*}
But $\ker((\beta_{0}^{F})^{*})=0$ because $\beta_{0}^{F}$ is a {surjective} submersion. Thus, we have
\begin{equation} \label{cokernels}
0 \longrightarrow \coker((\beta_{0})^{*}_{r}) \longrightarrow \coker((\beta_{0})^{*}) \longrightarrow \coker(\bigoplus\limits_{{F \in \mathcal{F}_{0}}}(\beta_{0}^{F})^{*}) \longrightarrow 0.
\end{equation}
{We now claim that 
\bq
H^{*}(\coker((\beta_{0})^{*}_{r})) = 0,\label{eq:claim2412525}
\eq
which by the long exact cohomology sequence associated with the short exact sequence \eqref{cokernels} immediately implies that there is an isomorphism
\bq
H^{*}(C(\M_{0})) = H^{*}(\coker((\beta_{0})^{*})) \simeq H^{*}(\coker(\bigoplus\limits_{{F \in \mathcal{F}_{0}}}(\beta_{0}^{F})^{*}))=\bigoplus\limits_{{F \in \mathcal{F}_{0}}} H^{*}(\coker((\beta_{0}^{F})^{*})).\label{eq:usefuliso}
\eq}
{To prove our claim, we first observe that since $\ker((\beta_{0})^{*}_{r})=0$, we have the short exact sequence}
\bq
0 \longrightarrow \ker(r) \stackrel{(\beta_{0})^{*}_{r}}{\longrightarrow} \ker(\widetilde{r}) \longrightarrow \coker((\beta_{0})^{*}_{r}) \longrightarrow 0.\label{eq:ses}
\eq
By %\cite[Theorem 3.1]{godbillon71}
\cite[Theorem 11.1.1]{guillemin-sternberg99} we have an isomorphism
\[
H^{*}(\ker(\widetilde{r}))\simeq H^{*}_{c}\left(\widetilde{\M}_{0} \setminus \left(\bigcup\limits_{F \in \mathcal{F}_{0}}\widetilde{F}\right)\right),
\]
where the right-hand side denotes the cohomology of the complex $\Omega_{c}(\widetilde{\M}_{0} \setminus (\cup_{F \in \mathcal{F}_{0}}\widetilde{F}))$ formed by the differential forms on $\widetilde{\M}_{0} \setminus (\cup_{F \in \mathcal{F}_{0}}\widetilde{F}))$  with compact support, and the map $\Omega_{c}(\widetilde{\M}_{0} \setminus (\cup_{F \in \mathcal{F}_{0}}\widetilde{F}))\to \Omega(\widetilde{\M}_{0})$ inducing the above isomorphism is extension by zero, compare \cite[Section 11.1]{guillemin-sternberg99}.  {This implies, by the long exact sequence associated with \eqref{eq:ses},  that our claim  \eqref{eq:claim2412525} follows from the following Proposition, taking into account that $\beta_{0}$ is a diffeomorphism away from the exceptional loci.}
\begin{prop}\label{prop:ext}
Extension by zero induces an isomorphism 
\[
\mathrm{ext} \colon H^{*}_{c}\left(\M_{0} \setminus \left(\bigcup\limits_{F \in \mathcal{F}_{0}}F\right)\right) \rightarrow H^{*}(\ker(r)).
\]
\end{prop}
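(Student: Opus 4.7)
The plan is to adapt the classical argument for the smooth analogue \cite[Theorem 11.1.1]{guillemin-sternberg99} to the Sjamaar complex on $\M_0$. The geometric input is, for each $F \in \mathcal{F}_0$, an $S^1$-invariant tubular neighborhood $U_F \subset M$ of $F$ provided by Proposition \ref{prop:localnormform}, which is equivariantly symplectomorphic to a neighborhood of the zero section in $\Sigma_F \simeq P_F \times_{K_F} \C^{\ell_F^+ + \ell_F^-}$; the $U_F$ can be chosen pairwise disjoint. Fiberwise scaling $h_t([\wp, z]) := [\wp, tz]$ is both $S^1$-equivariant and $K_F$-invariant, and the local normal form \eqref{eq:30.06.2018} together with $\J(F) = 0$ gives $\J \circ h_t = t^2 \J$; hence $h_t$ preserves $\J^{-1}(0) \cap U_F$ and descends to a deformation retraction $\bar h_t$ of an open neighborhood $\bar U_F \subset \M_0$ of $F$ onto $F$.

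The bulk of the work is to lift $\bar h_t$ to a cochain homotopy on the complex of Sjamaar forms. Given $\omega \in \Omega(\M_0)$ with $S^1$-invariant extension $\tilde \omega \in \Omega(M)^{S^1}$, the standard radial homotopy formula along the Euler vector field of $\Sigma_F$ produces an $S^1$-invariant form $H_F \tilde\omega \in \Omega(U_F)^{S^1}$ satisfying
\[
\tilde\omega|_{U_F} - \pi_F^* \iota_F^* \tilde\omega = d(H_F \tilde\omega) + H_F(d\tilde\omega),
\]
where $\pi_F \colon U_F \to F$ is the bundle projection and $\iota_F \colon F \hookrightarrow M$ the inclusion. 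The $S^1$-equivariance of $h_t$ and its preservation of $\J^{-1}(0)$ imply that $H_F \tilde\omega$ is $\J$-basic whenever $\tilde\omega$ is, and that its class modulo $I_\J(M)$ depends only on $\omega$; thus $H_F$ descends to an operator $H_F \colon \Omega(\bar U_F) \to \Omega(\bar U_F)$. When $\omega \in \ker(r)$ the boundary term $\pi_F^* \iota_F^* \tilde\omega$ vanishes on $\J^{-1}(0)\cap U_F$, giving the homotopy identity $\omega|_{\bar U_F} = d(H_F \omega) + H_F(d\omega)$ in $\Omega(\bar U_F)$ with $H_F\omega \in \ker(r|_{\bar U_F})$.

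Surjectivity and injectivity of $\mathrm{ext}$ in cohomology then follow by a standard cut-off argument. Choose $\chi_F \in \Cinft(\M_0)$ with $\supp \chi_F \subset \bar U_F$ and $\chi_F \equiv 1$ on a smaller neighborhood $\bar V_F$ of $F$. For surjectivity, given a closed $\omega \in \ker(r)$, set $\omega' := \omega - d\bigl(\sum_{F \in \mathcal{F}_0} \chi_F H_F \omega\bigr)$; this is closed, cohomologous to $\omega$ in $\ker(r)$, and vanishes on $\bigcup_F \bar V_F$, hence is supported in a compact subset of $\M_0 \setminus \bigcup_F F$ and defines a class in $H^*_c(\M_0 \setminus \bigcup_F F)$ mapping to $[\omega]$. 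For injectivity, suppose $\mathrm{ext}(\alpha) = d\beta$ with $\beta \in \ker(r)$, and shrink the $\bar U_F$ so that $\alpha$ vanishes on each; then $\beta$ is closed on $\bar U_F$ and $\beta' := \beta - d\bigl(\sum_{F \in \mathcal{F}_0} \chi_F H_F \beta\bigr)$ satisfies $d\beta' = \mathrm{ext}(\alpha)$ and vanishes on $\bigcup_F \bar V_F$, yielding a primitive of $\alpha$ in $\Omega_c(\M_0 \setminus \bigcup_F F)$. The principal technical obstacle is the descent of $H_F$ through the quotient $\Omega(\M_0) \simeq \Omega_\J(M)/I_\J(M)$: one must verify that the radial homotopy preserves both $\J$-basicity and the ideal $I_\J(M)$, each of which rests on the identity $\J \circ h_t = t^2 \J$ together with the $S^1$-equivariance of $h_t$.
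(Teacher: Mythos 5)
Your argument is correct and follows the same overall strategy as the paper's proof: localize near each $F\in\mathcal{F}_0$ in an invariant tubular neighborhood, produce there a primitive of the closed form (resp.\ of the primitive $\nu'$) that vanishes on $F$, and subtract $d(\chi_F\cdot(\text{primitive}))$ to push the representative off a neighborhood of $\bigcup_F F$; surjectivity and injectivity both reduce to this. Where you genuinely differ is in the key local step. The paper's Lemma \ref{lem:8592306} obtains the primitive abstractly: since $U\to F$ is an equivariant deformation retraction, $i^{*}$ is an isomorphism on the relevant cohomology, so $[\omega|_U]=0$, and the resulting primitive is then corrected by subtracting $\pi^{*}i^{*}\nu'$ so that it vanishes on $F$. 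You instead construct the primitive explicitly via the radial homotopy operator of the fiberwise scaling $h_t$ coming from the local normal form of Proposition \ref{prop:localnormform}, and you correctly isolate the price of doing so: one must check that $H_F$ preserves $\J$-basicity and the ideal $I_{\J}(M)$ so that it descends to the Sjamaar complex. That check does go through --- equivariance of $h_t$ and $\J\circ h_t=t^2\J$ force $h_t$ to preserve $\J^{-1}(0)^{\top}\cap U_F$ and the Euler field to be tangent to it, while $E|_F=0$ gives $\iota_F^{*}H_F\tilde\omega=0$ --- so your operator automatically lands in $\ker(r|_{\bar U_F})$ with no separate correction term (and the term $\pi_F^{*}\iota_F^{*}\tilde\omega$ in fact vanishes identically on $U_F$ for $\omega\in\ker(r)$, not just along the zero level). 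The trade-off: your route leans on the normal form and a pointwise tangency verification, whereas the paper's is softer but applies its lemma, stated for basic forms on $U$, to forms that are a priori only invariant there, so it implicitly needs the invariant-form version of the same statement. Both versions are complete proofs.
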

{For the proof we need the following basic}
 \begin{lem}\label{lem:8592306}
 Let $F \subset M$ be a component of the fixed point set, $\pi \colon U \rightarrow F$ be an invariant tubular neighborhood and $i \colon F \rightarrow U$ be the inclusion. If $\omega \in \Omega^{k}_{\mathrm{bas}\,{S^1}}(U)$ is closed and $i^{*}\omega=0$, there is $\nu \in \Omega_{\mathrm{bas}\,{S^1}}^{k-1}(U)$ with $i^{*}\nu=0$ and $d\nu=\omega$.
 \end{lem}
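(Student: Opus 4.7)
The strategy is an $S^{1}$-equivariant relative Poincar\'e lemma for the tubular neighborhood, constructing an explicit homotopy operator that preserves basicness and vanishes on $F$. After shrinking $U$ if necessary, I identify $U$ via the equivariant tubular neighborhood theorem with an $S^{1}$-invariant open disk subbundle of the normal bundle of $F$ in $M$, so that $\pi\colon U\to F$ is the bundle projection, $i\colon F\to U$ the zero section, and the $S^{1}$-action on each fiber is the linear isotropy representation of $F$ (see Proposition \ref{prop:localnormform}). The key tool will be the smooth scaling homotopy
\[
H\colon [0,1]\times U\longrightarrow U,\qquad H(t,v):=tv,
\]
which satisfies $H_{0}=i\circ\pi$, $H_{1}=\mathrm{id}_{U}$, and is $S^{1}$-equivariant with respect to the trivial action on $[0,1]$ (since the fiber action is linear).

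Next, I define the chain homotopy operator
\[
K\colon \Omega^{j}(U)\longrightarrow \Omega^{j-1}(U),\qquad K\eta:=\int_{0}^{1} i_{\partial/\partial t}(H^{*}\eta)\, dt,
\]
where $i_{\partial/\partial t}$ denotes contraction on $[0,1]\times U$. Applying Cartan's magic formula $L_{\partial/\partial t}=d\, i_{\partial/\partial t}+i_{\partial/\partial t}\, d$ to $H^{*}\eta$, integrating in $t$ from $0$ to $1$, and noting that the exterior derivative on $U$ commutes with $\int\cdot\, dt$, yields the standard chain homotopy identity
\[
dK\eta+Kd\eta \;=\; H_{1}^{*}\eta-H_{0}^{*}\eta \;=\; \eta-\pi^{*}i^{*}\eta.
\]
Applied to our $\omega$, the hypotheses $d\omega=0$ and $i^{*}\omega=0$ give $d(K\omega)=\omega$, so I set $\nu:=K\omega$.

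The remaining task is to check that $\nu$ is basic and that $i^{*}\nu=0$. For $S^{1}$-invariance, equivariance of $H$ implies that $H^{*}$ preserves $S^{1}$-invariant forms, and both $i_{\partial/\partial t}$ and integration in $t$ commute with the $S^{1}$-action (trivial on $[0,1]$). For horizontality, equivariance of $H$ also gives $H_{*}\overline{Y}=\overline{Y}\circ H$ for the fundamental vector field $\overline{Y}$ of the $S^{1}$-action, so $i_{\overline{Y}}(H^{*}\omega)=H^{*}(i_{\overline{Y}}\omega)=0$ by horizontality of $\omega$; since $i_{\overline{Y}}$ anticommutes with $i_{\partial/\partial t}$, this forces $i_{\overline{Y}}\nu=0$. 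Finally, at a point $(t,p)$ with $p$ on the zero section $F$, the differential of $H$ sends $\partial/\partial t$ to $\tfrac{d}{ds}\big|_{s=t}(sp)=p$, viewed as an element of $T_{tp}U$; since $p=0$ in the fiber, this tangent vector vanishes, so $i_{\partial/\partial t}H^{*}\omega$ vanishes identically along $[0,1]\times F$ and hence $\nu|_{F}=0$. The main subtlety I anticipate is simultaneous preservation of horizontality by $K$, which hinges on the fact that the fiberwise scaling commutes with the linear $S^{1}$-action; this is precisely why an \emph{invariant} tubular neighborhood is needed in the hypothesis.
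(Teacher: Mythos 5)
Your proof is correct, but it takes a different (more constructive) route than the paper. The paper's argument is soft: it invokes the fact that the equivariant deformation retraction $\pi\colon U\to F$ induces isomorphisms $\pi^{*}$ and $i^{*}$ on \emph{basic} cohomology, concludes from $i^{*}\omega=0$ that $[\omega]=0$ in $H^{*}_{\mathrm{bas}\,S^1}(U)$, picks any basic primitive $\nu'$ with $d\nu'=\omega$, and then corrects it to $\nu:=\nu'-\pi^{*}i^{*}\nu'$ so that $i^{*}\nu=0$ while $d\nu=\omega$ is preserved. You instead build the explicit fiberwise-scaling homotopy operator $K$ and verify by hand that $K$ preserves invariance and horizontality and that $K\omega$ vanishes along the zero section, so that $\nu:=K\omega$ already has all required properties with no correction step. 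The trade-off is clear: the paper's version is shorter but leans on the (standard, yet not entirely free) fact that homotopy invariance holds for basic cohomology, whereas your version proves exactly that fact in the relevant special case and as a bonus yields the stronger conclusion $\nu|_{F}=0$ pointwise rather than merely $i^{*}\nu=0$. Your identification of horizontality-preservation (linearity of the fiber action commuting with scaling) as the crux is exactly the point that the paper's black-box citation hides. One cosmetic remark: no shrinking of $U$ is needed, since an invariant tubular neighborhood is by definition equivariantly diffeomorphic over $F$ to an invariant disk subbundle of the normal bundle, on which $H(t,v)=tv$ is defined.
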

\begin{proof}[Proof of Lemma \ref{lem:8592306}]
Since $U$ is an equivariant deformation retraction, we have isomorphisms
\[
\pi^{*} \colon H^{*}_{\mathrm{bas}\,{S^1}}(F) \rightarrow H^{*}_{\mathrm{bas}\,{S^1}}(U) \qquad i^{*} \colon H^{*}_{\mathrm{bas}\,{S^1}}(U) \rightarrow H^{*}_{\mathrm{bas}\,{S^1}}(F).
\]
For $i^{*}\omega=0$ it follows, that $[i^{*}\omega]=0 \in H^{*}_{\mathrm{bas}\,{S^1}}(F)$. Since $i^{*}$ is an isomorphism, we find that $[\omega]=0 \in H^{*}_{\mathrm{bas}\,{S^1}}(U)$ and there is $\nu' \in \Omega^{k-1}_{\mathrm{bas}\,{S^1}}(U)$ with $d\nu'=\omega$. Moreover, $0=i^{*}\omega =i^{*}d\nu'=di^{*}\nu'$ and $i^{*}\nu'$ is closed. Then 
\[
\nu:=\nu'-\pi^{*}i^{*}\nu'
\]
is such that
\begin{itemize}
\item $d\nu =d(\nu'-\pi^{*}i^{*}\nu')=d\nu'-\pi^{*}(di^{*}\nu')=d\nu'=\omega$,
\item $i^{*}\nu=i^{*}(\nu'-\pi^{*}i^{*}\nu')=i^{*}\nu'-i^{*}\pi^{*}i^{*}\nu'=i^{*}\nu'-(\underbrace{\pi\circ i}_{=id})^{*}i^{*}\nu'=0$.
\end{itemize}
\end{proof}
\begin{proof}[Proof of Proposition \ref{prop:ext}]
\begin{enumerate}
\item \textbf{Well-definedness:} At first, we have to make sure that {the extension map is} well-defined {with the specified target. To this end, we} use the following identification from Section \ref{sec:2.1}: 
\[
\Omega(\M_{0}) \cong \frac{\Omega_{\J}(M)}{I_{\J}(M)},
\]
where
\[
\Omega_{\J}(M):=\left\{ \omega \in \Omega(M)^{{{S^1}}} \mid \omega|_{\J^{-1}(0)^{\top}} \text{ basic} \right\}
\]
and
\[
I_{\J}(M):=\left\{ \omega \in \Omega(M)^{{{S^1}}} \mid \omega|_{\J^{-1}(0)^{\top}} =0 \right\}.
\]
Then 
\[
\ker(r) \cong \frac{\left\{ \omega \in \Omega(M)^{{{S^1}}} \mid \omega|_{\J^{-1}(0)^{\top}} \text{ basic}, \, \omega|_{F}=0 \, \text{for all} \, F \in \mathcal{F}_{0} \right\}}{\left\{ \omega \in \Omega(M)^{{{S^1}}} \mid \omega|_{\J^{-1}(0)^{\top}} =0, \, \omega|_{F}=0 \, \text{for all} \, F \in \mathcal{F}_{0}   \right\}}.
\]
Thus, extension by zero induces a well-defined map 
\[
\mathrm{ext} \colon \Omega_{\textrm{bas }{S^1}}(\J^{-1}(0)^{\top})_{c} \longrightarrow \ker(r),
\]
{and since this map commutes with the differentials it descends to cohomology.}

\item \textbf{Surjectivity:}
Let $\omega \in \Omega^{k}(M)^{{S^1}}$ be such that 
\begin{itemize}[leftmargin=6pc]
\item $\omega|_{\J^{-1}(0)^{\top}}$ is basic,
\item $\omega|_{F}=0$ for all $F \in \mathcal{F}_{0}$,
\item $d \omega =0$.
\end{itemize}
{For $F\in \F_0$, let} $\pi \colon U \rightarrow F$ be an invariant tubular neighborhood and $i \colon F \rightarrow U$ {the inclusion}. By the above lemma, we find $\nu_{F} \in \Omega_{\mathrm{bas}\,{S^1}}^{k-1}(U)$ such that 
\[
\omega = d\nu_{F} \text{ on $U$} \qquad \nu|_{F}=0.
\]
Now, let $\varrho_{F} \in C_{c}^{\infty}(U)^{{S^1}}$ be a ${S^1}$-invariant compactly supported function, which is equal to $1$ on a neighborhood of $F$.
Then 
\[
\widetilde\omega:=\left(\omega-\sum\limits_{F \in \mathcal{F}_{0}}d(\varrho_{F} \cdot \nu_{F})\right)\Bigg|_{\J^{-1}(0)^{\top}} \in \Omega^{k}_{\mathrm{bas}\, {S^1}}\left(\J^{-1}(0)^{\top}\right)_{c}
\]
{is well-defined and} such that
\begin{itemize}
\item $d\widetilde\omega=0$,
\item $\widetilde\omega$ is basic,

\end{itemize}
{which shows that $\mathrm{ext}(\widetilde\omega)\in \ker(r)$, and by construction we have $[\mathrm{ext}(\widetilde\omega)]=[\omega]\in H^{*}(\ker(r))$.}

\item \textbf{Injectivity:}
Let $\omega \in \Omega^{k}_{\mathrm{bas}\,{S^1}}(\J^{-1}(0)^{\top})_{c}$ and $\nu' \in \Omega^{k-1}(M)^{{S^1}}$ be such that
\[
{\mathrm{ext}(\omega)}=d{\nu'}, \qquad \nu'|_{\J^{-1}(0)^{\top}} \, \text{{is }basic}, \qquad \nu'|_{F}=0   \text{ for all} \, F \in \mathcal{F}_{0}.
\]
Then{, for each $F\in \F_0$,} there is an invariant tubular neighborhood $U$ of $F$ on which $\mathrm{ext}(\omega)|_{U}=0$, {which implies that $\nu'|_{U}$ is closed}. By the above lemma there exists $\alpha_{F} \in \Omega_{\mathrm{bas}\,{S^1}}^{k-2}(U)$ such that
\[
\nu'|_{U} =d\alpha_{F} \qquad \alpha|_{F}=0.
\]
Now
\[
\nu:=\left(\nu'-\sum\limits_{F \in \mathcal{F}_{0}}d(\varrho_{F} \cdot \alpha_{F})\right)\Bigg|_{\J^{-1}(0)^{\top}}
\]
is an element of $\Omega_{\mathrm{bas}\,{S^1}}(\J^{-1}(0)^{\top})_{c}$ and ${\mathrm{ext}(\omega)}=d\nu.$
%\textbf{Retrakt $r \colon T \rightarrow F$ einführen in $M$ wie in Godbillon, besser Guillemin-Sternberg 11.1.. Dieser ist eigentlich, also gilt\[
%r^{*} \colon H^{*}_{c}(T) \rightarrow H^{*}_{c}(F)
%\]
%ist ein Isomorphismus.}
%
%\item Let $\alpha \in \ker(r)$ be closed, so $d\alpha=0$. Since $\alpha|_ {F}=0$ also $[\alpha|_{F}]=[0]\in H^{*}_{c}(F)$ and there is a form $\beta \in \Omega_{c}(T) \colon \alpha|_{T}=d\beta.$ Now let $\gamma \in \Omega_{\Phi}(M)$ be such that $\gamma|_{T}=\beta$. Then
%\[
%\alpha':=\alpha-d\gamma
%\]
%is a closed form with compact support, cohomologous to $\alpha$, which vanishes on $T$. Thus $\alpha' \in \M_{0}^{\top}$ and $x$ is surjective. 
%\item Let $\alpha \in \Omega_{c}(\M_{0}^{\top})$ and $\beta \in \ker(r)$ be such that
%\[
%x(\alpha)=d\beta.
%\]
%We can assume $\supp(\alpha]\cap T= \emptyset$. Thus, $d\beta|_{T}=0$. Since $\beta|_{F}=0$, there is some $\gamma \in \Omega_{c}(T)\colon \beta=d\gamma$ on $T$. Moreover, there is $\hat{\gamma} \in \Omega_{c}(\M_{0}) \colon \hat{\gamma}=\gamma$ on $T$. Now
%\[
%\beta':=\beta-d\hat{\gamma}
%\]
%is an element of $\Omega_{c}(\M_{0}^{\top})$ and 
%\[
%d\beta'=d\beta=d\alpha.
%\]
\end{enumerate}
\end{proof}
{Having proved \eqref{eq:claim2412525},} we deduce by applying Proposition \ref{exceptionalcohomology}  that 
 \begin{align*}
 H^{*}(C(\M_{0})) &\simeq \bigoplus\limits_{{F \in \mathcal{F}_{0}}} H^{*}(\coker((\beta_{0}^{F})^{*}))=\bigoplus\limits_{{F \in \mathcal{F}_{0}}} \coker\big(H^{*}(F) \stackrel{{(\beta_{0}^{F})^{*}}}{\longrightarrow} H^{*}(\widetilde{F}) \big)\\
 &=\bigoplus\limits_{{F \in \mathcal{F}_{0}}} \coker\left(H^{*}(F) \rightarrow H^{*}(F) \otimes {\frac{\mathbb{R}[\widetilde{\sigma}_{0}|_{\widetilde F},\widetilde{\Xi}|_{\widetilde F}]}{I_F}} \right)\\
 &\simeq \bigoplus\limits_{{F \in \mathcal{F}_{0}}} H^{*}(F) \otimes {\frac{\mathbb{R}[\widetilde{\sigma}_{0}|_{\widetilde F},\widetilde{\Xi}|_{\widetilde F}]_{\geq 1}}{I_F}},
\end{align*}
{where $\mathbb{R}[\widetilde{\sigma}_{0}|_{\widetilde F},\widetilde{\Xi}|_{\widetilde F}]_{\geq 1}$ denotes the ideal in the polynomial ring $\mathbb{R}[\widetilde{\sigma}_{0}|_{\widetilde F},\widetilde{\Xi}|_{\widetilde F}]$ given by all polynomials of degree $\geq 1$, we used in the first line that $\ker((\beta_{0}^{F})^{*})=0$ on forms since $\beta_{0}^{F}$ is a surjective submersion,} and in the last line that {$(\beta_{0}^{F})^{*}$ embeds  $H^{*}(F)$ into} $H^{*}(F) \otimes {\frac{\mathbb{R}[\widetilde{\sigma}_{0}|_{\widetilde F},\widetilde{\Xi}|_{\widetilde F}]}{I_F}}$ {as the degree zero subspace}.  
This yields a full understanding of the long exact sequence {\eqref{eq:longexact}:}
\begin{thm}\label{longexactsequence}
There is a long exact sequence of the form
\[
\ldots \longrightarrow H^{k}\left(\M_{0}\right) \longrightarrow H^{k}(\widetilde{\M}_{0}) \longrightarrow \left(\bigoplus\limits_{{F \in \mathcal{F}_{0}}} {H^{*}(F) \otimes\frac{\mathbb{R}[\widetilde{\sigma}_{0}|_{\widetilde F},\widetilde{\Xi}|_{\widetilde F}]_{\geq 1}}{I_F}}\right)_{k} \longrightarrow H^{k+1}(\widetilde{\M}_{0}) \longrightarrow \ldots.
\]{\qed}
\end{thm}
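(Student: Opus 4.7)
The plan is to obtain the long exact sequence from the obvious short exact sequence of complexes
\[
0 \longrightarrow \Omega(\M_0) \longrightarrow \widetilde\Omega(\M_0) \longrightarrow C(\M_0) \longrightarrow 0,
\]
pass to the associated long exact cohomology sequence, and then identify each of the three types of terms geometrically. The first two identifications are handed to us: by Sjamaar's de Rham theorem we have $H^*(\Omega(\M_0))\simeq H^*(\M_0;\R)$, and by Proposition \ref{resolutionforms} we have $H^*(\widetilde\Omega(\M_0))\simeq H^*(\widetilde\M_0)$. So the real task is to identify $H^*(C(\M_0))$ with $\bigoplus_{F\in\F_0} H^*(F)\otimes \R[\widetilde\sigma_0|_{\widetilde F},\widetilde\Xi|_{\widetilde F}]_{\geq 1}/I_F$.

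For this, I would exploit the restriction maps $r\colon \Omega(\M_0)\to \bigoplus_{F\in\F_0}\Omega(F)$ and $\widetilde r\colon \Omega(\widetilde\M_0)\to \bigoplus_{F\in\F_0}\Omega(\widetilde F)$, which by Proposition \ref{resolutionforms}(4) and \cite[Lemma 3.3]{sjamaar05} are both surjective. Via the isomorphism $C(\M_0)\simeq \coker((\beta_0)^*)$ (which follows from \eqref{resolutionforms}), the snake lemma applied to the two short exact sequences
\[
0\to\ker(r)\to\Omega(\M_0)\to\bigoplus_F \Omega(F)\to 0,\qquad 0\to\ker(\widetilde r)\to\Omega(\widetilde\M_0)\to\bigoplus_F\Omega(\widetilde F)\to 0
\]
connected by the blow-down pullbacks yields the short exact sequence \eqref{cokernels}. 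Since each $\beta_0^F\colon\widetilde F\to F$ is a surjective submersion, the kernel of $(\beta_0^F)^*$ on forms vanishes, and only the cokernel contributes.

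The cleanest way to kill $H^*(\coker((\beta_0)^*_r))$ is to show by the long exact sequence associated with $0\to\ker(r)\to\ker(\widetilde r)\to\coker((\beta_0)^*_r)\to 0$ that $(\beta_0)^*_r$ is a quasi-isomorphism. This is precisely the content of Proposition \ref{prop:ext}: both $H^*(\ker(r))$ and $H^*(\ker(\widetilde r))$ are identified with compactly supported cohomology on complements of the fixed point loci (respectively their strict transforms), and $\beta_0$ is a diffeomorphism away from the exceptional loci; hence the two agree. Once this vanishing is in hand, \eqref{cokernels} gives $H^*(C(\M_0))\simeq \bigoplus_F H^*(\coker((\beta_0^F)^*))$, and Proposition \ref{exceptionalcohomology} combined with the Leray--Hirsch description of $H^*(\widetilde F)$ as a free $H^*(F)$-module generated by the restrictions of $[\widetilde\sigma_0]$ and $[\widetilde\Xi]$ shows that $(\beta_0^F)^*$ embeds $H^*(F)$ as the ``constant'' subspace, so the cokernel is exactly $H^*(F)\otimes\R[\widetilde\sigma_0|_{\widetilde F},\widetilde\Xi|_{\widetilde F}]_{\geq 1}/I_F$.

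The main obstacle is the snake-lemma step, and more precisely the verification that $(\beta_0)^*_r$ is a quasi-isomorphism. This rests entirely on Proposition \ref{prop:ext}, whose nontrivial content is that the kernel of the restriction to the $F$'s has cohomology computed by compactly supported forms on $\M_0^{\top}$ together with the neighboring strata; the Guillemin--Sternberg identification $H^*(\ker(\widetilde r))\simeq H^*_c(\widetilde\M_0\setminus\bigcup_F\widetilde F)$ then matches the two sides via the diffeomorphism induced by $\beta_0$ on the complement of the exceptional divisor. After that, everything reduces to substituting the identifications into \eqref{eq:longexact}, which gives the stated long exact sequence.
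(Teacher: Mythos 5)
Your proposal is correct and follows essentially the same route as the paper: the short exact sequence $0\to\Omega(\M_0)\to\widetilde\Omega(\M_0)\to C(\M_0)\to 0$, the snake-lemma reduction of $H^*(C(\M_0))$ to the cokernels over the fixed-point components, the vanishing of $H^*(\coker((\beta_0)^*_r))$ via Proposition \ref{prop:ext} together with the Guillemin--Sternberg identification of $H^{*}(\ker(\widetilde{r}))$ with compactly supported cohomology, and the final identification of each $\coker((\beta_0^F)^*)$ via Proposition \ref{exceptionalcohomology}. No substantive differences from the paper's argument.
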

As a special case we get
\begin{cor}\label{splitting}
If the fixed point set {$\J^{-1}(0) \cap M^{S^{1}}$} consists only of isolated fixed points and {one has}  $H^{2k+1}(\M_{0})=0$ for all $k$, {then the natural map $
H^{\ast}(\M_{0})\to H^{\ast}(\widetilde{\M}_{0})
$ is injective and there is}  a (non-canonical) {isomorphism}
\[
H^{*}(\widetilde{\M}_{0})\cong H^{*}\left(\M_{0}\right) \oplus  \bigoplus\limits_{{F \in \mathcal{F}_{0}}} {\frac{\mathbb{R}[\widetilde{\sigma}_{0}|_{\widetilde F},\widetilde{\Xi}|_{\widetilde F}]_{\geq 1}}{I_F}}.
\]
\end{cor}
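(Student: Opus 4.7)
The plan is to apply Theorem \ref{longexactsequence} and then track degrees carefully. Under the hypothesis that $\J^{-1}(0) \cap M^{S^1}$ consists only of isolated fixed points, each component $F \in \F_0$ is a single point, so $H^*(F) \cong \R$ is concentrated in degree zero. Consequently, the third term in the long exact sequence simplifies to
\[
C^* := \bigoplus_{F \in \F_0} \frac{\R[\widetilde{\sigma}_0|_{\widetilde F}, \widetilde{\Xi}|_{\widetilde F}]_{\geq 1}}{I_F},
\]
and since both $\widetilde{\sigma}_0$ and $\widetilde{\Xi}$ have cohomological degree $2$, the space $C^k$ vanishes for all odd $k$ (and for $k=0$).

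Next I would insert this information into the long exact sequence and split it by parity. In odd degree the segment
\[
0 = H^{2k+1}(\M_0) \to H^{2k+1}(\widetilde{\M}_0) \to C^{2k+1} = 0
\]
forces $H^{2k+1}(\widetilde{\M}_0) = 0$. In even degree $2k$, the preceding term $C^{2k-1}$ vanishes as well, so the map $H^{2k}(\M_0) \to H^{2k}(\widetilde{\M}_0)$ is injective, which gives the injectivity claim in the corollary. Moreover, the vanishing $H^{2k+1}(\M_0) = 0$ forces the next map $H^{2k}(\widetilde{\M}_0) \to C^{2k}$ to be surjective, so one obtains short exact sequences
\[
0 \to H^{2k}(\M_0) \to H^{2k}(\widetilde{\M}_0) \to C^{2k} \to 0
\]
in every even degree, while in odd degrees all three spaces are zero.

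Since these are short exact sequences of real vector spaces, they split non-canonically; assembling the chosen splittings over all $k$ yields the desired isomorphism $H^*(\widetilde{\M}_0) \cong H^*(\M_0) \oplus C^*$. There is essentially no hard step here, only bookkeeping: the argument reduces to observing the parity in which $C^*$ is supported and then invoking the hypothesis $H^{2k+1}(\M_0)=0$. The one point worth double-checking is that $\widetilde{\Xi}$ is of cohomological degree $2$, which is clear since it is the curvature form of an $S^1$-connection; together with $\deg \widetilde{\sigma}_0 = 2$ this guarantees that $C^*$ lives only in even positive degrees, which is what makes the parity argument work.
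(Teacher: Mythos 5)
Your argument is correct and follows essentially the same route as the paper's own proof: both reduce the long exact sequence of Theorem \ref{longexactsequence} to short exact sequences of vector spaces in even degrees using that the cokernel term is concentrated in even positive degrees (isolated fixed points plus degree-$2$ generators) and that $H^{\mathrm{odd}}(\M_0)=0$, and then invoke the splitting of short exact sequences of $\R$-vector spaces. Your degree bookkeeping, including the observation that $C^{2k-1}=0$ gives the injectivity of $H^{2k}(\M_0)\to H^{2k}(\widetilde\M_0)$, matches the paper's reasoning.
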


\begin{rem}\label{singularinresolution}
This corollary gives us a way to identify those classes in $H^{*}(\widetilde{\M_{0}})$ which come from classes in $H^{*}(\M_{0})$. In fact, consider the natural map 
\[
R \colon H^{*}(\widetilde{\M_{0}}) \longrightarrow \bigoplus\limits_{F \in \mathcal{F}_{0}}H^{*}(\widetilde{F}) \longrightarrow \bigoplus\limits_{F \in \mathcal{F}_{0}}\coker(H^{*}(F) \rightarrow H^{*}(\widetilde{F})),
\]
induced by restriction to the exceptional components and projection to the according cokernels.
Then, we have 
\[
H^{*}(\M_{0}) \cong \left\{[\omega] \in H^{*}(\widetilde{\M_{0}}) \mid R([\omega])=0 \right\}.
\]
\end{rem}

\begin{proof}
Since {$H^{*}(F)=\R$ when $F$ is a point and}  the generators {$\widetilde{\sigma}_{0}|_{\widetilde F},\widetilde{\Xi}|_{\widetilde F}$} are of degree $2$, and in particular even, the {odd degree components of $\bigoplus\limits_{{F \in \mathcal{F}_{0}}} H^{*}(F) \otimes\frac{\mathbb{R}[\widetilde{\sigma}_{0}|_{\widetilde F},\widetilde{\Xi}|_{\widetilde F}]_{\geq 1}}{I_F}=\bigoplus\limits_{{F \in \mathcal{F}_{0}}} \frac{\mathbb{R}[\widetilde{\sigma}_{0}|_{\widetilde F},\widetilde{\Xi}|_{\widetilde F}]_{\geq 1}}{I_F}$} 
 vanish and the long exact sequence from Theorem \ref{longexactsequence} breaks down into sequences
\[
0 \rightarrow H^{2k}\left(\M_{0}\right) \rightarrow H^{2k}(\widetilde{\M}_{0}) \rightarrow {\left(\bigoplus\limits_{{F \in \mathcal{F}_{0}}} \frac{\mathbb{R}[\widetilde{\sigma}_{0}|_{\widetilde F},\widetilde{\Xi}|_{\widetilde F}]_{\geq 1}}{I_F}\right)_{2k}}\rightarrow H^{2k+1}\left(\M_{0}\right) \rightarrow H^{2k+1}(\widetilde{\M}_{0}) \rightarrow 0,
\]
{where we denote by the subindex $2k$ the subspace of degree $2k$ elements}. 
Now the vanishing of the odd cohomology of $\M_{0}$ implies that also the odd cohomology of $\widetilde{\M}_{0}$ vanishes and the above sequence further reduces to 
\[
0 \longrightarrow H^{k}\left(\M_{0}\right) \longrightarrow H^{k}(\widetilde{\M}_{0}) \longrightarrow {\left(\bigoplus\limits_{{F \in \mathcal{F}_{0}}} \frac{\mathbb{R}[\widetilde{\sigma}_{0}|_{\widetilde F},\widetilde{\Xi}|_{\widetilde F}]_{\geq 1}}{I_F}\right)_k} \longrightarrow 0,
\]
where in case of odd $k$ all the vector spaces in the sequence are $0$.
But every short exact sequence of vector spaces is split by \cite[Proposition 4.3]{maclane95} and the corollary follows.
\end{proof}

More generally we obtain   
\begin{cor}\label{generalF}
Suppose that for some $k\in \N$ and each {connected} component $F \subset{\J^{-1}(0) \cap M^{S^{1}}}$  we have $H^{{2k-1}}(F)=0$. Then the natural map
$
H^{2k}(\M_{0})\to H^{2k}(\widetilde{\M}_{0})
$ 
is injective.
\end{cor}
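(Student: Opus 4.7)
The plan is to apply the long exact sequence of Theorem \ref{longexactsequence} directly and read off the claim by a parity argument. Concretely, the relevant segment
\[
\cdots \to H^{2k-1}(\widetilde{\M}_{0}) \to \left(\bigoplus_{F \in \mathcal{F}_{0}} H^{*}(F) \otimes \frac{\mathbb{R}[\widetilde{\sigma}_{0}|_{\widetilde F}, \widetilde{\Xi}|_{\widetilde F}]_{\geq 1}}{I_{F}}\right)_{2k-1} \stackrel{\delta}{\to} H^{2k}(\M_{0}) \to H^{2k}(\widetilde{\M}_{0}) \to \cdots
\]
shows by exactness that the injectivity of the last arrow is equivalent to the vanishing of the preceding connecting map $\delta$. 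It therefore suffices to prove that the source of $\delta$ is already zero under the stated hypothesis.

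The crucial step is then a bookkeeping of degrees. Since both generators $[\widetilde{\sigma}_{0}|_{\widetilde F}]$ and $[\widetilde{\Xi}|_{\widetilde F}]$ are of degree $2$, every non-zero element of $\mathbb{R}[\widetilde{\sigma}_{0}|_{\widetilde F}, \widetilde{\Xi}|_{\widetilde F}]_{\geq 1}/I_{F}$ lies in an even degree $\geq 2$. For a tensor $H^{p}(F) \otimes (\,\cdot\,)_{q}$ to contribute to the total degree $2k-1$ we must therefore have $q$ even with $q \geq 2$, and $p = 2k-1-q$ odd. Hence the source of $\delta$ decomposes as a direct sum of pieces each of which has an odd-degree cohomology group of some $F \in \mathcal{F}_{0}$ as a tensor factor. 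Invoking the vanishing of the odd cohomology on each fixed point component $F$ then forces the whole source to vanish, $\delta = 0$ follows, and the injectivity of $H^{2k}(\M_{0}) \to H^{2k}(\widetilde{\M}_{0})$ is established.

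There is no genuine obstacle here: the substantive work is already carried by Theorem \ref{longexactsequence} together with the identification of $H^{*}(C(\M_{0}))$ performed in this section. The only thing one needs to exploit is the even parity of the polynomial factor, so that a single odd total degree $2k-1$ can only be hit by an odd-degree class on the fixed point components. Corollary \ref{generalF} therefore reduces to inspection of the long exact sequence in the appropriate degrees.
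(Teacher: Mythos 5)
Your proof follows the same route as the paper's: isolate the segment of the long exact sequence of Theorem \ref{longexactsequence} ending in $H^{2k}(\M_{0})\to H^{2k}(\widetilde{\M}_{0})$ and kill the term in degree $2k-1$ by a parity count on the degree-$2$ generators $\widetilde{\sigma}_{0}|_{\widetilde F}$ and $\widetilde{\Xi}|_{\widetilde F}$. One caveat: your degree bookkeeping is in fact more careful than the paper's one-line argument, and it exposes a mismatch with the hypothesis as literally stated. The degree-$(2k-1)$ piece of $H^{*}(F)\otimes\mathbb{R}[\widetilde{\sigma}_{0}|_{\widetilde F},\widetilde{\Xi}|_{\widetilde F}]_{\geq 1}/I_{F}$ is $\bigoplus_{j\geq 1}H^{2k-1-2j}(F)\otimes(\cdot)_{2j}$, so the groups that must vanish are $H^{2k-3}(F),H^{2k-5}(F),\dots$; the group $H^{2k-1}(F)$ named in the hypothesis never occurs, because the polynomial factor lives in degrees $\geq 2$. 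Your closing step, ``invoking the vanishing of the odd cohomology on each fixed point component,'' therefore uses the hypothesis in the form given in the paper's introduction (vanishing of all odd cohomology of every $F$, or at least in all odd degrees $\leq 2k-3$) rather than the literal ``for some $k$, $H^{2k-1}(F)=0$.'' Under that stronger reading your argument is complete and correct; under the literal reading neither your argument nor the paper's own proof establishes the vanishing of the degree-$(2k-1)$ term.
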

\begin{proof}
Again, since $H^{{2k-1}}(F)=0$ our long exact sequence from Theorem \ref{longexactsequence} {looks like}
\[
 {\cdots\rightarrow \left(\bigoplus\limits_{{F \in \mathcal{F}_{0}}}H^{*}(F) \otimes \frac{\mathbb{R}[\widetilde{\sigma}_{0}|_{\widetilde F},\widetilde{\Xi}|_{\widetilde F}]_{\geq 1}}{I_F}\right)_{2k-1}=0 \rightarrow H^{2k}\left(\M_{0}\right) \rightarrow H^{2k}(\widetilde{\M}_{0}) \rightarrow \cdots .}
\]
{Thanks to the exactness, the map $H^{2k}\left(\M_{0}\right) \rightarrow H^{2k}(\widetilde{\M}_{0})$ is injective.} 
\end{proof}
%\textcolor{magenta}{[P: Koennte es nicht doch moeglich sein, dies allgemein zu zeigen, indem man die Injektivitaet von $H^{k}\left({\M_{0}};\R\right) \rightarrow H^{k}(\widetilde{\M}_{0})$ zeigt unter Benutzung des Homotopie-Axioms? Nach letzterem koennen  Kohomologieklassen durch Repraesentanten dargestellt werden, welche in hinreichend kleinen Umgebungen von Untermannigfaltigkeiten nur von deren jeweiligen Einschraenkungen auf dieselben abhaengen. Oder aber versuche doch noch einmal, das Argument in preprint.tex zu reparieren bzw. zu Ende zu fuehren... M: Dagegen spricht jedoch folgende Bemerkung:]}
\begin{rem} \label{rem:notinj}  {Note that for finite-dimensional vector spaces $A$ and $B$, the statement that there is  a finite-dimensional vector space $V$ such that $B=A\oplus V$ is equivalent to the statement that $A$ injects into $B$.} In the situation of Corollary \ref{generalF}, the injectivity need not hold in odd degrees, even if $H^k(F)=0$ for all $k\in \N$ and $F\in \F_0$, as will become apparent later in Example \ref{4spheres}. Note that also from the general theory of resolution of singularities we cannot expect that {the natural map $H^{*}(\M_{0}) \longrightarrow H^{*}(\widetilde{\M}_{0})$} is always injective, since resolutions do not always induce injective maps in cohomology as the example of the nodal curve shows, see \cite[Example I.4.9.1]{hartshorne77}. In fact, the nodal curve has non trivial cohomology in positive degrees while the resolution is contractible. 
\end{rem}

\begin{rem}\label{canonicity}
 The isomorphism {$H^{*}(\widetilde{\M}_{0})\cong H^{*}\left(\M_{0}\right) \oplus  \bigoplus_{F \in \mathcal{F}_{0}} \frac{\mathbb{R}[\widetilde{\sigma}_{0}|_{\widetilde F},\widetilde{\Xi}|_{\widetilde F}]_{\geq 1}}{I_F}$ from Corollary \ref{splitting}} can be made explicit by recalling how short exact sequences of vector spaces split, compare \cite[Propositions 4.2 and 4.3]{maclane95}.
% \FBrmk{I removed the abstract explanation how short exact sequences of vector spaces split because it is not really needed for the relevant statement, and in any case it is just basic linear algebra.} 
 More precisely, %let $A,B,C$ be real vector spaces and
%\[
%0 \longrightarrow A \overset{f}{\longrightarrow} B \overset{g}{\longrightarrow} C \longrightarrow 0
%\] 
%a short exact sequence of these. Now take a basis $\{c_{i}\}_{i \in I}$ of $C$. By surjectivity of $g$, there are elements $\{b_{i}\}_{i \in I}$ in $B$ which map to that basis. We can now specify a map
%\[
%h \colon C \longrightarrow B
%\]
%by sending $c_{i}$ to $b_{i}$ because $\{c_{i}\}_{i \in I}$ is a basis. Since $h$ is a right inverse of $g$, the sequence is split and
%\begin{align*}
%A \oplus C &\longrightarrow B, \qquad
%(a,c) \longmapsto f(a)+h(c)
%\end{align*}
%is the splitting isomorphism. By means of  the map $h$ one can also find a projection $p \colon B \rightarrow A$ which is left inverse to $f$. For this one considers the mapping
%\begin{align*}
%B &\longrightarrow \ker(g) \oplus \mathrm{Im}(h), \qquad b \longmapsto (b-h(g(b)))+h(g(b)).
%\end{align*}
%Since  $\mathrm{Im}(h) \cong C$ and $\ker(g)=\mathrm{Im}(f)\cong A$, $f$ being injective, 
%\begin{align*}
%B &\longrightarrow \ker(g)\cong A \qquad  b \longmapsto (b-h(g(b))),
%\end{align*}
%is the desired projection. Applying this to our present situation, we see that 
the splitting isomorphism from Corollary \ref{splitting} and a projection 
\[
H^{*}(\widetilde{\M}_{0}) \longrightarrow H^{*}(\M_{0})
\]
can be made explicit by fixing a basis of {$\bigoplus_{F \in \mathcal{F}_{0}} \frac{\mathbb{R}[\widetilde{\sigma}_{0}|_{\widetilde F},\widetilde{\Xi}|_{\widetilde F}]_{\geq 1}}{I_F}$} and finding preimages of the basis vectors inside $H^{*}(\widetilde{\M}_{0})$.%\FBrmk{Removed the part involving the map $\widetilde R$ which is no longer introduced above (see the corresponding Footnote).}%, or in other words by fixing a right-inverse
%\[
%h \colon \bigoplus\limits_{{F \in \mathcal{F}_{0}}}\frac{\mathbb{R}[\widetilde{\sigma}_{0},\Xi]_{\geq 1}}{I} \longrightarrow H^{*}(\widetilde{\M}_{0})
%\]
%to $\widetilde{R}$.
\end{rem}

\begin{example}\label{2spheres}
As a particularly easy example consider the product $S^{2} \times S^{2}$ with its product symplectic form where we equipped each sphere with its standard volume form. Then the $S^{1}$ action defined by diagonally rotating around the $z$-axis with speed $1$ is a Hamiltonian action with momentum map
\begin{align*}
\J \colon S^{2} \times S^{2} &\longrightarrow \mathbb{R}, \qquad ((x_{i},y_{i},z_{i}))_{i=1,2} \longmapsto z_{1}+z_{2}.
\end{align*}
Now, zero is not a regular value of $\J$ as the zero level contains the two fixed points $((0,0,1),(0,0,-1))$ and $((0,0,-1),(0,0,1))$. In fact, the zero level set is a suspended $2$-torus and the symplectic quotient $\M_{0}$ is a suspension of $S^{1}$ and in particular homeomorphic to $S^{2}$. From Proposition \ref{bisphere} we see that in this case all the occuring exceptional divisors of the partial desingularization are in fact points, as the occuring spheres $S_{F}^{\pm}$ in the weight spaces are one-dimensional. Thus, for all fixed points $F$ in the zero level set we have $F=\widetilde{F}$ and the long exact sequence from Theorem \ref{longexactsequence} shows
\[
\beta^{*} \colon H^{k}(\M_{0}) \cong H^{k}(\widetilde{\M}_{0}) \qquad \forall k \in \mathbb{N}
\]
and resolution cohomology and ordinary cohomology of $\M_{0}$ are isomorphic. Moreover, using techniques from toric geometry, one shows that the curvature class of the partial desingularization $\widetilde{\M_{0}}$ vanishes in this case.  In fact, $M=S^{2} \times S^{2}$ is a toric symplectic manifold acted upon by the two-torus $T^{2}=S^{1} \times S^{1}$ where each circle rotates one of the spheres around the $z$-axis. A momentum map for this action is given by
\begin{align*}
\J' \colon M &\longrightarrow \mathbb{R}^{2}, \qquad ((x_{i},y_{i},z_{i}))_{i=1,2} \longmapsto (z_{1},z_{2}),
\end{align*} 
where we identified the dual of the Lie algebra of $T^{2}$ with $\mathbb{R}^{2}$. Our original momentum map $\J$ arises from $\J'$ by composing it with the map
\begin{align*}
\mathbb{R}^{2} \longrightarrow \mathbb{R}, \qquad (x,y)  \longmapsto x+y
\end{align*}
as we consider the diagonal $S^{1}$-action. Now, the momentum image of $\J'(M) \subset \mathbb{R}^{2}$ is the convex polytope
\begin{center}
\begin{tikzpicture}
\draw (0,0) -- (2,0);
\draw (2,0) -- (2,2);
\draw (2,2) -- (0,2);
\draw (0,2) -- (0,0);

\draw (0,2) -- (2,0);
\end{tikzpicture}
\end{center}
where the drawn anti-diagonal depicts $\J'(\J^{-1}(0))$. From the work of Guillemin--Sternberg, see \cite[Theorem 6.1]{guillemin-sternberg89}, we already know, that $\M_{0}$ will be a symplectic manifold, even though zero is not a regular value as all fixed point on $M$ have either index or coindex $2$, on which the complementary $S^{1} \subset T^{2}, z \mapsto (z,1)$ acts in a Hamiltonian way with momentum map $\J''$ whose momentum image we obtain by the projection the anti-diagonal as  

\begin{center}
\begin{tikzpicture}
\draw (0,0) -- (2,0);
\draw (2,0) -- (2,2);
\draw (2,2) -- (0,2);
\draw (0,2) -- (0,0);

\draw (0,2) -- (2,0);

\draw[->] (1,-0.5) -- (1,-1);

\draw (0,-1.5) -- (2,-1.5);
\end{tikzpicture}
\end{center}
Now, $\M_{0}$ is a toric symplectic two-manifold whose momentum polytope associated to the $S^{1}$-action is an interval. Thus, $\M_{0}$ has to be $S^{2}$ by the classification of symplectic toric manifolds due to Delzant, see \cite[Theorem IV.4.20 and Section VII.2]{audin2004}, \cite[Chapter 1]{guillemin94} and \cite[Theorem 28.2]{cannas-da-silva}. A similar reasoning applies to the blow-up $\BLS^{\mathbb{C}}$ whose momentum polytope is

\begin{center}
\begin{tikzpicture}
\draw (0.25,0) -- (1.75,0);
\draw (1.75,0) -- (2,0.25) ;

\draw (2,0.25) -- (2,1.75);
\draw (2,1.75) -- (1.75,2);
\draw (1.75,2) -- (0.25,2);
\draw (0.25,2) -- (0,1.75);
\draw (0,1.75) -- (0,0.25);
\draw (0,0.25) -- (0.25,0);
\end{tikzpicture}
\end{center}
by \cite[Remark 1.5]{lerman95}. Now, we may depict the strict transform as the anti-diagonal in

\begin{center}
\begin{tikzpicture}
\draw (0.25,0) -- (1.75,0);
\draw (1.75,0) -- (2,0.25) ;
\draw (2,0.25) -- (2,1.75);
\draw (2,1.75) -- (1.75,2);
\draw (1.75,2) -- (0.25,2);
\draw (0.25,2) -- (0,1.75);
\draw (0,1.75) -- (0,0.25);
\draw (0,0.25) -- (0.25,0);

\draw (1.875,0.125) -- (0.125,1.875);
\draw[dashed] (1.9375,0.1875) -- (0.1875,1.9375);
\draw[dashed] (1.8125,0.0625) -- (0.0625,1.8125);
\end{tikzpicture}.
\end{center}
The dashed part represents a neighborhood of $\widetilde{\J}^{-1}(0)$ in $\BLS^{\mathbb{C}}(M)$ and the image tells us that it is $T^{2}$-equivariantly symplectomorphic to a neighborhood of $S^{2} \times S^{1}$ in $S^{2} \times S^{2}$ as depicted in 
\begin{center}
\begin{tikzpicture}
\draw (0,0) -- (2,0);
\draw (2,0) -- (2,2);
\draw (2,2) -- (0,2);
\draw (0,2) -- (0,0);

\draw (1,0) -- (1,2);
\draw[dashed] (0.75,0) -- (0.75,2);
\draw[dashed] (1.25,0) -- (1.25,2);
\end{tikzpicture}
\end{center}
Furthermore,  the bundle $\widetilde{\J}^{-1}(0) \rightarrow \widetilde{\M_{0}}$ is trivial.
\end{example}

\begin{example}\label{3sphere}
Moving on from the previous example to the case of the standard diagonal $S^{1}$-action on $M:=S^{2} \times S^{2} \times S^{2}$ with momentum map 
\begin{align*}
\J \colon S^{2} \times S^{2} \times S^{2} &\longrightarrow \mathbb{R}, \qquad ((x_{i},y_{i},z_{i}))_{i=1,2,3} \longmapsto z_{1}+z_{2}+z_{3}+1
\end{align*}
we see that the critical values are $-2,0,2$ and $4$. There are $3$ isolated fixed points in the $0$-level set and the isotropy representation at each of those has one positive and two negative weights. Again, using toric geometry and the fact that $M$ is indeed a toric symplectic manifold, one proves that $\M_{0}$ is homeomorphic to $\CP^{2}$ and $\widetilde{\M_{0}}$ is homeomorphic to the blow-up of $\M_{0}$ in three points. Moreover, the excetional bundles $\widetilde{F}$ are equal to $\CP^{1}$s in this case and the curvature of $\M_{0}$ does not vanish. Thus, in this examples all terms in the long exact sequence from Theorem \ref{longexactsequence} are excplicitely known. In fact, we consider $M$ as a toric Hamiltonian $T^{3}$-mainfold, whose momentum polytope is a cube. By arguments similar to those in the case of two spheres, one obtains that the momentum image of a complementary $T^{2}$-action on the smooth quotient $\M_{0}$ is 
\begin{center}
\begin{tikzpicture}
\draw (0,0) -- (2,0);
\draw (2,0) -- (0,2);
\draw (0,2) -- (0,0);
\end{tikzpicture}
\end{center}
and $\M_{0}$ is symplectomorphic to $\CP^{2}$ while the partial desingularization $\widetilde{\M_{0}}$ has momentum image
\begin{center}
\begin{tikzpicture}
\draw (0.25,0) -- (1.75,0);
\draw (1.75,0) -- (1.75,0.25);
\draw (1.75,0.25) -- (0.25,1.75);
\draw (0.25,1.75)-- (0,1.75) ;
\draw (0,1.75) -- (0,0.25);
\draw (0,0.25) -- (0.25,0);
\end{tikzpicture}
\end{center}
and is therefore symplectomorphic to $\CP^{2}$ blown-up in the three corner points. Now the bundle $\widetilde{\J}^{-1}(0) \rightarrow \widetilde{\M_{0}}$ cannot be trivial since it is equal to the Hopf bundle $S^{3} \rightarrow \CP^{1}$, when restricted to the exceptional loci of the blow-up
\[
\mathrm{Bl}^{\mathbb{C}}_{3}(\CP^{2}) \longrightarrow \CP^{2}
\]
and the curvature class of $\widetilde{\M_{0}}$ does not vanish.
\end{example}

\section{Singular Kirwan surjectivity}\label{sec:singkirwansurj}

Let $(M,\sigma)$ be a connected symplectic manifold carrying a Hamiltonian group action of a  compact Lie group $G$ with equivariant momentum map $\J \colon M \rightarrow \mathfrak{g}^{*}$ and consider the associated {symplectic quotient} $\M_0:=\J^{-1}(0)/G$. One of the main tools in the study of its cohomology is the \emph{Kirwan map}, which in the case when $0$ is a regular value of the momentum map is defined as the composition 
\begin{center}
\begin{tikzcd}
\kappa: \, H^{*}_{G}(M) \arrow{r}{\iota^{*}} &H^{*}_{G}(\J^{-1}(0)) \arrow{r}{\Car} &H^{*}_{\mathrm{bas} \, G}(\J^{-1}(0)) \arrow{r}{(\pi^{*})^{-1}} &H^{*}(\M_{0}),
\end{tikzcd}
\end{center}
where $\iota: \J^{-1}(0) \rightarrow M$ and $\pi:\J^{-1}(0) \rightarrow \M_0$ are the natural injection and projection, respectively, 
%where $\iota$ and $\pi$ are sitting in
%\begin{center}
%\begin{tikzcd}
%M &J^{-1}(0) \arrow{l}{\iota} \arrow{d}{\pi}\\
% &\M_{0},
%\end{tikzcd}
%\end{center}
and $\Car \colon H^{*}_{G}(\J^{-1}(0)) \rightarrow H^{*}_{\mathrm{bas} \, G}(\J^{-1}(0))$  is the \emph{Cartan isomorphism} of equivariant and basic cohomology. For $G=S^1$ it is explicitely given by \cite{duistermaat94}
\[
\Car\left(\sum\limits_{I} \omega_{I} \cdot x^{I}\right):= \sum\limits_{I} \omega_{I} \wedge \Xi^{I} - \Theta \wedge \sum\limits_{I} i_{\overline{X}}\omega_{I} \wedge \Xi^{I},
\]
where $\Theta \in \Omega^{1}(M)$ is a connection form, $\Xi$ its curvature and $X$ a generator of $\mathfrak{g}$ with $\Theta(\overline{X})=1$. As a major application of our theory of resolution differential forms developed in the previous section,  in this section we shall extend the above definition of the Kirwan map to the case where $0$ is not necessarily a regular value of $\J$ and study its surjectivity.

 \subsection{The resolution Kirwan map}  To begin, we look at our complex of resolution differential forms in the light of $\mathfrak{g}$-differential graded algebras, see \cite[Section 4]{goertsches-nozawa-toeben12} and \cite{guillemin-sternberg99} for a systematic exposition. %Let us briefly recall their definition. 
\begin{definition}
Let $\mathfrak{g}$ be a finite-dimensional Lie algebra and $A=\bigoplus A_{k}$ a $\mathbb{Z}$-graded algebra. We call $A$ a \emph{$\mathfrak{g}$-differential graded algebra ($\mathfrak{g}$-dga)} if there is a derivation $d \colon A \rightarrow A$ of degree $1$, together with derivations $i_{X} \colon A \rightarrow A$ of degree $-1$ and $L_{X} \colon A \rightarrow A$ of degree $0$ for all $X \in \mathfrak{g}$, $i_{X}$ and $L_{X}$ depending linearly on $X$, such that
\begin{multicols}{2}
\begin{enumerate}
\item $d^{2}=0$,
\item $i_ {X}^{2}=0$,
\item $[L_{X},L_{Y}]=L_{[X,Y]}$,
\item $[L_{X},i_{Y}]=i_{[X,Y]}$,
\item $[d,L_{X}]=0$,
\item $L_{X}=d i_{X}+i_{X}d$.
\end{enumerate} 
\end{multicols}
\end{definition}
As an example, we can think about $A$ being the complex of differential forms on a $G$-manifold, $d$ the exterior differential, $i_{X}$ contraction with the fundamental vector field of $X$ and $L_{X}$ the Lie derivative in direction of the fundamental vector field. From such a $\mathfrak{g}$-dga $A$ one forms two new complexes in a standard manner, namely the \emph{Cartan complex of $A$}
\[
C_{G}(A):=\left( S(\mathfrak{g^{*}})\otimes A \right)^{\mathfrak{g}}
\]
and the \emph{basic subcomplex of $A$}
\[
A_{\mathrm{bas} \, \mathfrak{g}}:=\{\omega \in A \mid i_{X}\omega=0 \: \text{and} \: L_{X}\omega=0 \: \, \forall X \in \mathfrak{g}\}.
\]
In our setting, we introduce the $\mathfrak{g}$-dgas 
\begin{align}\label{eq:dgas}
\begin{split}
\Omega^{*}(\J^{-1}(0))&:=\left\{ \omega \in \Omega(\J^{-1}(0)^{\top}) \mid \exists \eta \in \Omega(M) \colon \iota_{\top}^{*} \eta = \omega \right\}, \\
\widetilde{\Omega}^{*}(\J^{-1}(0)):&=\left\{ \omega \in \Omega(\J^{-1}(0)^{\top}) \mid \exists \eta \in \Omega(\BLS (M)) \colon (\iota'_{\top})^{*} \eta = \beta_{\top}^{*}\omega \right\}.
\end{split}
\end{align}
As it turns out, the complexes of differential and resolution  forms on $\M_{0}$ are isomorphic to the basic subcomplexes of these $\mathfrak{g}$-dgas, respectively,  via the pullback associated with the quotient map $\pi_{\top} \colon J^{-1}(0)^{\top} \rightarrow \M_{0}^{\top}$ because $S^1$ is connected. The associated 
%Cartan models will be denoted by $C_{G}(\J^{-1}(0))$ and $\widetilde{\J^{-1}(0)}_{G}(\J^{-1}(0))$,  their 
equivariant cohomology groups are
\bqn
H^{*}_{{S^1}}(\J^{-1}(0)):= H^\ast(C_{S^1}(\Omega^\ast(\J^{-1}(0))),d_{S^1}), \qquad H^\ast(C_{S^1}(\widetilde \Omega^\ast(\J^{-1}(0))),d_{{S^1}}),
\eqn
 respectively, where $d_{S^1}$ is the equivariant differential. The crucial difference between $\Omega^{*}(\J^{-1}(0))$ and $\widetilde{\Omega}^{*}(\J^{-1}(0))$ and the driving force behind our investigations is that $\Omega^{*}(\J^{-1}(0))$ is not invariant under multiplication with a connection form, while $\widetilde{\Omega}^{*}(\J^{-1}(0))$ is. This makes $\widetilde{\Omega}^{*}(\J^{-1}(0))$ a $W^{*}$-module, and even a $\mathfrak{g}$-dga of type (C), in the sense of \cite[Definition 3.4.1]{guillemin-sternberg99} and \cite[Definition 2.3.4]{guillemin-sternberg99}. This is a powerful property, because for such a $W^{*}$-module $A$, the map
\begin{align}
\label{eq:22.04.2022}
A_{\mathrm{bas} \, \mathfrak{g}} &\rightarrow C_{G}(A),\qquad \omega \mapsto 1 \otimes \omega,
\end{align}
 induces an isomorphism in cohomology with homotopy inverse given by the \emph{Cartan map} 
 $$\Car \colon C_{G}(A) \rightarrow A_{\mathrm{bas} \, \mathfrak{g}},$$ 
 see \cite[Sections 4 and 5]{guillemin-sternberg99}. In fact, there it is proved that $\Car$ is a quasi-isomorphism. But clearly, $\Car(1\otimes\omega)=\omega$ for $\omega \in  A_{\mathrm{bas} \, \mathfrak{g}}$, so (\ref{eq:22.04.2022}) induces the inverse to the Cartan map in cohomology. We are now ready to define the resolution Kirwan map
\bq\label{eq:20.04.2022}
\mathcal{K} \colon H^{\ast}_{{S^1}}(M) \rightarrow H^{*}(\widetilde{\Omega}^{*}(\M_{0}),d)
\eq
from the equivariant cohomology $H^{\ast}_{{S^1}}(M)$ to the resolution cohomology $H^{*}(\widetilde{\Omega}^{*}(\M_{0}),d)$.
 
 \begin{definition}\label{def:20.04.2022}  The  \emph{resolution Kirwan map} $\mathcal K$ is defined as the composition of maps  \\
\begin{center}
\begin{tikzcd}
H^{*}_{{S^1}}(M) \arrow{r}{\iota_{\top}^{*}} \arrow{ddrr}{\mathcal{K}} & H^{*}_{{S^1}}\left(\J^{-1}(0)\right) \arrow{r}{\mathrm{inc}} &H^{*}\Big(C_{S^1}\big (\widetilde{\Omega}^{*}\left(\J^{-1}(0)\big )\right),d_{{S^1}}\Big) \arrow{d}{\Car} \\ 
&&  H^{*}\left(\widetilde{\Omega}^{*}\left(\J^{-1}(0)\right)_{\mathrm{bas} \, \mathfrak{g}},d\right) \arrow{d}{\left(\pi_{\top}^{*}\right)^{-1}} \\
&& H^{*}(\widetilde{\Omega}^{*}(\M_{0}),d) 
\end{tikzcd}
\end{center}
\end{definition}
\begin{rem}\label{rem:15.07.2023}
Another way to view the resolution Kirwan map follows from Proposition \ref{resolutionforms}, Corollary \ref{TransformZeroSet} and Lemma \ref{EquivTransform}.  From this perspective, the resolution Kirwan map $\mathcal{K}$ is equal to the composition of $\beta^{*} \colon H^{*}_{{S^1}}(M) \rightarrow H^{*}_{{S^1}}(\BLS(M))$ with the regular Kirwan map of the blow-up $\kappa \colon H^{*}_{{S^1}}(\BLS(M)) \rightarrow H^{*}(\widetilde{\M_{0}})$, so that 
\[
\mathcal{K} \colon H^{*}_{{S^1}}(M) \overset{\beta^{*}}{\longrightarrow} H^{*}_{{S^1}}(\BLS(M))\overset{\kappa}{\longrightarrow} H^{*}(\widetilde{\M_{0}}).
\]
\end{rem}

\subsection{The surjectivity theorem}

An immediate question is whether the resolution Kirwan map is non-trivial; more precisely, how large its image is. To study this question, recall from Section \ref{sec:resdiff} that there is a natural map  $$H^{*}(\M_{0};\R)\to  H^{*}(\widetilde{\Omega}^{*}(\M_{0}),d)$$ from the singular cohomology $H^{*}(\M_{0};\R)$ with real coefficients\footnote{In this section we write singular cohomologies always with the explicit coefficient ring $\R$ in order to distinguish them notationally from the other considered cohomologies.} to the resolution cohomology, the target space of $\mathcal K$. As noted in Remark \ref{rem:notinj}, this natural map  is not necessarily injective, but we have seen in Corollary \ref{generalF} that it is at least injective in even degrees under appropriate assumptions. This shows that the image  of $H^{*}(\M_{0};\R)$ in $H^{*}(\widetilde{\Omega}^{*}(\M_{0}),d)$ is an interesting space, even if in general it contains less information than the full singular cohomology $H^{*}(\M_{0};\R)$. The following main result of this section can therefore be seen as a form of Kirwan surjectivity in the singular setting:
\begin{thm}\label{surjectivity}The image of the resolution Kirwan map $\mathcal{K} \colon H^{\ast}_{{S^1}}(M) \rightarrow H^{*}(\widetilde{\Omega}^{*}(\M_{0}),d)$ contains the image of the natural map $H^{*}(\M_{0};\R)\to  H^{*}(\widetilde{\Omega}^{*}(\M_{0}),d)$.
\end{thm}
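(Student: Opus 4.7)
The plan is to trace the given singular class through the definition of $\mathcal{K}$ step by step and lift it to equivariant cohomology on $M$.

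First I would pick a closed representative $\omega_{0}\in\Omega(\M_{0})$ of the given class together with an $S^{1}$-invariant form $\eta\in\Omega(M)^{S^{1}}$ satisfying $\iota_{\top}^{\ast}\eta=\pi_{\top}^{\ast}\omega_{0}$; such an $\eta$ exists by the very definition of $\Omega(\M_{0})$, and invariance is achieved by averaging. Under the natural inclusion $\Omega(\M_{0})\hookrightarrow\widetilde{\Omega}(\M_{0})$, the class $[\omega_{0}]$ is sent to its own class in $H^{\ast}(\widetilde{\Omega}(\M_{0}),d)$, and this is the target that $\mathcal{K}$ must hit.

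The core of the argument relies on the type (C) structure of the $\g$-dga $\widetilde{\Omega}^{\ast}(\J^{-1}(0))$. Identifying $\widetilde{\Omega}(\M_{0})\simeq \widetilde{\Omega}^{\ast}(\J^{-1}(0))_{\mathrm{bas}\,\g}$ via $\pi_{\top}^{\ast}$ as in Proposition \ref{resolutionforms}, the target corresponds to the class of $\iota_{\top}^{\ast}\eta=\pi_{\top}^{\ast}\omega_{0}$. Since $\widetilde{\Omega}^{\ast}(\J^{-1}(0))$ admits a connection form extended from $\widetilde{\J^{-1}(0)}$, where $S^{1}$ acts locally freely, the Cartan map is a quasi-isomorphism with quasi-inverse induced by $\omega\mapsto 1\otimes\omega$, so the target corresponds equivalently to the class $[1\otimes\iota_{\top}^{\ast}\eta]\in H^{\ast}(C_{S^{1}}(\widetilde{\Omega}^{\ast}(\J^{-1}(0))),d_{S^{1}})$. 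The key observation is that $\iota_{\top}^{\ast}\eta=\pi_{\top}^{\ast}\omega_{0}$ is both closed (because $\omega_{0}$ is) and $S^{1}$-horizontal (being pulled back from the quotient), so $d_{S^{1}}(1\otimes\iota_{\top}^{\ast}\eta)=0$ already in the smaller Cartan complex $C_{S^{1}}(\Omega^{\ast}(\J^{-1}(0)))$. Consequently, $[1\otimes\iota_{\top}^{\ast}\eta]$ is the image of a class in $H^{\ast}_{S^{1}}(\J^{-1}(0))$ under the inclusion into $H^{\ast}(C_{S^{1}}(\widetilde{\Omega}^{\ast}(\J^{-1}(0))))$.

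It then remains to lift this class along the restriction $\iota^{\ast}\colon H^{\ast}_{S^{1}}(M)\to H^{\ast}_{S^{1}}(\J^{-1}(0))$, since any such lift $[\alpha]\in H^{\ast}_{S^{1}}(M)$ will satisfy $\mathcal{K}([\alpha])=[\omega_{0}]$ by direct inspection of Definition \ref{def:20.04.2022}. This is Kirwan's classical surjectivity theorem, proved via Morse--Bott theory on $\|\J\|^{2}$ and valid also when $0$ is a singular value of $\J$. The hard part is that Kirwan surjectivity is usually stated in the Borel model, whereas here $H^{\ast}_{S^{1}}(\J^{-1}(0))$ is defined via the Cartan-type complex \eqref{eq:dgas}; bridging the two requires a Sjamaar-style sheaf-theoretic identification on the stratified space $\J^{-1}(0)$, parallel to the treatment of $\M_{0}$ recalled in Section \ref{sec:2.1}, and this is the technically most delicate step of the proof.
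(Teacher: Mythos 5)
Your overall architecture agrees with the paper's up to the last step: represent the singular class by $\omega=\iota_{\top}^{*}\eta=\pi_{\top}^{*}\omega_{0}$, note that $\omega$ is closed and horizontal so that $[1\otimes\omega]$ defines a class in $H^{*}_{S^1}(\J^{-1}(0))$ with $\Car([1\otimes\omega])=[\omega]$, and reduce the theorem to producing a class in $H^{*}_{S^1}(M)$ whose image under $\mathrm{inc}\circ\iota_{\top}^{*}$ coincides with $[1\otimes\omega]$ in the Cartan complex of resolution forms. All of this matches the paper.

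The gap is in how you propose to finish. You want to lift $[1\otimes\omega]$ along $\iota_{\top}^{*}\colon H^{*}_{S^1}(M)\to H^{*}_{S^1}(\J^{-1}(0))$, and you correctly observe that invoking Kirwan surjectivity here requires comparing the Cartan-type complex $C_{S^1}(\Omega^{*}(\J^{-1}(0)))$ with Borel equivariant cohomology of the stratified space $\J^{-1}(0)$; but you leave that comparison as an unproven ``Sjamaar-style sheaf-theoretic identification,'' which is nowhere established in the paper, is not obviously true, and would in addition have to be shown compatible with $\iota^{*}$. This is exactly the difficulty the actual proof is engineered to sidestep. The paper never inverts any comparison map on the singular space $\J^{-1}(0)$: it only uses the one-directional cochain map $\varphi\colon H^{*}_{S^1}(\J^{-1}(0))\to H^{*}_{S^1}(\J^{-1}(0);\R)$, $\iota_{\top}^{*}\eta\mapsto \iota^{*}(\Psi_{dR}^{S^1}\eta)$, applies Kirwan surjectivity \cite{HK16} in the Borel model to find $[\eta']\in H^{*}_{S^1}(M)$ with $\varphi([1\otimes\omega])=\varphi(\iota_{\top}^{*}[\eta'])$, and then pushes this equality forward by $\Phi\circ\beta^{*}$ into $H^{*}(C_{S^1}(\widetilde{\Omega}^{*}(\J^{-1}(0))),d_{S^1})$. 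Here $\Phi$ \emph{is} an isomorphism (Lemma \ref{EquivTransform}) because the strict transform $\widetilde{\J^{-1}(0)}$ is a smooth $S^1$-manifold, so only the ordinary equivariant de Rham theorem is needed; and since $\Phi\circ\beta^{*}\circ\varphi=\mathrm{inc}$, one obtains $\mathrm{inc}([1\otimes\omega])=\mathrm{inc}(\iota_{\top}^{*}[\eta'])$, which suffices to conclude $\mathcal{K}([\eta'])=[\rho]$. The idea you are missing is that one need not lift $[1\otimes\omega]$ in $H^{*}_{S^1}(\J^{-1}(0))$ itself, only its image under $\mathrm{inc}$ in the larger complex of resolution forms, and that larger complex is computed by a smooth manifold. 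As written, your argument has an unfilled hole at its most delicate point, and the route you sketch for closing it is both harder than necessary and unsupported.
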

In order to prove  Theorem \ref{surjectivity} we need some preparations. We want to prove Theorem \ref{surjectivity} by evoking the equivariant de Rham isomorphism on the one hand and surjectivity of
\[
\iota^{*} \colon H^{*}_{{S^1}}(M) \rightarrow H^{*}_{{S^1}}(J^{-1}(0)),
\]
on the other hand, which was proved by Kirwan using Morse--Bott--Kirwan theory  \cite[Theorem 8.1]{HK16}. Note that for any homomorphism $\Psi$ between the cohomologies of two cochain complexes of $\mathbb{R}$-vector spaces there exists a cochain map  inducing $\Psi$, which follows from the fact that every short exact sequence of $\mathbb{R}$-vector spaces splits (c.f.\ \cite{weibel94}[Exercise 1.1.3 and beginning of Section 1.4]). Thus, the equivariant de Rham isomorphism
\[
\Psi_{dR}^{{S^1}} \colon H^{*}_{{S^1}}(M) \rightarrow H^{*}_{{S^1}}(M;\R)
\]
is induced by a cochain map
\[
\Psi_{dR}^{{S^1}} \colon C_{{S^1}}(M) \rightarrow S_{{S^1}}(M;\R),
\]
which we denote by the same letter. Here $S_{{S^1}}(-;\R)$ denotes the cochain complex of singular equivariant cochains with real coefficients.
%so that we may define a morphism
%\[
%\varphi \colon  H^{*}_{{S^1}}(J^{-1}(0)) \rightarrow H^{*}_{{S^1}}(J^{-1}(0);\R)
%\]
%as follows. 
For some $\omega=\iota_{\top}^{*}\eta\in \Omega^{*}(\J^{-1}(0))$ we set 
\[
\varphi(\omega):=\iota^{*}(\Psi_{dR}^{{S^1}}\eta)\in S_{{S^1}}(M;\R).
\]
This map is a well-defined cochain map since the following diagram
\begin{center}
\begin{tikzcd}
C_{{S^1}}(M) \arrow{d}{\iota_{\top}^{*}} \arrow{r}{\Psi_{dR}^{{S^1}}} &S_{{S^1}}(M;\R) \arrow{d}{\iota^{*}}\\
C_{{S^1}}(\Omega(J^{-1}(0))) \arrow{d} \arrow{r}{\varphi} &S_{{S^1}}(J^{-1}(0);\R) \arrow{d}{\mathrm{res}}\\
C_{{S^1}}(J^{-1}(0)^{\top})  \arrow{r}{\Psi_{dR}^{{S^1}}} &S_{{S^1}}(J^{-1}(0)^{\top};\R)
\end{tikzcd}
\end{center}
commutes and $J^{-1}(0)^{\top}$ is dense in $J^{-1}(0)$, and we obtain a well-defined induced map
\[
\varphi \colon  H^{*}_{{S^1}}(J^{-1}(0)) \rightarrow H^{*}_{{S^1}}(J^{-1}(0);\R).
\]
By commutativity of
\begin{center}
\begin{tikzcd}
H^{*}_{{S^1}}(M) \arrow{d}{\iota_{\top}^{*}} \arrow{r}{\Psi_{dR}^{{S^1}}} &H^{*}_{{S^1}}(M;\R) \arrow[two heads]{d}{\iota^{*}}\\
H^{*}_{{S^1}}(J^{-1}(0)) \arrow{r}{\varphi} &H^{*}_{{S^1}}(J^{-1}(0);\R) 
\end{tikzcd}
\end{center}
one sees that $\varphi$ is surjective. Next, we want to relate the cohomology of the equivariant resolution forms to the cohomology of the strict transform $\widetilde{\J^{-1}(0)}$. Analogously to Proposition \ref{resolutionforms} we have 
\begin{lem}\label{EquivTransform}
The map $\Phi$ defined as composition
\begin{multline*}
\Phi \colon H_{{S^1}}^{*}(\widetilde{\J^{-1}(0)};\mathbb{R}) \overset{(\Psi_{dR}^{{S^1}})^{-1}}{\rightarrow}  H_{{S^1}}^{*}(\widetilde{\J^{-1}(0)}) \overset{(\widetilde{\iota}_{\top})^{*}}{\rightarrow}  H_{{S^1}}^{*}(\beta^{-1}(J^{-1}(0)^{\top})) \\\overset{(\beta_{\top}^{*})^{-1}}{\longrightarrow}  H_{{S^1}}^{*}(C_{{S^1}}(\widetilde{\Omega}(J^{-1}(0))),d_{{S^1}})
\end{multline*}
is an isomorphism.
\end{lem}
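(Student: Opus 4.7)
My plan is to imitate the argument of Proposition~\ref{resolutionforms}(3) in the $S^{1}$-equivariant setting. The first arrow $(\Psi_{dR}^{S^{1}})^{-1}$ is the inverse of the equivariant de Rham isomorphism for the smooth $S^{1}$-manifold $\widetilde{\J^{-1}(0)}$, which is automatically an isomorphism, so it suffices to show that the composition of the remaining two maps is an isomorphism. I would obtain this by first exhibiting a cochain-level isomorphism of $\mathfrak{g}$-differential graded algebras
\[
R \colon \Omega(\widetilde{\J^{-1}(0)}) \;\longrightarrow\; \widetilde{\Omega}(\J^{-1}(0)), \qquad \tilde{\omega} \longmapsto (\beta_{\top}^{*})^{-1}\bigl((\widetilde{\iota}_{\top})^{*}\tilde{\omega}\bigr),
\]
and then applying the Cartan complex functor, which turns an equivariant isomorphism of $\mathfrak{g}$-dgas into an isomorphism of Cartan complexes and hence of equivariant cohomologies.

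To check that $R$ is well-defined, I would use that by Proposition~\ref{prop:desingzero} the strict transform $\widetilde{\J^{-1}(0)}$ is a closed smooth $S^{1}$-invariant submanifold of $\BLS(M)$, so any $\tilde{\omega}\in\Omega(\widetilde{\J^{-1}(0)})$ admits a smooth extension $\eta\in\Omega(\BLS(M))$ via a tubular neighborhood and a cut-off function. The identity $(\iota'_{\top})^{*}\eta=\beta_{\top}^{*}R(\tilde{\omega})$ then shows $R(\tilde{\omega})\in\widetilde{\Omega}(\J^{-1}(0))$. Injectivity of $R$ follows from the density of $\beta^{-1}(\J^{-1}(0)^{\top})$ in $\widetilde{\J^{-1}(0)}$, since a smooth form is determined by its restriction to an open dense subset. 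For surjectivity, given $\omega\in\widetilde{\Omega}(\J^{-1}(0))$ and a witnessing $\eta\in\Omega(\BLS(M))$ with $(\iota'_{\top})^{*}\eta=\beta_{\top}^{*}\omega$, the restriction $\tilde{\omega}:=\eta|_{\widetilde{\J^{-1}(0)}}$ lies in $\Omega(\widetilde{\J^{-1}(0)})$ and satisfies $R(\tilde{\omega})=\omega$ by the computation $\widetilde{\iota}_{\top}^{*}\tilde{\omega}=\eta|_{\beta^{-1}(\J^{-1}(0)^{\top})}=(\iota'_{\top})^{*}\eta=\beta_{\top}^{*}\omega$.

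All operations involved---restriction to a smooth submanifold, pullback by the equivariant diffeomorphism $\beta_{\top}$, and the $\mathfrak{g}$-dga operations $d$, $i_{X}$, $L_{X}$---are $S^{1}$-equivariant and intertwine, so $R$ is an isomorphism of $\mathfrak{g}$-dgas. Passing to Cartan complexes and then to cohomology realizes the composition $(\beta_{\top}^{*})^{-1}\circ(\widetilde{\iota}_{\top})^{*}$ as an isomorphism $H^{*}_{S^{1}}(\widetilde{\J^{-1}(0)})\to H^{*}(C_{S^{1}}(\widetilde{\Omega}(\J^{-1}(0))),d_{S^{1}})$, and composing with $(\Psi_{dR}^{S^{1}})^{-1}$ gives $\Phi$. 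I do not anticipate any serious obstacle; the one point requiring care is matching the image of $(\widetilde{\iota}_{\top})^{*}$ on cochains with $\beta_{\top}^{*}(\widetilde{\Omega}(\J^{-1}(0)))$, which is essentially a direct reformulation of the definition of resolution forms.
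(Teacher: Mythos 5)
Your proposal is correct and follows essentially the same route as the paper: the paper's proof likewise inverts the composition by extending a form from the closed $S^1$-invariant submanifold $\widetilde{\J^{-1}(0)}$ to $\BLS(M)$ and restricting, with well-definedness resting on exactly the two facts you invoke (density of $\beta^{-1}(\J^{-1}(0)^{\top})$ in $\widetilde{\J^{-1}(0)}$ and closedness/invariance of the strict transform). Your version merely spells out the cochain-level $\mathfrak{g}$-dga isomorphism and the passage through the Cartan functor more explicitly than the paper does.
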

\begin{proof}
The inverse is given by
\[
\omega \mapsto \beta_{\top}^{*}\omega=(\iota'_{\top})^{*}\widetilde{\eta} \mapsto (\iota')^{*}\widetilde{\eta} \mapsto \Psi_{dR}^{{S^1}}((\iota')^{*}\widetilde{\eta} ),
\]
which is well defined because $\beta^{-1}(J^{-1}(0)^{\top})$ is dense in $\widetilde{\J^{-1}(0)}$ and the strict transform $\widetilde{\J^{-1}(0)}$ is a ${S^1}$-invariant submanifold of $\BLS(M)$.
\end{proof}

\begin{lem}
The pull-back $\pi_{\top}^{*} \colon \widetilde{\Omega}^{*}(\M_{0}) \rightarrow \widetilde{\Omega}^{*}\left(\J^{-1}(0)\right)_{\mathrm{bas} \, \mathfrak{g}}$ is an isomorphism of real algebras.
\end{lem}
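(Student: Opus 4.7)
The plan is to deduce the lemma from the standard differential-geometric fact that, since the $S^{1}$-action on $\J^{-1}(0)^{\top}$ is free (which is guaranteed by the effectiveness assumption made in Section 1), the projection $\pi_{\top}\colon \J^{-1}(0)^{\top}\to \M_{0}^{\top}$ is a principal $S^{1}$-bundle. Consequently the pullback $\pi_{\top}^{*}$ is already known to be an isomorphism of graded real algebras from $\Omega(\M_{0}^{\top})$ onto the basic subcomplex $\Omega(\J^{-1}(0)^{\top})_{\mathrm{bas}\,\mathfrak{g}}$. Once this preliminary is in hand, it suffices to verify that this isomorphism restricts to a bijection between the two subcomplexes $\widetilde{\Omega}^{*}(\M_{0})$ and $\widetilde{\Omega}^{*}(\J^{-1}(0))_{\mathrm{bas}\,\mathfrak{g}}$.

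For well-definedness I would start from $\omega_{0}\in \widetilde{\Omega}^{*}(\M_{0})$ with a witness $\widetilde{\rho}\in \Omega(\BLS(M))$ satisfying $(\pi_{\top}')^{*}\omega_{0}=(\iota'_{\top})^{*}\widetilde{\rho}$. Using the relation $\pi_{\top}'=\pi_{\top}\circ \beta_{\top}$ read off from diagram \eqref{diagram}, this becomes $\beta_{\top}^{*}(\pi_{\top}^{*}\omega_{0})=(\iota'_{\top})^{*}\widetilde{\rho}$, which is exactly the defining condition for $\pi_{\top}^{*}\omega_{0}$ to belong to $\widetilde{\Omega}^{*}(\J^{-1}(0))$ with the same witness $\widetilde{\rho}$; basicness is automatic since $\pi_{\top}^{*}$ lands in basic forms. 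The argument runs in reverse to obtain surjectivity: given a basic $\omega \in \widetilde{\Omega}^{*}(\J^{-1}(0))_{\mathrm{bas}\,\mathfrak{g}}$, the preliminary fact yields a unique $\omega_{0}\in \Omega(\M_{0}^{\top})$ with $\pi_{\top}^{*}\omega_{0}=\omega$, and any witness $\eta\in\Omega(\BLS(M))$ for $\omega$ obeys
\[
(\iota'_{\top})^{*}\eta=\beta_{\top}^{*}\omega=\beta_{\top}^{*}\pi_{\top}^{*}\omega_{0}=(\pi_{\top}')^{*}\omega_{0},
\]
proving $\omega_{0}\in \widetilde{\Omega}^{*}(\M_{0})$. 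Injectivity is inherited from that of $\pi_{\top}^{*}$ on the ambient spaces, and multiplicativity follows immediately from the compatibility of smooth pullback with the wedge product.

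There is no real obstacle: the proof is a routine unwinding of the defining conditions along the commutative diagram \eqref{diagram}. The only point worth stating explicitly is the identity $\beta_{\top}^{*}\circ\pi_{\top}^{*}=(\pi_{\top}')^{*}$, which is exactly what allows the two extension conditions, one for ordinary forms on $M$ and one for resolution forms on $\BLS(M)$, to translate into each other once the forms on $\J^{-1}(0)^{\top}$ are restricted to the basic subcomplex.
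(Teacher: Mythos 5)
Your proof is correct and follows essentially the same route as the paper: both invoke the freeness of the $S^1$-action on $\J^{-1}(0)^{\top}$ to get that $\pi_{\top}^{*}$ is an isomorphism onto the basic subcomplex of $\Omega(\J^{-1}(0)^{\top})$, and then check by unwinding the definitions via $(\pi_{\top}')^{*}=\beta_{\top}^{*}\circ\pi_{\top}^{*}$ that this isomorphism restricts to the two subcomplexes in question. Your write-up is merely a more detailed version of the paper's argument.
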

\begin{proof}
Since the ${S^1}$-action is free on $\J^{-1}(0)^{\top}$, the pull-back 
\[
\pi_{\top}^{*} \colon \Omega(\M_{0}^{\top}) \rightarrow \Omega^{*}(\J^{-1}(0)^{\top})_{\mathrm{bas} \, \mathfrak{g}}
\]
is an isomorphism by \cite[Proposition 2.5]{goertscheszoller}. Now, let $\omega \in \widetilde{\Omega}^{*}(\M_{0})$. Then there is $\widetilde{\eta} \in \Omega^{*}(\BLS(M))$ such that $\beta_{\top}^{*}\pi_{\top}^{*}\omega=(\iota'_{\top})^{*}\widetilde{\eta}$. Thus, $\pi_{\top}^{*}\omega$ is an element of $\widetilde{\Omega}^{*}\left(\J^{-1}(0)\right)_{\mathrm{bas} \, \mathfrak{g}}$. Analogously $(\pi_{\top}^{*})^{-1}\theta \in \widetilde{\Omega}^{*}(\M_{0})$ for any $\theta \in \Omega^{*}(\J^{-1}(0)))_{\mathrm{bas} \, \mathfrak{g}}$.
\end{proof}

Now we are ready to prove Theorem \ref{surjectivity}:

\begin{proof}[Proof of Theorem \ref{surjectivity}]
Consider the commutative diagram
\begin{center}
\begin{tikzcd}
H^{\ast}_{{S^1}}(M;\R) \arrow[two heads]{r}{\iota^{*}} &H^{\ast}_{{S^1}}(J^{-1}(0);\R) \arrow{r}{\beta^{*}} &H^{\ast}_{{S^1}}(\widetilde{\J^{-1}(0)};\R) \arrow{d}{\Phi} \\
H^{\ast}_{{S^1}}(M) \arrow{u}{\Psi_{dR}^{{S^1}}}  \arrow{ddrr}{\mathcal{K}} \arrow{r}{\iota_{\top}^{*}} &H^{\ast}_{{S^1}}(J^{-1}(0)) \arrow{u}{\varphi} \arrow{r}{\mathrm{inc}} &H^{\ast}\left(C_{S^1}\big (\widetilde{\Omega}^{*}\left(\J^{-1}(0)\big )\right),d_{{S^1}}\right) \arrow{d}{\Car}\\
& &H^{\ast}\left(\widetilde{\Omega}^{*}\left(\J^{-1}(0)\right)_{\mathrm{bas} \, \mathfrak{g}},d\right) \arrow{d}{\left(\pi_{\top}^{*}\right)^{-1}}\\  
 &  &H^{\ast}(\widetilde{\Omega}^{*}(\M_{0}),d) 
\end{tikzcd}
\end{center}
where the maps $\iota$, $\iota'$, and $\beta$ form the commutative diagram
\begin{center}
\begin{tikzcd}
\BLS(M) \arrow{d}{\beta} &\widetilde{\J^{-1}(0)} \arrow{d}{\beta} \arrow{l}{\iota'} \\
M &J^{-1}(0) \arrow{l}{\iota}
\end{tikzcd}
\end{center}
Denote by denote by $\mathbb H\subset H^{*}(\widetilde{\Omega}^{*}(\M_{0}),d)$  the image of $H^{*}(\M_{0};\R)$  under the natural injection and let $[\rho] \in \mathbb H $ with $\pi_{\top}^{*}\rho=:\omega=\iota_{\top}^{*}\eta$ for some $\eta \in \Omega(M)$. Then $\mathrm{Car}([1\otimes \omega])=[\omega]$. Since $\iota_{\top}^{*}(1 \otimes \eta)=1 \otimes \omega$, we can regard
\[
[1 \otimes \omega] \in H^{\ast}_{{S^1}}(J^{-1}(0))
\]
and $\varphi([1 \otimes \omega]) \in H^{\ast}_{{S^1}}(J^{-1}(0);\R)$. By Kirwan's surjectivity theorem \cite[Theorem 8.1]{HK16} there is $[\widetilde{\eta}'] \in H^{\ast}_{{S^1}}(M;\R)$ with
\[
\iota^{*}[\widetilde{\eta}']=\varphi([1 \otimes \omega]).
\]
By commutativity and setting $[\eta']:=(\Psi_{dR}^{{S^1}})^{-1}[\widetilde{\eta}']$, we have 
\[
\varphi([1 \otimes \omega])=\iota^{*}\Psi_{dR}^{{S^1}}((\Psi_{dR}^{{S^1}})^{-1}[\widetilde{\eta}'])=\varphi([\iota_{\top}^{*}\eta']).
\]
But this implies that in $H^{\ast}\big(C_{S^1}\big (\widetilde{\Omega}^{*}\left(\J^{-1}(0)\big )\right),d_{{S^1}}\big)$ we have 
\begin{align*}
[1 \otimes \omega]&=\mathrm{inc}([1\otimes \omega])= \Phi(\beta^{*}(\varphi([1 \otimes \omega])))\\
&=\Phi(\beta^{*}(\varphi([\iota_{\top}^{*}\eta'])))=\mathrm{inc}(\iota_{\top}^{*}[\eta']).
\end{align*}
In total, we conclude
\[
\mathcal{K}([\eta'])=[\rho].
\]
\end{proof}

\subsection{Consequences and remarks} Les us collect some consequences of the surjectivity theorem proved in the previous subsection, followed by some remarks.
 %\subsubsection{Singular Kirwan surjectivity onto subspaces of the singular cohomology}
 Suppose that a subspace $\mathcal H\subset H^{*}(\M_{0};\R)$ is \emph{injectively} mapped into $H^{*}(\widetilde{\Omega}^{*}(\M_{0}),d)$ by the natural map $H^{*}(\M_{0};\R)\to  H^{*}(\widetilde{\Omega}^{*}(\M_{0}),d)$ and denote by $\mathbb H\subset H^{*}(\widetilde{\Omega}^{*}(\M_{0}),d)$  the image of $\mathcal H$. Then Theorem \ref{surjectivity} allows us to build from the resolution Kirwan map $\mathcal K$ a \emph{surjective} map  
\bq
H^{*}_{{S^1}}(M)\owns {\mathcal K}^{-1}(\mathbb H)\to \mathcal H\label{eq:surjmap}
\eq by identifying $\mathcal H$ with $\mathbb H$ and composing $\mathcal K$ with a linear projection $\pr_\mathbb{H}: H^{*}(\widetilde{\Omega}^{*}(\M_{0}),d)\to \mathbb{H}$. As an instance of this, we have the following

\begin{cor} \label{ex:even}Assume that the cohomology groups $H^{2k+1}(F)$ vanish for all connected components $F \subset M^{S^{1}}\cap \J^{-1}(0)$.  Then, $\mathcal{K}$ induces a surjective map 
\bq
\check \kappa: H^{\mathrm{ev}}_{{S^1}}(M)\to H^{\mathrm{ev}}(\M_{0};\R).\label{eq:surjmap2}
\eq
In the case that the odd cohomology of $\M_{0}$ vanishes, we obtain a surjective linear map
\begin{equation}
\check \kappa \colon H^{*}_{{S^1}}(M)\to H^{*}(\M_{0};\R).\label{eq:surjmap3}
\end{equation}
\end{cor}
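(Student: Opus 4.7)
The plan is to assemble Corollary~\ref{generalF} with Theorem~\ref{surjectivity}, using only the vector space structure of the cohomologies. First, since by hypothesis $H^{2k+1}(F)=0$ for every $F\in\F_{0}$ and every $k\in\N$, Corollary~\ref{generalF} applies in every even degree and guarantees that the natural map
\[
j\colon H^{\mathrm{ev}}(\M_{0};\R)\longrightarrow H^{\mathrm{ev}}(\widetilde{\Omega}^{*}(\M_{0}),d)\cong H^{\mathrm{ev}}(\widetilde{\M}_{0})
\]
is injective. Set $\mathbb{H}^{\mathrm{ev}}:=j\bigl(H^{\mathrm{ev}}(\M_{0};\R)\bigr)$, so that $j$ restricts to an isomorphism $H^{\mathrm{ev}}(\M_{0};\R)\xrightarrow{\,\sim\,}\mathbb{H}^{\mathrm{ev}}$.

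Next, I would observe that $\mathcal K$ is degree-preserving: each of the three maps in Definition~\ref{def:20.04.2022} preserves total degree (equivalently, via Remark~\ref{rem:15.07.2023}, $\mathcal K$ factors as a pullback along $\beta$ followed by the regular Kirwan map of the blow-up, both degree-preserving). Hence $\mathcal K$ restricts to a map $\mathcal K^{\mathrm{ev}}\colon H^{\mathrm{ev}}_{S^{1}}(M)\to H^{\mathrm{ev}}(\widetilde{\M}_{0})$. By Theorem~\ref{surjectivity}, the image of $\mathcal K$ contains the image of the natural map $H^{*}(\M_{0};\R)\to H^{*}(\widetilde{\Omega}^{*}(\M_{0}),d)$, and restricting this containment to even degrees yields $\mathbb{H}^{\mathrm{ev}}\subset\mathrm{Im}(\mathcal K^{\mathrm{ev}})$.

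Now I would produce the surjection. Since we are working with $\R$-vector spaces, the subspace $\mathbb{H}^{\mathrm{ev}}\subset H^{\mathrm{ev}}(\widetilde{\M}_{0})$ admits a linear complement, so we may fix a linear projection $p\colon H^{\mathrm{ev}}(\widetilde{\M}_{0})\to \mathbb{H}^{\mathrm{ev}}$ that is the identity on $\mathbb{H}^{\mathrm{ev}}$. Define
\[
\check{\kappa}:=j^{-1}\circ p\circ\mathcal K^{\mathrm{ev}}\colon H^{\mathrm{ev}}_{S^{1}}(M)\longrightarrow H^{\mathrm{ev}}(\M_{0};\R).
\]
Surjectivity then comes for free: given $\alpha\in H^{\mathrm{ev}}(\M_{0};\R)$, the containment $\mathbb{H}^{\mathrm{ev}}\subset\mathrm{Im}(\mathcal K^{\mathrm{ev}})$ provides $\beta\in H^{\mathrm{ev}}_{S^{1}}(M)$ with $\mathcal K^{\mathrm{ev}}(\beta)=j(\alpha)\in \mathbb{H}^{\mathrm{ev}}$, and since $p$ is the identity on $\mathbb{H}^{\mathrm{ev}}$, one gets $\check\kappa(\beta)=j^{-1}(j(\alpha))=\alpha$. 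For the last assertion, if $H^{\mathrm{odd}}(\M_{0};\R)=0$, extend $\check{\kappa}$ by the zero map in odd degrees; the resulting map $\check\kappa\colon H^{*}_{S^{1}}(M)\to H^{*}(\M_{0};\R)$ is surjective trivially in odd degrees and by the argument above in even degrees.

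No step here is a genuine obstacle; the statement is essentially a packaging lemma. The only point that requires care is that the projection $p$ and the inverse $j^{-1}$ are merely linear (not algebra or natural) maps, so $\check{\kappa}$ is a surjection of $\R$-vector spaces, which is exactly what is claimed.
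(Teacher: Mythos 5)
Your proposal is correct and follows essentially the same route as the paper: inject $H^{\mathrm{ev}}(\M_{0};\R)$ into $H^{\mathrm{ev}}(\widetilde{\M}_{0})$ via Corollary \ref{generalF}, note that $\mathcal K$ is degree-preserving and its image contains that copy by Theorem \ref{surjectivity}, and compose with a linear projection onto the image. The extra care you take in spelling out the projection, the inverse of the injection, and the extension by zero in odd degrees merely makes explicit what the paper's terser proof leaves implicit.
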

\begin{proof}
Identify the  cohomology in even degrees  $H^{\mathrm{ev}}(\M_{0};\R)$, which by Corollary \ref{generalF}  is injected into $H^{*}(\widetilde{\Omega}^{*}(\M_{0}),d)$ by the natural map, with its image $\mathbb{H}\subset H^{*}(\widetilde{\Omega}^{*}(\M_{0}),d)$. Choose further a linear projection $\pr_{\mathbb{H}}: H^{*}(\widetilde{\Omega}^{*}(\M_{0}),d)\to \mathbb{H}$. Since $\mathcal{K}$ is degree-preserving,  its composition with $\pr_\mathbb{H}$ restricts to a surjective map by Theorem \ref{surjectivity}. 
\end{proof}

\begin{rem}\label{rem:18.08.2023}
The resolution Kirwan map $\mathcal{K} \colon H^{*}_{{S^1}}(M) \rightarrow H^{*}(\widetilde{\M_{0}})$ can not be surjective. Indeed, as noted in Remark \ref{rem:15.07.2023},  it can be seen as the composition of the map induced by the blow-down $\beta$ composed with the regular Kirwan map of the blown-up Hamiltonian action. Now, by \cite[Proposition 2.4]{mcduff1987} the cohomology of the blow-up $\BLS(M)$ is generated by cohomology classes pulled back from $M$ and the symplectic class of each exceptional bundle. These exceptional symplectic classes are not in the image of $\beta^{*}$. In particular, the symplectic class of $\widetilde{\M_{0}}$ is not in the image of $\mathcal{K}$.
\end{rem}

\begin{rem}
As already explained, in the case that the odd cohomology of all $F \in \mathcal{F}_{0}$ and the odd cohomology of $\M_{0}$ vanishes, we have a linear surjection (\ref{eq:surjmap3}). It would be interesting to know if $\check \kappa$ is in fact a ring homomorphism, which is not clear from our construction{. For example, in the case  of isolated fixed points and $H^{\mathrm{odd}}(\M_{0})=0$, the isomorphism
\[
H^{*}(\widetilde{\M}_{0})\cong H^{*}\left(\M_{0}\right) \oplus  \bigoplus\limits_{{F \in \mathcal{F}_{0}}} {\frac{\mathbb{R}[\widetilde{\sigma}_{0}|_{\widetilde F},\widetilde{\Xi}|_{\widetilde F}]_{\geq 1}}{I_F}}
\]
from Corollary \ref{splitting} is} linear but not necessarily multiplicative. %This is a common drawback of a resolution Kirwan map as becomes clear from the approaches mentioned in Remark (\ref{noncanonicity}).
\end{rem}

\section{Intersection cohomology and its relation to  singular cohomology}\label{sec:inters} 

In this section, we would like to discuss our results in the light  of  the  intersection cohomology of  $\M_0$, regarded as  a {simple stratified space}. Let us therefore briefly recall this cohomology theory following   the presentation of Lerman-Tolman in \cite[Section 2]{lerman-tolman00}. Consider a general  \emph{simple stratified space} ${\bf X}$, that is a topological Hausdorff space 
 \bqn
 {\bf X}= {\bf X}_\mathrm{top} \sqcup \bigsqcup_i {\bf Y}_i
 \eqn 
 consisting of  a disjoint union of even-dimensional orbifolds called \emph{strata}. Among these strata there exists an  open and dense stratum ${\bf X}_\mathrm{top}$ called the \emph{top stratum}. The remaining strata ${\bf Y}_i$ are connected and called the \emph{singular strata}.  Each singular stratum ${\bf Y}_i$ has a neighborhood ${\bf T}_i\subset {\bf X}$ and a retraction map $r_i:{\bf T}_i \rightarrow {\bf Y}_i$ that is a $C^0$ fiber bundle with fiber given by an open cone over a suitable orbifold ${\bf L}_i$, such that  $r_i:{\bf T}_i\setminus {\bf Y}_i \rightarrow {\bf Y}_i$ is a $C^\infty$ fiber bundle of orbifolds with fiber ${\bf L}_i \times (0,1)$.   Next, let $r:{\bf E} \rightarrow {\bf B}$ be a smooth submersion of orbifolds.  One then defines the \emph{Cartan filtration} 
 \bqn 
 \Fbb_0\Omega^\ast({\bf E}) \subset \Fbb_1\Omega^\ast({\bf E}) \subset \Fbb_2\Omega^\ast({\bf E}) \subset \dots...
 \eqn
  of the complex $\Omega^\ast({\bf E})$ of differential forms on ${\bf E}$ by setting
     \begin{align*} 
 \Fbb_k\Omega^\ast({\bf E}):=&\Big \{ \omega \in \Omega^\ast({\bf E})  \mid i_{v_0} i_{v_1} \dots i_{v_k} \omega_e=0, \, i_{v_0} i_{v_1} \dots i_{v_k}  d\omega_e=0  \\ &\text{ for all } e \in E \text{ and } v_0,\dots,v_k \in \ker dr_e\Big \}, 
 \end{align*}
where by convention one sets $i_{v_0} i_{v_1} \dots i_{v_k}  \omega=0$ if $\deg \omega \leq k$. Note that $ \Fbb_0\Omega^\ast({\bf E})$ consists of basic forms, $ \Fbb_1\Omega^\ast({\bf E})$ of forms which are basic after one contraction, and so forth. Now, consider a simple stratified space $({\bf X}, r_i, {\bf T}_i, {\bf Y}_i)$ as above and let $\bar p:\mklm{{\bf Y}_i}\rightarrow \N$ be a function called  \emph{perversity} in this context. Then the  complex of \emph{intersection differential forms of perversity $\bar p$}  is the subcomplex
\begin{multline*}
I\Omega^\ast _{\bar p}({\bf X}):=\Big \{ \omega \in \Omega^\ast({\bf X}_\mathrm{top}) \mid \omega|_{{\bf U}_i} \in \Fbb_{\bar p({\bf Y}_i)} \Omega^\ast ({\bf U}_i\cap {\bf X}_\mathrm{top}) \text{  for some neighborhood }\\
 {\bf U}_i \subset {\bf T}_i \text{  of ${\bf Y}_i$ for all } i \Big \},
\end{multline*}
where the filtration is defined with respect to the submersions $r_{i|{\bf U}_i}:{\bf U}_i \setminus {\bf Y}_i \rightarrow {\bf Y}_i$, and the differential is the usual exterior {derivative} $d$. The \emph{intersection cohomology $IH^\ast _{\bar p}({\bf X})$ with perversity $\bar p$} of the stratified space ${\bf X}$ is the cohomology of the complex $(I\Omega^\ast _{\bar p}({\bf X}),d)$. 
{ Similarly, we introduce the subcomplex 
\begin{align*}
I\Omega^\ast _{\bar p}({\bf X})_c:=\Big \{ \omega \in I\Omega^\ast _{\bar p}({\bf X}) \mid \supp \omega \subset {\bf Z} \text{ for some compact set } {\bf Z}\subset {\bf X}  \Big \},
\end{align*}
of \emph{intersection differential forms of perversity $\bar p$ and compact support} and its cohomology $IH^\ast _{\bar p}({\bf X})_c$. Now, note  that if $q > \dim {\bf Y}_i +\bar p({\bf Y}_i)$, every $\omega \in I\Omega^q _{\bar p}({\bf X})$ vanishes near ${\bf Y}_i$. In particular, if $\dim {\bf X}_\mathrm{top} > \dim {\bf Y}_i +\bar p({\bf Y}_i)$ for all singular strata, every $\omega \in I\Omega^{\dim {\bf X}_\mathrm{top}}_{\bar p}({\bf X})_c$ has compact support in ${\bf X}_\mathrm{top}$.  Consequently, one can integrate such a form over the top stratum, since the latter is oriented. If even  $\dim {\bf X}_\mathrm{top}-1 > \dim {\bf Y}_i +\bar p({\bf Y}_i)$ for all $i$, one obtains a well-defined map 
\bqn 
\int: IH^{\dim {\bf X}_\mathrm{top}}_{\bar p}({\bf X})_c \longrightarrow \R,
\eqn
which is extended by zero to $IH^\ast_{\bar p}({\bf X})_c$. Now, {the \emph{middle perversity} is defined as $\bar m({\bf Y}_i):=(\dim {\bf X}_\mathrm{top} - \dim {\bf Y}_i)/2 -1$. G}iven $\alpha, \beta \in  IH^\ast_{\bar m}({\bf X})$ one has $\alpha \wedge \beta \in IH^\ast_{2\bar m}({\bf X})$. Since by definition of the middle perversity $\dim {\bf X}_\mathrm{top} -1 > \dim {\bf Y}_i +2 \bar m({\bf Y}_i)$ for all $i$, there is a well-defined pairing 
\bqn 
\eklm{\cdot,\cdot}:IH^p_{\bar m}({\bf X})\otimes  IH^q_{\bar m}({\bf X})_c \longrightarrow \R, \qquad [\alpha] \otimes [\beta] \longmapsto \int_{{\bf X}_\mathrm{top}} \alpha \wedge \beta,
\eqn
which is non-degenerate by the usual Poincar\'e duality for oriented manifolds. }%\\

Next, let $\widetilde {\bf X}$ be a resolution of a simple stratified space $\bf X$. 
In \cite[Definition 5.2]{lerman-tolman00}, intersection resolution forms of middle perversity $\bar m$ were introduced as intersection differential forms on ${\bf X}_\mathrm{top}$ that extend to a globally defined differential form on $\widetilde {\bf X}$. As it turns out, the cohomology of the complex of intersection resolution forms is isomorphic to $IH^\ast_{\bar m}({\bf X})$.  

In our context, let us assume that $S^1$ acts semi-freely on the zero level of the momentum map, that is, $S^1$ acts freely on $\J^{-1}(0)\setminus (\J^{-1}(0) \cap M^{S^1})$, so that our orbit type stratification coincides with the infinitesimal orbit type stratification of Lerman-Tolman. As a direct consequence of Corollary \ref{cor:rauisch}  we have
\begin{cor}
The  complex $\widetilde{\Omega}(\M_{0})$ naturally extends the complex of intersection resolution forms on $\M_0$. 
\end{cor}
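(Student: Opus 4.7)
My plan is to unwind the definitions and appeal directly to the second characterization of $\widetilde{\Omega}(\M_{0})$ provided by Corollary~\ref{cor:rauisch}, namely
\[
\widetilde{\Omega}(\M_{0}) = \bigl\{\omega_{0}\in\Omega(\M_{0}^{\top})\mid \exists\,\rho'\in\Omega(\widetilde{\M}_{0}):\, (\beta_{0}^{\top})^{*}\omega_{0}=(\iota_{0}^{\top})^{*}\rho'\bigr\}.
\]
This identifies $\widetilde{\Omega}(\M_{0})$ precisely with the space of forms on the top stratum $\M_{0}^{\top}$ which arise as pullbacks of globally defined forms on the partial desingularization $\widetilde{\M}_{0}$ via the diffeomorphism $\beta_{0}^{\top}\colon \widetilde{\M}_{0}^{\top}\to\M_{0}^{\top}$.

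Next I will recall from \cite[Definition 5.2]{lerman-tolman00} (adapted to our resolution $\beta_{0}\colon \widetilde{\M}_{0}\to\M_{0}$, which is indeed a resolution of the simple stratified space $\M_{0}$ in their sense, by the material in Section~\ref{sec:inters}) that an intersection resolution form of middle perversity on $\M_{0}$ is by definition an element $\omega\in I\Omega^{\ast}_{\bar m}(\M_{0})\subset\Omega(\M_{0}^{\top})$ which admits a global extension to a differential form on $\widetilde{\M}_{0}$, that is, it satisfies both the Cartan filtration condition for middle perversity near every singular stratum \emph{and} the extendability condition appearing in the characterization of $\widetilde{\Omega}(\M_{0})$ above.

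Putting these two observations together, every intersection resolution form automatically satisfies the defining condition of $\widetilde{\Omega}(\M_{0})$, so the complex of intersection resolution forms sits inside $\widetilde{\Omega}(\M_{0})$ set-theoretically. Since both carry the exterior derivative inherited from $\Omega(\M_{0}^{\top})$, this inclusion is a cochain map, which establishes the claim. There is no real obstacle here; the substance has already been absorbed into the proof of Corollary~\ref{cor:rauisch} and into the identification of $\widetilde{\M}_{0}$ as a resolution of the simple stratified space $\M_{0}$.

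As a sanity check I would note that the inclusion is in general strict, so that ``extends'' is indeed the right word: for example, the powers $\sigma_{\top}^{k}$ of the top-stratum symplectic form lie in $\widetilde{\Omega}(\M_{0})$ via Proposition~\ref{resolutionforms} and Sjamaar's model, yet fail the Cartan filtration condition at the maximally singular stratum and hence do not represent intersection cohomology classes of middle perversity, consistently with the discussion in the introduction.
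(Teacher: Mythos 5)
Your proposal is correct and follows essentially the same route as the paper: the paper states this corollary as a direct consequence of Corollary~\ref{cor:rauisch}, observing that $\widetilde{\Omega}(\M_{0})$ consists exactly of the forms on $\M_{0}^{\top}$ extending to $\widetilde{\M}_{0}$, so that the intersection resolution forms of \cite[Definition 5.2]{lerman-tolman00} (with the partial desingularization taken as the resolution) are obtained by imposing the additional Cartan filtration condition. Your sanity check on strictness via $\sigma_{\top}^{k}$ matches the discussion in the introduction and is a nice addition, though not needed for the claim.
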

 In fact, our definition drops the intersection condition using the partial desingularization as a resolution. In the context of  nonsingular, connected, complex, projective varieties and GIT quotients  $IH^{*}_{\bar m}(\M_{0})$ can even be identified as a canonical summand in $ H^{*}(\widetilde{\M_{0}})$ by using algebro-geometric arguments involving the decomposition theorem of Beilinson--Bernstein--Deligne and the Hard Lefschetz theorem, see \cite[pp. 200 and 234]{JKKW03}, but in general such a canonical identification is not clear. In any case, in view of Corollary \ref{splitting} our approach opens up the possibility  to relate $H^\ast(\M_0;\R)$ to   $IH^\ast (\M_0)$ via maps to $H^\ast(\widetilde \M_0)$. Notice further that in \cite{kiem-woolf2005},  Kiem and Woolf identified the intersection cohomology $IH^{*}_{\bar m} (\M_{0})$ with respect to the middle perversity with a subspace of $H^{*}_{G}(\J^{-1}(0))$. Namely, they found an isomorphism
\[
IH^{*}(\M_{0}) \simeq \left\{ \eta \in H^{*}_{G}(\J^{-1}(0)) \mid \eta|_{F} \in H^{*}(F)\otimes \mathbb{R}[x]_{\leq 2d-2}  \, \forall F \subset \J^{-1}(0)  \right\},
\]
where $H^{*}(F)\otimes \mathbb{R}[x]_{\leq 2d_{i}-2} \subset  H^{*}(F)\otimes \mathbb{R}[x] \simeq H^{*}_{G}(F)$ and $d:= \min\{l_{F}^{+},l_{F}^{-}\}$, which identifies the intersection pairing in $IH^{*}(\M_{0})$ with the cup product in $H^{*}_{G}(\J^{-1}(0))$. In the same spirit, Remark \ref{singularinresolution} provides a description of singular cohomology inside resolution cohomology as it characterizes singular classes as those classes in resolution cohomology whose restriction to the exceptional fibers vanish in cohomology.

Finally, if we compare the resolution Kirwan map introduced in Defintion \ref{def:20.04.2022} with the intersection Kirwan map $\kappa_{IH}$ from \eqref{eq:18.08.2023}, it is clear that they differ  by the projection onto $IH^\ast(\M_0)$.  Furthermore, the fact that  the maps \eqref{eq:surjmap}, \eqref{eq:surjmap2} are  non-canonical, compare Remark \ref{canonicity}, is in congruence with   non-canonicity phenomena occurring in other approaches to a resolution Kirwan map, see \cite[Theorem 1 and Theorem 6]{kiem-woolf2006}, \cite[Corollary 3.5]{woolf2003} and \cite[p. 234]{JKKW03}. Only in the GIT case of \cite{JKKW03} already mentioned above  there is a natural choice for the projection $H^{*}(\widetilde{\M}_{0})\simeq IH_{\bar m}^{*}(\widetilde{\M}_{0}) \rightarrow IH_{\bar m}^{*}(\M_{0})$ due to the Hard Lefschetz Theorem.

\section{Abelian Polygon Spaces}\label{sec:17.08.2023}

To close this paper, let us discuss one interesting family of examples where our theory applies, namely abelian polygon spaces. They  have been of interest in symplectic and algebraic geometry as well as in combinatorics for a long time, see \cite{hausmann-knutson1998} and the abundant references therein. They play a role in studying the moduli space of $n$-times punctured Riemann spheres as well as the moduli space of $n$ unordered weighted points in $\CP^{1}$;  in differential geometry, polygon spaces spark interest as they are connected to the moduli space of flat connections on Riemann-surfaces, see \cite{agapito-godinho2009}. 

Generalizing Example \ref{2spheres}, we consider first a product of $2$-spheres 
\[
M:=\prod\limits_{i=1}^{n} S^{2}_{r_{i}} \subset \left(\mathbb{R}^{3}\right)^{n}
\]
of radii $r_i>0$ endowed with the product symplectic form, which it inherits from the symplectic forms $\sigma \in \Omega^{2}(S^{2}_{r_i})$ and let $S^{1}$ act on this product diagonally, where the circle acts on each $2$-sphere by standard rotation around the $z$-axis. This action is Hamiltonian and a momentum map is given by 
\begin{align*}
\J \colon \prod\limits_{i=1}^{n} S^{2}_{r_i} &\longrightarrow \mathbb{R}, \qquad (x_{i},y_{i},z_{i})_{i} \longmapsto \sum\limits_{i=1}^{n}z_{i}.
\end{align*}
In case $(r_{1},\ldots,r_{n})=(1,\ldots,1)$ the regularity of $0 \in \mathbb{R}$ depends on the parity of $n$ and $0$ is a regular value of the momentum map if and only if $n$ is odd. So for our purposes we will assume that $n$ is even and deal with the stratified symplectic quotient
\[
\M_{0}:=\frac{\J^{-1}(0)}{S^{1}}=:M/\!\!/_{0}{S^1}.
\]
The singular stratum of this space consists of isolated points which are induced by the fixed point set 
\[
\J^{-1}(0)\cap \left( \prod\limits_{i=1}^{n} S^{2}_{r_i} \right)^{S^{1}} = \left\{((0,0,s_1 r_{1}),\ldots,(0,0,s_n r_{n})) \in \J^{-1}(0)\,|\, s_j\in \{1,-1\}\right\}.
\] 
Applying our results in this setting, in particular Corollary \ref{ex:even},  then shows  that there is a surjective map
\[
 H^{\mathrm{ev}}_{S^{1}}\left(\prod\limits_{i=1}^{n} S^{2}_{r_i} \right) \longrightarrow H^{\mathrm{ev}}(\M_{0};\mathbb{R}).
\]
Because the fixed point set $\left( \prod\limits_{i=1}^{n} S^{2}_{r_i} \right)^{S^{1}}$ consists of isolated points, our results will also hold for general values $C \in \mathbb{R}$ of the momentum map, since we can shift the momentum map by any additive constant, $S^{1}$ being abelian, and we denote the symplectic quotient at the value $C$ by $M /\!\!/_{C} {S^1}:=\frac{\J^{-1}(C)}{{S^1}}$. Relying on this fact we shall now apply our results to abelian polygon spaces. To introduce them, let us first introduce polygon spaces.

\begin{definition}
The polygon space $\mathrm{Pol}(r_{1},\ldots,r_{n})$ is defined as the family of equivalence classes of piecewise linear paths in $\mathbb{R}^{3}$ consisting of $n$ linear segments, the $i$-th of which is of length $r_i$, such that the $n$-th segment ends where the $1$st begins, and two paths are considered equivalent if they are congruent, that is, equal up to a finite number of rotations and translations. In other words:
\[
\mathrm{Pol}(r_{1},\ldots,r_{n}):=\left\{(x_{1},\ldots,x_{n}) \in \prod\limits_{i=1}^{n} S^{2}_{r_i} \Big| \sum\limits_{i=1}^{n}x_{i}=0 \right\} \Big/ \SO(3),
\] 
where $\SO(3)$ acts diagonally by the restriction of the standard action on $\R^3$.
\end{definition}

These spaces have a natural symplectic structure as they arise as symplectic reductions of $\prod\limits_{i=1}^{n} S^{2}_{r_i}$. This is because
\begin{align*}
\prod\limits_{i=1}^{n} S^{2}_{r_i} \longrightarrow \mathbb{R}^{3}, \qquad (x_{1},\ldots,x_{n})& \longmapsto  \sum\limits_{i=1}^{n}x_{i}\in \R^3\cong \mathrm{so}(3)^\ast
\end{align*}
is a momentum map of the $\SO(3)$-action and 
\[
\mathrm{Pol}(r_{1},\ldots,r_{n})=\left(\prod\limits_{i=1}^{n} S^{2}_{r_i} \right) \Big/ \!\! \Big/_{0}\, \SO(3).
\]
Consequently,  symplectic geometry is a powerful tool to study polygon spaces. One important tool to determine the cohomology ring of smooth polygon spaces in \cite{hausmann-knutson1998} are abelian polygon spaces, which we now introduce.

\begin{definition}
The abelian polygon space $\mathrm{APol}(r_{1},\ldots,r_{n})$ is defined as 
\[
\mathrm{APol}(r_{1},\ldots,r_{n})=\left(\prod\limits_{i=1}^{n-1} S^{2}_{r_i} \right) \Big/\!\! \Big/_{r_{n}} S^{1}.
\]
\end{definition}

While the homology of singular polygon spaces has been studied in \cite{kamiyama1998}, where it was shown that the homology groups of singular polygon spaces do not fulfill Poincar\'e-duality \cite[Remark 1.7]{kamiyama1998}, the case of singular abelian polygon spaces seems to be uncovered in  literature. But it is not difficult to see that they carry serious singularities as well. Indeed, computing the local homology of a singular point $p$, which is induced by a fixed point consisting of $\ell^{+}$ north poles $N$ of the form $N=(0,0,r_{i})$ and $\ell^{-}$ south poles $S$ of the form $S=(0,0,-r_{j})$, reveals that 
\begin{align*}
&H_{*}(\mathrm{APol}(r_{1},\ldots,r_{n}),\mathrm{APol}(r_{1},\ldots,r_{n})\setminus \{p\})\\
&\qquad\qquad =H_{*}\Big (C(S^{2\ell^{+}-1}\times_{S^{1}}S^{2\ell^{-}-1}),\mathbb{R}\times \frac{S^{2\ell^{+}-1}\times S^{2\ell^{-}-1}}{S^{1}}\Big )\\
&\qquad\qquad\simeq \widetilde{H}_{*}\Big (\mathbb{R}\times \frac{S^{2\ell^{+}-1}\times S^{2\ell^{-}-1}}{S^{1}}\Big ),
\end{align*}
where we made use of excision and the local normal form theorem  as in \cite[p. 231]{hatcher02}. Here  $\widetilde{H}_{*}$ stands for the reduced singular homology with real coefficients, $C(X)$ denotes the cone construction, which associates to a topological space $X$ the cone $(X \times [0,1))/\sim$,  where $(x,t) \sim (y,s)$ iff $t=s=0$,  and the local normal form theorem was applied in the same way as in \cite[Proposition 3.3]{lerman-tolman00}. Consequently, abelian polygon spaces are neither homology manifolds nor orbifolds. 

In contrast to the regular situation, polygon spaces  need not be even cohomology spaces, that is, there exists a $k$ such that $H^{2k-1}(\mathrm{Pol}(r_{1},\ldots,r_{n}))\neq 0$, as was pointed out in \cite[Remark 1.7]{kamiyama1998}. Furthermore,  the cohomology ring $H^{*}(\mathrm{Pol}(r_{1},\ldots,r_{n}))$ remains unknown in general. Nevertheless, as a consequence of our results applied to abelian polygon spaces one obtains

\begin{thm}\label{thm:27.11.2023}
There is a linear surjection
 \[
 \check \kappa : H^{2k}_{S^{1}}\left( \prod\limits_{i=1}^{n-1} S^{2}_{r_i} \right) \longrightarrow H^{2k}(\mathrm{APol}(r_{1},\ldots,r_{n})).
 \]
\end{thm}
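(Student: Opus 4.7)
The plan is to recognize the abelian polygon space as a symplectic quotient of the form treated by Corollary~\ref{ex:even}, and to apply that corollary essentially verbatim. First, set $M:=\prod_{i=1}^{n-1} S^{2}_{r_{i}}$ with the product symplectic form, and equip it with the diagonal $S^{1}$-action by rotation about the $z$-axis. A momentum map is $\J(x_{1},\ldots,x_{n-1})=\sum_{i=1}^{n-1} z_{i}$, and shifting it by the constant $r_{n}$ gives a new momentum map $\J':=\J-r_{n}$ for the same Hamiltonian action, for which $\mathrm{APol}(r_{1},\ldots,r_{n})=(\J')^{-1}(0)/S^{1}$. Thus the abelian polygon space is a stratified symplectic quotient of the form $M/\!\!/_{0}S^{1}$ to which the theory developed in the previous sections applies directly.

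Next, I would identify the fixed point components inside the relevant zero-level set. The $S^{1}$-fixed points in $M$ are the tuples $(x_{1},\ldots,x_{n-1})$ with each $x_{i}\in\{(0,0,r_{i}),(0,0,-r_{i})\}$, so $M^{S^{1}}$ is a finite discrete set. Consequently every connected component $F\subset M^{S^{1}}\cap (\J')^{-1}(0)$ is a single point, and in particular $H^{2k+1}(F)=0$ for all $k$. This is exactly the hypothesis of Corollary~\ref{ex:even}.

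Finally, I would invoke Corollary~\ref{ex:even} for $(M,\J')$ to produce the surjective linear map
\[
\check\kappa : H^{\mathrm{ev}}_{S^{1}}(M)\longrightarrow H^{\mathrm{ev}}(M/\!\!/_{0}S^{1};\R)=H^{\mathrm{ev}}(\mathrm{APol}(r_{1},\ldots,r_{n});\R).
\]
Since $\check\kappa$ is degree-preserving (being built from the degree-preserving resolution Kirwan map composed with a linear projection onto the image of $H^{\mathrm{ev}}(\mathcal{M}_{0};\R)$ in resolution cohomology, cf.\ the proof of Corollary~\ref{ex:even}), restricting to degree $2k$ yields the desired surjection.

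There is essentially no genuine obstacle: the construction of Corollary~\ref{ex:even} packages all the hard work, and the only points to verify are (i) the identification of $\mathrm{APol}$ as a symplectic quotient at a possibly irregular value of $\J'$, and (ii) that the relevant fixed point components are isolated, which is obvious from the explicit description of $M^{S^{1}}$. The only mildly subtle aspect to flag is that one should not expect $\check\kappa$ to be a ring map and that there is no odd-degree analogue in general, in accordance with Corollary~\ref{cor:15.11.2023} and the discussion following Corollary~\ref{ex:even}.
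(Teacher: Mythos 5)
Your proposal is correct and follows the same route as the paper: the paper likewise shifts the momentum map by the constant $r_{n}$ (using that $S^{1}$ is abelian), observes that the fixed point set of the diagonal action on $\prod_{i=1}^{n-1}S^{2}_{r_{i}}$ consists of isolated points, and then cites Corollary~\ref{ex:even} directly. Your write-up just makes these steps slightly more explicit.
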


\begin{proof} This follows directly from Corollary \ref{ex:even}.
\end{proof}

We conclude by exploring one example, related to the ones above, which shows that  the singular cohomology of a singular symplectic quotient by a circle action might not vanish in odd degrees.

\begin{example}\label{4spheres}
Consider as $M$ the product of four $2$-spheres of radius $1$ with momentum map
\begin{align*}
\J \colon S^{2} \times S^{2} \times S^{2} \times S^{2} &\longrightarrow \mathbb{R}, \qquad (x_{i},y_{i},z_{i})_{1 \leq i \leq 4} \longmapsto z_{1}+z_{2}+z_{3}+z_{4}.
\end{align*} 
The image of this map is the closed interval $[-4,4]$ and the critical values inside this interval are $\{-4,-2,0,2,4\}$ as illustrated in the diagram

\begin{center}
\begin{tikzpicture}
\draw (-4,0)--(4,0);

\filldraw[black] (-4,0) circle (0.1cm);
\filldraw[black] (-2,0) circle (0.1cm);
\filldraw[black] (0,0) circle (0.1cm);
\filldraw[black] (2,0) circle (0.1cm);
\filldraw[black] (4,0) circle (0.1cm);

\node[] at (-4,-0.5) {$-4$};
\node[] at (-2,-0.5) {$-2$};
\node[] at (0,-0.5) {$0$};
\node[] at (2,-0.5) {$2$};
\node[] at (4,-0.5) {$4$};

\end{tikzpicture}
\end{center}

 \noindent If we intersect the level sets of these critical values with the fixed point set of the action, we obtain with the notation $N:=(0,0,1),S:=(0,0,-1) \in S^{2}$ the equalities 
 \begin{align*}
\J^{-1}(4) \cap M^{S^{1}}&= \{(N,N,N,N)\} , \; \#\left( \J^{-1}(4) \cap M^{S^{1}} \right)=1,\\
\J^{-1}(2) \cap M^{S^{1}}&= \{x \in \{N,S\}^{4} \mid \text{exactly three entries of $x$ are $N$}\} , \; \#\left( \J^{-1}(2) \cap M^{S^{1}} \right)=4,\\
\J^{-1}(0) \cap M^{S^{1}}&= \{x \in \{N,S\}^{4} \mid \text{exactly two entries of $x$ are $N$}\} , \; \#\left( \J^{-1}(0) \cap M^{S^{1}} \right)=6,\\
\J^{-1}(-2) \cap M^{S^{1}}&=\{x \in \{N,S\}^{4} \mid \text{exactly one entry of $x$ is $N$}\} , \; \#\left( \J^{-1}(-2) \cap M^{S^{1}} \right)=4,\\
\J^{-1}(-4) \cap M^{S^{1}}&= \{(S,S,S,S)\}, \; \#\left( \J^{-1}(-4) \cap M^{S^{1}} \right)=1.
\end{align*}
We now look at the symplectic quotients $\M_{\varepsilon}$ where $\varepsilon \in [-4,4]$ and study them by invoking the local normal form theorem, the Duistermaat--Heckman theorem, and \cite[Section 2.3]{guillemin94}. As the $(-4)$-level only consists of the isolated fixed point $(S,S,S,S)$, the local normal form theorem  from Proposition \ref{prop:localnormform} implies that  
\[
\M_{\varepsilon} \cong \CP^{3} 
\]
for $\varepsilon>-4$ but close to $-4$, where the isomorphism is meant as a diffeomorphism. By the Duistermaat--Heckman theorem it follows that $\M_{\varepsilon} \cong \CP^{3}$   for all $\varepsilon \in (-4,-2)$. When we cross the critical value $-2$, as explained in \cite[pp.\ 35]{guillemin94}, we are facing four fixed points where the positive weight space in the isotropy representation in each of these fixed points is complex one-dimensional. Therefore $\M_{\varepsilon}$ is diffeomorphic to the blow-up of $\CP^{3}$ in four points when $\varepsilon \in (-2,0)$. In particular, the dimension of the second cohomology group of this space is $\dim H^{2}(\M_{\varepsilon})=5$ by \cite[Proposition 2.4]{mcduff1987}.  Now, when passing from $\varepsilon \in (-2,0)$ to $0$ we have to collapse six $\CP^{1}$s inside $\M_{\varepsilon}$ to points by \cite[p.\ 35]{guillemin94}. For this we consider the decompositions  $\M_{0}=U \cup V$ and $\M_{\varepsilon}=U' \cup V'$, where $U'$ is the union of tubular neighborhoods of the six $\CP^{1}$s, $V'$ is the slighty enlarged complement of $U'$ inside $\M_{\varepsilon}$ and $U$ and $V$ are the corresponding images under the quotient maps defined by collapsing the $\CP^{1}$s to points.  The Mayer-Vietoris sequences connected by the maps induced by collapsing then look like
\begin{center}
\begin{tikzcd}
\ldots \arrow{r} &H^{2}(\M_{0}) \arrow{r} \arrow{d} &H^{2}(U) \oplus H^{2}(V)  \arrow{d} \arrow{r} &H^{2}(U \cap V) \arrow{r} \arrow{d} &H^{3}(\M_{0}) \arrow{r} \arrow{d} &\ldots\\
\ldots \arrow{r} &H^{2}(\M_{\varepsilon}) \arrow{r} &H^{2}(U') \oplus H^{2}(V') \arrow{r} &H^{2}(U' \cap V') \arrow{r} &H^{3}(\M_{\varepsilon}) \arrow{r} &\ldots,
\end{tikzcd}
\end{center}
 In this case $H^{*}(V) \cong H^{*}(V')$, $H^{*}(U\cap V) \cong H^{*}(U' \cap V')$, $H^{*}(U')=\bigoplus H^{*}(\CP^{1})$ and by the long exact cohomology sequence of the pair $(U',\bigcup \CP^{1})$, see \cite[p. 200]{hatcher02}, we have $H^{k}(U)=0$ for $k>1$. Now,  by \cite[Proposition 2.4]{mcduff1987}) one has $H^{3}(\M_{\varepsilon})=0$ and the map
\[
A \colon H^{2}(U') \oplus H^{2}(V') \longrightarrow H^{2}(U' \cap V')
\]
is surjective by exactness. Moreover, as $\dim H^{2}(\M_{\varepsilon})=5$ the kernel of $A$ can be at most $5$-dimensional because it is the image $\Im(H^{2}(\M_{\varepsilon})\rightarrow H^{2}(U') \oplus H^{2}(V'))$. Since $\dim H^{2}(U')=6$, it follows that $A$ is not the zero map. More precisely, there is an element  $(a',0) \in H^{2}(U') \oplus H^{2}(V')$ that  maps to some non-zero $v' \in H^{2}(U' \cap V')$, and $a'$ is not in the image of the map $H^{2}(\M_{\varepsilon}) \rightarrow H^{2}(U')$. Consequently, there is a unique non-zero element $v \in H^{2}(U \cap V)$ which maps to $v'$ as the vertical arrow in the diagram above is an isomorphism.  

We now claim that the image of $v$ under the map $H^{2}(U \cap V) \rightarrow H^{3}(\M_{0})$ is not zero. Indeed, suppose that  it is zero. Then, by  exactness, {and since $H^{2}(U)=0$,} there is an element $(0,w) \in H^{2}(U) \oplus H^{2}(V)$ which maps to $v$. Denote the image of $(0,w)$ under the map $H^{2}(U) \oplus H^{2}(V) \rightarrow H^{2}(U') \oplus H^{2}(V')$ by $(0,w')$.  Since the above diagram is commutative we have that
\[
(a',-w')=(a',0)-(0,w') \longmapsto v'-v'=0.
\]
But this means that there exists an element $b \in H^{2}(\M_{\varepsilon})$ which maps to $(a',-w')$, which is a contradiction, as $a'$ is not in the image of the map $H^{2}(\M_{\varepsilon}) \rightarrow H^{2}(U')$. Thus, we have shown that  $H^3(\M_0;\R)\not=\mklm{0}$.
\end{example}
As a consequence of this example we obtain 
\begin{cor}\label{cor:15.11.2023}
For  a  general Hamiltonian $S^1$-manifold $M$ with singular symplectic quotient $\M_0$ there is no degree-preserving surjection
\[
H^{*}_{{S^1}}(M) \longrightarrow H^{*}(\M_{0};\mathbb{R}).
\]
\end{cor}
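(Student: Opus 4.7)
The plan is to combine the computation in Example \ref{4spheres} with a standard equivariant formality argument for the specific $M = (S^{2})^{4}$. More precisely, I would exhibit $M := S^{2}\times S^{2}\times S^{2}\times S^{2}$ with the diagonal circle rotation and momentum map $\J$ as in Example \ref{4spheres} as the desired counterexample, by showing that $H^{3}_{{S^{1}}}(M) = 0$ while $H^{3}(\M_{0};\mathbb{R}) \neq 0$. Since any degree-preserving linear map sends the zero space to zero, the existence of a surjection $H^{3}_{{S^{1}}}(M) \to H^{3}(\M_{0};\mathbb{R})$ is impossible, which proves the corollary.

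First I would invoke Example \ref{4spheres}, which already provides the non-vanishing $H^{3}(\M_{0};\mathbb{R}) \neq 0$ via the detailed Mayer--Vietoris analysis carried out there. This is the harder, geometric half of the argument and is already done. Second, I would argue that $H^{*}_{{S^{1}}}(M)$ is concentrated in even degrees. Since $M$ is a compact symplectic manifold with a Hamiltonian $S^{1}$-action, it is equivariantly formal (this is a classical consequence of equivariant Morse theory for the norm-square of the momentum map, or directly of Kirwan's result that Hamiltonian actions on compact symplectic manifolds are equivariantly formal). Therefore, as an $H^{*}(BS^{1})$-module,
\begin{equation*}
H^{*}_{{S^{1}}}(M) \;\cong\; H^{*}(M;\mathbb{R})\otimes_{\mathbb{R}} H^{*}(BS^{1};\mathbb{R}).
\end{equation*}
Now $H^{*}(M;\mathbb{R}) = H^{*}(S^{2};\mathbb{R})^{\otimes 4}$ is concentrated in even degrees, and so is $H^{*}(BS^{1};\mathbb{R}) = \mathbb{R}[x]$ with $\deg x = 2$. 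Hence $H^{\mathrm{odd}}_{{S^{1}}}(M) = 0$, and in particular $H^{3}_{{S^{1}}}(M) = 0$.

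Combining the two steps yields the contradiction: a degree-preserving surjection from $H^{3}_{{S^{1}}}(M) = 0$ onto $H^{3}(\M_{0};\mathbb{R}) \neq 0$ cannot exist. The main obstacle in this argument is really the first step, namely the nontriviality of the odd singular cohomology of $\M_{0}$ established in Example \ref{4spheres}; the equivariant formality step is standard and essentially automatic once one remembers that Hamiltonian circle actions on compact symplectic manifolds are equivariantly formal. Notice, finally, that this is consistent with Corollary \ref{ex:even}: there the surjection was only claimed in \emph{even} degrees under the additional hypothesis that all fixed point components in $\J^{-1}(0)$ have vanishing odd cohomology, a hypothesis trivially fulfilled here for isolated fixed points, and the present corollary shows that this restriction to even degrees is genuinely necessary.
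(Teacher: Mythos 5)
Your proposal is correct and follows essentially the same route as the paper: both invoke Example \ref{4spheres} for $H^{3}(\M_{0};\mathbb{R})\neq 0$ and equivariant formality of the Hamiltonian $S^{1}$-action on $(S^{2})^{4}$ to get $H^{3}_{S^{1}}(M)=0$, whence no degree-preserving surjection can exist. No gaps.
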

\begin{proof}
Consider the situation of Example \ref{4spheres}, where we saw that $H^{3}(\M_{0};\mathbb{R})\neq 0$. On the other hand, the Hamiltonian $S^{1}$-action on $M=S^{2} \times S^{2} \times S^{2} \times S^{2}$  is equivariantly formal, so that 
\[
H^{*}_{S^{1}}(M)=\mathbb{R}[x] \otimes H^{*}(S^{2} \times S^{2} \times S^{2} \times S^{2})
\]
by \cite[Theorem 7.3]{goertscheszoller}.
But the right-hand side is generated in even degrees, and consequently
\[
H^{3}_{S^{1}}(M) =0.
\]
Thus, there cannot be a degree-preserving surjection 
\[
H^{*}_{{S^1}}(M) \longrightarrow H^{*}(\M_{0};\mathbb{R}).
\]
\end{proof}

%\bibliography{bibliography}
%\bibliographystyle{amsplain}

\providecommand{\bysame}{\leavevmode\hbox to3em{\hrulefill}\thinspace}
\providecommand{\MR}{\relax\ifhmode\unskip\space\fi MR }
% \MRhref is called by the amsart/book/proc definition of \MR.
\providecommand{\MRhref}[2]{%
  \href{http://www.ams.org/mathscinet-getitem?mr=#1}{#2}
}
\providecommand{\href}[2]{#2}

\end{document}